\newtheorem{theorem}{Theorem}[section]
\newtheorem{lemma}[theorem]{Lemma}
\newtheorem{corollary}[theorem]{Corollary}
\newtheorem{proposition}[theorem]{Proposition}
\theoremstyle{definition}
\newtheorem{definition}[theorem]{Definition}
\theoremstyle{remark}
\newtheorem{remark}[theorem]{Remark}
\numberwithin{equation}{section}
\newcommand{\N}{{\mathbb N}}
\newcommand{\R}{{\mathbb R}}
\renewcommand{\C}{{\mathbb C}}
\newcommand{\K}{{\mathcal K}}
\newcommand{\M}{\widetilde{M}}
\newcommand{\eps}{{\varepsilon}}
\def\cal#1{\mathcal{#1}}
\renewcommand{\d}{\, {\rm d }}
\newcommand{\loc}{\rm loc}
\renewcommand{\div}{\operatorname{div}}
\newcommand{\curl}{\operatorname{curl}}
\newcommand{\dist}{\operatorname{dist}}
\newcommand{\supp}{\operatorname{supp}}
\renewcommand{\leq}{\leqslant}
\renewcommand{\geq}{\geqslant}
\title[Euler equations in a porous medium]{A homogenized limit for the 2D Euler equations in a perforated domain}
\author[M. Hillairet, C. Lacave \& D. Wu]{Matthieu Hillairet, Christophe Lacave \& Di Wu}
\address[M. Hillairet]{Institut Montpelli\'erain Alexander Grothendieck, CNRS, Univ. Montpellier, France}
\email{matthieu.hillairet@umontpellier.fr}
\address[C. Lacave]{Univ. Grenoble Alpes, CNRS, Institut Fourier, F-38000 Grenoble, France.}
\email{Christophe.Lacave@univ-grenoble-alpes.fr}
\address[D. Wu]{School of Mathematical Sciences, Peking University, Beijing 100871, P.R. China}
\email{wudi@math.pku.edu.cn}
\date{\today}
\begin{document}

\maketitle

\begin{abstract}
We study the motion of an ideal incompressible fluid in a perforated domain. The porous medium is composed of inclusions of size $a$ separated by distances $\tilde d$ and the fluid fills the exterior. We analyse the asymptotic behavior of the fluid when $(a,\tilde d) \to (0,0)$.

If the inclusions are distributed on the unit square, this issue is studied recently when $\frac{\tilde d}a$ tends to zero or infinity, leaving aside the critical case where the volume fraction of the porous medium is below its possible maximal value but non-zero. In this paper, we provide the first result in this regime. In contrast with former results, we obtain an Euler type equation where a homogenized term appears in the elliptic problem relating the velocity and the vorticity. 

Our analysis is based on the so-called method of reflections whose convergence provides novel estimates on the solutions to the div-curl problem which is involved in the 2D-Euler equations.
\end{abstract}



\section{Introduction}

For inviscid fluids in a perforated domain, the only mandatory boundary condition, known as the impermeability condition, is that the normal component of the velocity vanishes. However, the standard tools in the homogenisation framework were developed for the Dirichlet boundary condition. This explains that most papers have focused on viscous fluid models where we can assume the no-slip boundary condition: see \cite{Allaire90a,Allaire90b,LaMa, Mikelic91,Sanchez80,Tartar80} for incompressible Stokes and Navier-Stokes flows and \cite{Diaz99, FeireislLu, Lu, Masmoudi02esaim} for compressible Navier-Stokes systems. Among the exceptions, we mention \cite{Allaire91} where the Navier slip boundary condition is considered, but with a scalar friction function which tends to infinity when the size of the inclusions vanishes.

\medskip

Before the studies of the second author, the only articles which handle inviscid flows \cite{LionsMasmoudi,MikelicPaoli} consider a weakly nonlinear Euler flow through a regular grid (balls of radius $a$, at distance $a$ from one another). Using the notion of two-scale convergence, they recover a limit system which couples a cell problem with the macroscopic one, a sort of Euler-Darcy’s filtration law for the velocity.

For the full Euler equations, when the inclusions are regularly distributed on the unit square, the second author together with Bonnaillie-No\"el and Masmoudi treats the case where the inter-holes distance $\tilde d$ is very large or very small compared to the inclusion size $a$. In the dilute case, i.e. when $\frac{\tilde d}a$ tends to infinity, it is proved in \cite{BLM} that the limit motion is not perturbed by the porous medium, namely, we recover the Euler solution in the whole space. If, on the contrary, $\frac{\tilde d}a\to 0$, the fluid cannot penetrate the porous region, namely, the limit velocity verifies the Euler equations in the exterior of an impermeable square \cite{LM}. Therefore, the critical case where $\frac{\tilde d}a\to \bar{k}>0$ is not covered by the analysis developed in these two previous articles. Our goal here is to provide a first result in this very challenging regime.

\medskip

We give now in full details the problem tackled in this paper. Let $K_{PM}$ be a fixed compact subset of $\R^2$ and $\K$ be a connected and simply-connected compact subset of $[-1,1]^2$ such that $\partial \K$ is a $C^{1,\alpha}$ Jordan curve, for some $\alpha >0$. These two sets are arbitrary but fixed throughout the paper. We assume that the porous medium is contained in $K_{PM}$ and made of tiny holes with the following features:
\begin{itemize}
\item the number of holes is large and denoted by the symbol $N$ ;
\item each hole is of size $a >0$ and shape $\K$:
\begin{equation}\label{domain1}
\K_{\ell}^{a}:= x_{\ell}+ \tfrac a 2 \K,
\end{equation}
where the $N$ points $x_{\ell}$ are placed such that $\K_{\ell}^{a} \subset K_{PM}$ ;
\item the minimum distance between two centers $x_{\ell}$ is larger than $d>0.$
\end{itemize} 
We point out that $d$ denotes here the minimum distance {\it between centers}, but as we consider regimes where $d/a\gg 1$ (meaning that there exists an arbitrary large constant $C$ such that $d/a \geq C$ whatever the number of particles), the results would be the same considering $\tilde d$ the distance {\it between holes}, but it would complicate uselessly the analysis throughout this paper.

The fluid domain $\mathcal{F}_{N}$ is the exterior of these holes. Our purpose is to compute a homogenized system when the indicator function of the porous medium:
\begin{equation}\label{def-mu}
\mu := \sum_{\ell=1}^N \mathds{1}_{B(x_{\ell},a)}
\end{equation}
is close to a limit volume fraction $k$. We restrict to pointwise small volume fractions. Namely $k$ is assumed to belong to the following set:
\begin{equation}\label{compatibility}
\mathcal{FV}(\varepsilon_{0}) :=\{ k\in L^\infty(\R^2), \ \supp(k) \subset K_{PM}, \ \|k \|_{L^{\infty}(\mathbb R^2)} \leq \varepsilon_{0}^2 \}
\end{equation}
where $\varepsilon_{0} >0$ is a parameter which will be fixed later on sufficiently small. Consistently, we restrict to the case where $a/d \leq \varepsilon_0$.
To summarize, the domains considered in this paper satisfy:
\begin{equation} \label{main_assumption} \tag{$A_{\varepsilon_0}$}
\begin{aligned}
& \mathcal{F}_N := \R^2 \setminus \Big(\bigcup_{\ell=1}^N \K_{\ell}^a\Big),
\qquad \mathcal{F}_N^c \subset K_{PM} , \qquad
d=\min_{\ell\neq p} \dist \Big( x_{\ell} , x_{p} \Big) \geq \frac{a}{\varepsilon_{0}}.
\end{aligned}
\end{equation}
To illustrate the conventions above, consider for example the case where the holes $(\mathcal K^a_{\ell})_{\ell=1,\ldots,N}$ are spheres distributed periodically on an orthogonal lattice in the unit square $[0,1]^2.$ In this case, we have
\[
d \sim \dfrac{1}{2\sqrt{N}}.
\]
Our notations and assumptions correspond then to the critical regime:
\begin{itemize}
\item $a = \varepsilon/\sqrt{N}$ with $\varepsilon$ small but non zero,
for the discrete model
\item $k = \pi \varepsilon^2 \mathds{1}_{[0,1]^2}$ for the continuous one.
\end{itemize} 
Still in this periodic framework, previous analysis focused on 
\begin{itemize}
\item the dilute regime \cite{BLM} in which
\[
\lim_{N \to \infty} \sqrt{N} a = 0 \quad \text{ and } k \equiv 0 \ ;
\]
\item the dense regime \cite{LM} in which
\[
\lim_{N \to \infty} \sqrt{N} a = \dfrac{1}{2} \quad \text{ and } k = \dfrac{\pi}{4}\mathds{1}_{[0,1]^2} .
\]
\end{itemize}
So, in this periodic case and in the full generality, volume fractions $k$ verifying \eqref{compatibility} correspond to small data in the critical regime which is not covered by \cite{BLM,LM}. We remark also that the case $k \equiv 0$ previously studied is covered by our analysis.

\medskip

As is standard in the analysis of Euler equations in perforated domains, we divide the study in two steps. The first crucial step is to understand the elliptic problem
 which gives the velocity in terms of the vorticity (the div-curl problem). For instance, the two key properties in \cite{LM} are some estimates for the stationary div-curl problem. Herein also, the important novelty is a refined estimate for this elliptic problem that
we explain in a first part. This information is then plugged into the Euler equations in vorticity form in the second step of the analysis.

\subsection{Main result on the div-curl problem}

Any tangent and divergence free vector field in $\mathcal{F}_N$ can be written as the perpendicular gradient of a stream function $\psi_{N}$. When the vorticity $f$ of this vector-field is bounded, has compact support, and zero circulation is created on the boundaries, this stream function is computed as the unique (up to a constant) $C^1$ function solution to the following elliptic problem:
\begin{equation}\label{def-psiN}
 \Delta \psi_{N}=f \text{ in }\mathcal{F}_N,\ \lim_{|x|\to \infty} \nabla \psi_{N}(x)=0,\ \partial_{\tau} \psi_{N}=0 \text{ on } \partial\mathcal{F}_N,\ \int_{\partial \K_{\ell}^a} \partial_{n} \psi_{N} \d s = 0 \text{ for all } \ell.
\end{equation}
The main purpose of the following theorem is to show that -- in the asymptotic regime under consideration in this paper -- $\psi_{N}$ is close to $\psi_{c}$ the unique (up to a constant) $C^1$ function solving a homogenized problem.
It appears that this homogenized problem depends on a matrix $M_{\K}\in \mathcal{M}_{2}(\R)$ associated to the shape of $\K$, and reads:
\begin{equation}\label{def-psic}
\div \big[ ({\rm I}_{2}+kM_{\K}) \nabla \psi_{c}\big] =f \text{ in }\R^2,\qquad \lim_{|x|\to \infty} \nabla \psi_{c}(x)=0.
\end{equation} 
For instance, if $\K$ is the unit disk, then $M_{\K}=2\rm{I}_{2}$. In Section~\ref{sec_res1}, we show that we can compare the asymptotics of $\psi_{c}$ and $\psi_N$ to the solution of the Laplace problem in $\R^2$ with source term $f$ given by:
\begin{equation*}
\psi_0 (x) = \dfrac{1}{2\pi}\int_{\mathbb R^2} \ln(|x-y|) f(y)\d y,
\end{equation*}
in the sense that the differences $\psi_N(x) - \psi_0(x)$ and $\psi_c(x)- \psi_0(x)$ both converge to a constant when $|x| \to \infty.$ In order to define uniquely $\psi_{N}$ and $\psi_{c}$, we fix the unknown constants by imposing that
\[
\lim_{|x| \to \infty} \psi_{N}(x) - \psi_0(x) = \lim_{|x|\to \infty} \psi_c(x) - \psi_0(x) = 0.
\]
With these conventions, our main result is:
\begin{theorem}\label{main-elliptic}
Let $\varepsilon_{0}>0$ sufficiently small. For any $R_{f}>0$, $M_{f}>0$, $\eta \in (0,1)$, $p\in (1,\infty)$ and $\mathcal{O} \Subset \mathbb R^2$, there exists $C$ such that for any $k \in \mathcal{FV}(\varepsilon_0)$, any $\mathcal{F}_{N}$ verifying \eqref{main_assumption} and any $f$ satisfying ${\rm Supp}(f) \subset B(0,R_{f}) \cap \overline{\mathcal{F}_{N}}$ and $\|f\|_{L^{\infty}(\mathbb R^2)} \leq M_{f}$, the solution $\psi_{N}$ of \eqref{def-psiN} can be split into
 \[
 \psi_{N}=\psi_{c}+\Gamma_{1,N}+\Gamma_{2,N} 
 \] 
where 
 $\Delta \Gamma_{j,N}=0$ in $\R^2 \setminus K_{PM}$ for $j=1,2$ and 
\[ 
 \| \nabla \Gamma_{1,N} \|_{L^2(\mathcal{F}_N\cap \mathcal{O})}+ \| \Gamma_{2,N} \|_{L^2(\mathcal{F}_N\cap \mathcal{O})} 
 \leq 
 C
\left[ \left( \dfrac{a}{d}\right)^{3-\eta} +
\|\mu -k\|^{ \frac{p(1-\eta)}{p+2}}_{W^{-1,p}(\mathbb R^2)}
 + \|\mu -k\|^{\frac12}_{W^{-1,p}(\mathbb R^2)} + \|k\|_{L^{\infty}(\mathbb R^2)}^2
 \right].
 \]
\end{theorem}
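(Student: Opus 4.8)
\emph{Proof plan.}
I would solve the perforated problem \eqref{def-psiN} by the method of reflections and recognise \eqref{def-psic} in the limit. Write $\psi_N=\psi_0+\Phi_N$ and $\psi_c=\psi_0+\Psi_c$, where $\psi_0=G*f$ with $G(x)=\tfrac1{2\pi}\ln|x|$ is smooth, with bounds on any fixed ball depending only on $R_f,M_f$. Since $\supp f\subset\overline{\mathcal F_N}$ and $\supp k\subset K_{PM}$, both correctors $\Phi_N,\Psi_c$ are harmonic in $\R^2\setminus K_{PM}$, decay at infinity, and vanish there by the chosen normalisation, so the quantity to estimate is $\psi_N-\psi_c=\Phi_N-\Psi_c$. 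For each inclusion define the reflection operator $\mathcal Q_\ell$, sending a field $h$ defined near $\K_\ell^a$ to the unique $w$ harmonic in $\R^2\setminus\K_\ell^a$, vanishing at infinity, with zero flux $\int_{\partial\K_\ell^a}\partial_n w\,ds=0$ and $\partial_\tau(h+w)=0$ on $\partial\K_\ell^a$; its solvability and far-field expansion rest on the conformal representation available since $\partial\K$ is $C^{1,\alpha}$. With $w_\ell^{(1)}=\mathcal Q_\ell[\psi_0]$, $w_\ell^{(j)}=\mathcal Q_\ell[\sum_{p\ne\ell}w_p^{(j-1)}]$ for $j\ge2$, and $\Phi_N^{(j)}=\sum_\ell w_\ell^{(j)}$, a telescoping of $\partial_\tau$ on each $\partial\K_q^a$ shows that the boundary residual after $m$ rounds equals $\partial_\tau\sum_{p\ne q}w_p^{(m)}$; hence $\Phi_N=\sum_{j\ge1}\Phi_N^{(j)}$ provided the series converges.

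The core of the argument — and, I expect, the main obstacle — is the convergence of $\sum_j\Phi_N^{(j)}$ with a geometric rate uniform in $N$ and in the configuration. A crude pointwise iteration runs into divergent lattice sums $\sum_{p\ne\ell}|x_\ell-x_p|^{-s}$, so one must argue in $\dot H^1$ and prove that the reflection step has operator norm $\le C\varepsilon_0<1$ there. The ingredients are: the interior estimate $\|\nabla u\|_{L^\infty(\K_\ell^a)}\lesssim d^{-1}\|\nabla u\|_{L^2(B(x_\ell,d/2))}$ for $u$ harmonic on $B(x_\ell,d/2)$, which applies to the incident fields $u=\sum_{p\ne\ell}w_p^{(j-1)}$ because the centres are $d$-separated; the single-hole bound $\|\mathcal Q_\ell[u]\|_{\dot H^1}\lesssim a\,\|\nabla u\|_{L^\infty(\K_\ell^a)}$, which, the balls $B(x_\ell,d/2)$ being disjoint, gives $\sum_\ell\|\mathcal Q_\ell[u]\|_{\dot H^1}^2\lesssim(a/d)^2\|\nabla u\|_{L^2(\R^2)}^2$; and a bound on the cross terms $\langle\nabla\mathcal Q_p[u],\nabla\mathcal Q_q[u]\rangle$, obtained by integrating by parts on $\partial\K_q^a$, subtracting the boundary mean (allowed by the zero-flux condition) and using the dipolar decay of $\mathcal Q_p[u]$. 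The residual, at worst logarithmically divergent, lattice sums are what force the arbitrarily small exponent losses in the statement; the $j=1$ term, driven by the smooth $\psi_0$, is bounded directly. This is precisely the ``novel estimate on the div-curl problem'' of the abstract: here $\varepsilon_0$ has to be taken small and \eqref{main_assumption} is used.

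Granting convergence, one identifies the limit of the first reflection and controls the rest. Taylor-expanding $\psi_0$ on $\K_\ell^a$ and using the far-field expansion of $\mathcal Q_\ell$ yields $w_\ell^{(1)}=D_\ell+r_\ell$, with $D_\ell(x)=\gamma\,a^2\,\nabla G(x-x_\ell)\cdot M_\K\nabla\psi_0(x_\ell)$ the planar dipole driven by the incident gradient — $M_\K$ the polarisation matrix of the cell problem on $\K$ appearing in \eqref{def-psic}, $\gamma$ an explicit constant calibrated so that $\gamma\,a^2\sum_\ell\delta_{x_\ell}$ matches $-k$ in the limit — and $r_\ell$ a higher-order multipole, so $\sum_\ell r_\ell$ is $\dot H^1$-small after summation. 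One then passes from $\sum_\ell D_\ell=\gamma\sum_\ell\nabla G(x-x_\ell)\cdot(a^2 M_\K\nabla\psi_0(x_\ell))$ to $-\nabla G*(kM_\K\nabla\psi_c)=\Psi_c$ in three steps: replacing $\sum_\ell a^2\delta_{x_\ell}$ by $\gamma^{-1}$ times a mollified density — a smoothing error paired against $\nabla^2 G$ near the diagonal and $\nabla G$ away from it, yielding the $(a/d)^{3-\eta}$ term; replacing $\mu$ by $k$ — pairing $\mu-k$ against $\nabla G$ and $\nabla^2 G$ after a near/far splitting and interpolation in the splitting radius, yielding the $\|\mu-k\|_{W^{-1,p}}^{p(1-\eta)/(p+2)}$ and $\|\mu-k\|_{W^{-1,p}}^{1/2}$ terms; and replacing $\nabla\psi_0$ by $\nabla\psi_c$, using $\|\nabla(\psi_c-\psi_0)\|\lesssim\|k\|_{L^\infty}$ fed once more through $\nabla G*(\cdot)$, yielding the $\|k\|_{L^\infty}^2$ term. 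The higher reflections $\sum_{j\ge2}\Phi_N^{(j)}$, bounded by the tail of the geometric series, give the next homogenised correction and, together with the corresponding term of $\Psi_c$, are absorbed into $\|k\|_{L^\infty}^2$.

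Finally, the splitting $\psi_N-\psi_c=\Gamma_{1,N}+\Gamma_{2,N}$ records that the error is small in different norms in different regions: the microscopically oscillatory part — the macroscopic discrepancy $\sum_\ell D_\ell-\Psi_c$ and the portion of the $\mu\mapsto k$ error that does not live in the energy space, which are of size $\sim a/d$ in $\dot H^1$ but of the stated (small) size in $L^2$ — is collected into $\Gamma_{2,N}$, controlled in $L^2$, while the genuinely energy-small contributions ($\sum_\ell r_\ell$, the far-field terms, the higher reflections) are collected into $\Gamma_{1,N}$, controlled in $\dot H^1$. Each summand is harmonic in $\R^2\setminus K_{PM}$, hence so are $\Gamma_{1,N}$ and $\Gamma_{2,N}$, which gives the asserted decomposition.
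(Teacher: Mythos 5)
Your architecture tracks the paper's quite closely: method of reflections, identification of the leading dipole density, passage from $\mu$ to $k$ and from $\psi_0$ to $\psi_c$, and the split into an energy-small harmonic corrector and an $L^2$-small one. But the step you yourself flag as ``the main obstacle'' — a geometric contraction for the reflection iteration, uniform in $N$ and in the configuration — is left at a sketch that does not close, and the way you propose to absorb the difficulty is not correct.

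Concretely, your contraction estimate hinges on the cross terms $\langle\nabla\mathcal Q_p[u],\nabla\mathcal Q_q[u]\rangle$. Following your own sketch (integrate by parts on $\partial\K_q^a$, subtract the mean using zero flux, use the dipolar decay of $\mathcal Q_p[u]$), each cross term is of order $a^4\,|x_p-x_q|^{-2}\,b_p b_q$ with $b_\ell\simeq\|\nabla u\|_{L^\infty(\K_\ell^a)}\lesssim d^{-1}\|\nabla u\|_{L^2(B(x_\ell,d/2))}$. Summing $p\ne q$ and applying a discrete Schur/Cauchy--Schwarz test, the bilinear form with kernel $|x_p-x_q|^{-2}$ has operator norm $\sup_p\sum_{q\ne p}|x_p-x_q|^{-2}\gtrsim d^{-2}|\ln d|$, since $|x|^{-2}$ is borderline in $\R^2$. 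The resulting bound on the cross-term sum is of size $(a/d)^4|\ln d|\|\nabla u\|_{L^2}^2$, which is \emph{not} controlled by $\varepsilon_0$ uniformly as $d\to 0$ (i.e.\ $N\to\infty$) with $a/d=\varepsilon_0$ fixed. So ``at worst logarithmically divergent lattice sums force the small exponent losses'' is not an accurate description: a raw $|\ln d|$ factor at this stage destroys the theorem, and the $\eta$-losses in the statement have a different origin (log-Lipschitz estimates for $\nabla\psi_0$ and factors $a|\ln a|$, which enter where $a$ itself is small and controlled by $\|\mu-k\|_{W^{-1,p}}$).

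The paper's resolution of exactly this point is a genuinely different mechanism which your proposal does not supply. It works with \emph{linearized} reflections $V^a[A]$ parametrized by boundary-data vectors $A_\ell^{(n)}$ (not the exact single-obstacle solution $\mathcal Q_\ell$), so that the iteration is a discrete linear map on $(\R^2)^N$. Lemma~\ref{Aapp} then bounds its $\ell^q\to\ell^q$ norm by $C(a/d)^{2/q'}$ \emph{without} any logarithm by (i) using the mean-value property of the harmonic dipole on the disjoint balls $B(x_\lambda,d/2)$ and $B(x_\ell,a)$ to convert the discrete sum into a truncated integral against a piecewise-constant density $\Phi^{(n)}$, and (ii) invoking Calder\'on--Zygmund $L^q$-boundedness of the truncated kernel $\mathds 1_{|x-y|\ge d/2}\,D\big((x-y)/|x-y|^2\big)$ on that density, plus a separate Young estimate for the annulus mismatch. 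That CZ step is what eliminates the $|\ln d|$. Without an analogue of it, your $\dot H^1$ contraction does not hold uniformly in $N$.

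A further structural difference worth noting: because the paper corrects only the linear part of the boundary trace at each step, the reflection series does \emph{not} converge to $\psi_N$; there is an irreducible residual of order $a+(a/d)^4$ (Proposition~\ref{1st term}), and the argument deliberately stops at $n=3$ and then controls $a$ by $\|\mu-k\|_{W^{-1,p}}$ (Lemma~\ref{lem_def_eps_s}). Your exact reflection $\mathcal Q_\ell$ would, in principle, make $\sum_j\Phi_N^{(j)}$ converge to $\psi_N$, but at the cost of losing the finite-dimensional parametrization that makes the CZ-type contraction estimate tractable. So the two designs are not interchangeable: the linearization is not an incidental simplification but what makes the uniform-in-$N$ contraction provable.
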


 \begin{remark}\label{rem-elliptic}
We emphasize that this theorem implies that $\psi_{c}$ is a first-order approximation of $\psi_N$ in terms of the porous-medium volume-fraction. Given $k \in \mathcal{FV}(\varepsilon_0)$ the maximal porous-medium volume-fraction is related to $\|k\|_{L^{\infty}(\mathbb R^2)}$ while for the discrete counterpart, {\em i.e.} a fluid domain $\mathcal{F}_{N}$ verifying \eqref{main_assumption}, it is related to $(a/d)^{2}.$ Consequently, the remainder term:
 \[
\left( \dfrac{a}{d}\right)^{3-\eta} + \|k\|_{L^{\infty}(\mathbb R^2)}^2
 \]
 is superlinear in terms of $\varepsilon^2$, where
 \[
 \varepsilon:= \sqrt{\left( \dfrac{a}{d}\right)^{2} + \|k\|_{L^{\infty}(\mathbb R^2)}},
 \]
leaving the possibility to compare the first-order expansions of $\psi_c$ and $\psi_N$. We note also that the error term $\|\mu -k\|_{W^{-1,p}(\mathbb R^2)}$ corresponds to the replacement of a discrete problem by a continuous one and can be chosen arbitrary small for $N$ large enough and well-placed $(x_{\ell})_{\ell=1,\ldots,N}$. 
 
 We emphasize also that, via standard energy estimates, $\psi_{c}$ is indeed the leading term of the expansion because $
 \psi_{c} = \mathcal{O}(1)$ (see also Section~\ref{sec_expansion_Psic}).
The candidate $\psi_c$ is a better approximation than the solution $\psi_{0}$ of the elliptic problem without any influence of the porous medium:
\begin{equation} \label{def-psi0}
 \Delta \psi_{0} =f \text{ in }\R^2,\qquad \lim_{|x|\to \infty} \nabla \psi_{0}(x)=0.
\end{equation}
Indeed, writing $\Delta (\psi_{0}-\psi_{c})= \div (k M_{\K} \nabla \psi_{c})$ and performing standard energy estimates, we obtain that $\psi_{0}-\psi_{c}= \mathcal{O}(\varepsilon^2)$ hence
\[
\psi_{N} - \psi_{0} = \mathcal{O}(\varepsilon^2)\gg \psi_{N} - \psi_{c}.
\]
It is also a much better approximation than the solutions $\psi_{S}$ in the exterior of the impermeable square:
 \[
 \Delta \psi_{S}=f \text{ in } \R^2\setminus K_{PM},\ \lim_{|x|\to \infty} \nabla \psi_{S}(x)=0,\ \partial_{\tau} \psi_{S}=0 \text{ on } \partial K_{PM},\ \int_{\partial K_{PM} } \partial_{n} \psi_{S} \d s = 0 .
 \]
Indeed, we also have in this case $\psi_{0}-\psi_{S}= \mathcal{O}(1)$ hence
 \[
\psi_{N} - \psi_{S} = \mathcal{O}(1)\gg \psi_{N} - \psi_{c}.
\]
\end{remark}

\medskip

The starting point of the proof of this theorem consists in rewriting the elliptic problem \eqref{def-psiN} into
\begin{equation*}
 \Delta \psi_{N}=f \text{ in }\mathcal{F}_N,\ \lim_{|x|\to \infty} \nabla \psi_{N}(x)=0,\ \psi_{N}=\psi_{N,\ell}^* \text{ on } \partial \K_{\ell}^a,\ \int_{\partial \K_{\ell}^a} \partial_{n} \psi_{N} \d s = 0 \text{ for all } \ell,
\end{equation*}
where $(\psi_{N,\ell}^{*})_{\ell=1,\ldots,N}$ are $N$ unknown constants (note that this family of real numbers is also defined up to an additive constant). These constants can be seen as the Lagrange multipliers of the next flux-free condition. A first candidate to approximate $\psi_N$ is naturally $\psi_0.$ This candidate matches the pde in the fluid domain $\mathcal F_N,$ boundary condition at infinity, and flux conditions on the holes, but not the boundary condition on $\partial \mathcal F_N.$ So, we add a corrector to $\psi_0$ which cancels the non-constant part of $\psi_0$ on the $\K_{\ell}^{a}$. This corrector is computed by summing solutions to cell problems around each of the holes $\K_{\ell}^{a}$ as if it was alone.
Taking into account that the holes are small, we could choose as model cell problem the following one (where $\K^a:=a\K$): 
\begin{equation*}
 \Delta \psi =0 \text{ in }\mathbb R^2 \setminus \K^a ,\quad \psi(x)= A\cdot x + \psi^* \text{ on } \partial \K^a, \quad \lim_{|x|\to \infty} \psi(x)=0,
\end{equation*}
with $A \in \mathbb R^2$ a data representing the forcing by $\psi_0$ on the boundaries and $\psi^*$ an unknown constant. Up to a shift in space, we show in Section~\ref{sec_res1} that we can alternatively choose:
\begin{equation*}
 \Delta \psi =0 \text{ in } \mathbb R^2 \setminus \K^a, \quad \psi(x)=A \cdot x \text{ on } \partial \K^a,
 \quad \lim_{|x| \to \infty} \psi(x) = 0.
\end{equation*}
 Obviously, the solutions to these elementary problems do not take into account the other holes. So summing such solutions translated around the $(\K^a_{\ell})_{\ell=1,\ldots,N}$ we create again an error term in the boundary conditions on the holes. The strategy that we implement here is to introduce an iteration process in which we correct after each step the new error in the boundary conditions on the holes. 
This method is known as the "method of reflections" and has been widely studied in the context of elliptic problems (see \cite{LaurentLegendreSalomon,Hofer-Velazquez} for instance and \cite{Schubert} in the situation studied herein). It is recently adapted to the Stokes equations to study the effective viscosity problem by the first and last authors \cite{HillairetWu} (see also \cite{Niethammer-Schubert}).

\medskip

We point out that our elliptic problem \eqref{def-psiN} is also related to the perfect conductivity problem, namely when the conductivity tends to infinity (see the Appendix of \cite{BaoLiYin} for the link). In this context, there are many results in homogenization, and we refer to the recent paper by Bonnetier, Dapogny and Triki \cite{Bonnetier} for an overview of the literature. However, we did not find a result of the form of Theorem~\ref{main-elliptic}. In \cite{Gloria}, the author analyzes our elliptic problem \eqref{def-psiN} also by seeing it as a scalar version of a sedimentation problem. Asymptotics of the solution are obtained
in dimension $d >2$ when the positions of the particles (corresponding to the holes in our case) are given by a suitable hardcore point process. 
We guess that Theorem~\ref{main-elliptic} has its own interest and could be used in various problems (for instance in solid mechanics or electromagnetism). But, we restrict now to an application for the study of fluid motions.

\subsection{Application to the 2D Euler flows}

Even if the Euler equations is the oldest PDE, the study of this system is still a very active area of research, in mathematics as well as in engineering and physics, because it describes well the motion of incompressible fluids for a wide range of Reynolds numbers. In dimension two, the standard velocity formulation is equivalent to the vorticity formulation which reads
\begin{equation}\label{EulerN}
 \left\{
\begin{array}{ll}
 \partial_{t} \omega_{N} + u_{N}\cdot \nabla \omega_N=0, &\text{in }[0,T]\times \mathcal{F}_{N},\\
 \div u_{N} =0, \quad \curl u_{N} =\omega_{N}, &\text{in }[0,T]\times \mathcal{F}_{N},\\
\displaystyle u_{N}\cdot n\vert_{ \partial\mathcal{F}_{N}} =0, \quad \lim_{|x|\to \infty}u_{N}(\cdot,x)=0, &\text{on }[0,T],\\
\displaystyle\int_{\partial \K_{\ell}^a} u_{N} \cdot \tau \d s = 0, & \text{on }[0,T], \text{ for all } \ell=1,\dots, N,\\
 \omega_{N}(0, \cdot)=\omega_{0}, &\text{in } \mathcal{F}_{N},
\end{array}
 \right.
\end{equation}
with $\curl u_{N} = \partial_{1} u_{N,2}-\partial_{2} u_{N,1}$.
One of the main feature of \eqref{EulerN} is that it reduces to a transport equation
for the vorticity by the divergence free velocity $u_{N}$, where $u_{N}$ is computed from $\omega_{N}$ through a div-curl problem. 
The global well-posedeness of this equation -- in such exterior domains 
and $C^1_{c} (\R^2)$ initial data -- is established from a long time ago by Kikuchi \cite{Kikuchi} (see the textbook \cite{MajdaBertozzi} for more references).

The div-curl problem can be recast in terms of the stream function $\psi_{N}$ which is then the unique (up to a constant) solution of \eqref{def-psiN} with $f=\omega_{N}$. As in many papers on the 2D-Euler equations, once the properties of the operator which gives $u_{N}$ in terms of $\omega_{N}$ are analyzed -- which is exactly the purpose of Theorem~\ref{main-elliptic} -- one proves that $(\omega_{N},u_{N})$ is close to the solution $(\omega_{c},u_{c})$ of the following modified Euler system:
\begin{equation}\label{Eulerc}
 \left\{
\begin{array}{ll}
 \partial_{t} \omega_{c} + u_{c}\cdot \nabla \omega_{c}=0, &\text{in }[0,T]\times \R^2,\\
 \div u_{c} =0,\quad \curl(({\rm I}_{2}+k\widehat{M_{\K}}) u_{c}) =\omega_{c} &\text{in }[0,T]\times \R^2,\\
 \displaystyle \lim_{|x|\to \infty}u_{c}(\cdot,x)=0 &\text{on }[0,T],\\
 \omega_{c}(0, \cdot)=\omega_{0}, &\text{in } \R^2,
\end{array}
 \right.
\end{equation}
where $\widehat{M_{\K}}$ is defined in terms of $M_{\K}=(m_{i,j})_{i,j=1,2}$ as follows:
\[
\widehat{M_{\K}}:=
\begin{pmatrix}
 m_{22} & -m_{21}\\
 -m_{12} & m_{11}
\end{pmatrix}.
\]
The homogenized system \eqref{Eulerc} is also a transport equation for the vorticity $\omega_c$ by the divergence free vector field $u_c$, but $u_c$ is now related to $\omega_{c}$ through a modified div-curl problem. This new system is reminiscent of \eqref{def-psic} with $f=\omega_{c}$. Indeed, since $u_c$ is divergence-free, it reads again $u_{c}=\nabla^\perp \psi_{c}=(-\partial_{2}\psi_{c} , \partial_{1} \psi_{c})^T$ which implies that
\[
\curl(({\rm I}_{2}+k\widehat{M_{\K}}) \nabla^\perp \psi_{c})=\div(({\rm I}_{2}+kM_{\K}) \nabla \psi_{c}).
\]

\medskip

Our main result concerning the Euler equations splits in two parts: a well-posedness result for \eqref{Eulerc} and a stability estimate between the solution to \eqref{Eulerc} and the solution to the initial Euler problem in a perforated domain.

\begin{theorem}\label{main-Euler}
Let $\omega_{0}\in C^1_{c}(\R^2)$ such that $\supp \omega_{0} \Subset \R^2 \setminus K_{PM}$ and let $\delta \in (0,\dist (\supp\omega_{0},K_{PM}))$. There exists $\varepsilon_{0}>0$ such that the following holds true:

\begin{enumerate}
\item For any $k \in \mathcal{FV}(\varepsilon_0)$
there exists $T_{k}\in (0,+\infty]$ and a unique $\omega_{c}\in C^1([0,T_{k}]\times \R^2)$ solution to \eqref{Eulerc} such that $\dist(\supp \omega_{c}(t,\cdot),K_{PM})\geq \delta$ for any $t\in [0,T_{k}]$.

\item For any $T\leq T_{k}$, $\eta \in (0,1)$, $p\in (1,2)$, there exists $C(T,\eta,p)$ such that, for any $\mathcal{F}_{N}$ verifying \eqref{main_assumption}, the unique solution of \eqref{EulerN} with initial datum $\omega_{0}$ satisfies for all $t\in [0,T]$:
\begin{equation*}
\|(\omega_N-\omega_c)(t,\cdot)\|_{L^{\infty}(\R^2)}\leq 
C(T,\eta,p)
\left[ \left( \dfrac{a}{d}\right)^{3-\eta} +
\|\mu -k\|^{ \frac{p(1-\eta)}{p+2}}_{W^{-1,p}(\mathbb R^2)}
 + \|\mu -k\|^{\frac12}_{W^{-1,p}(\mathbb R^2)} + \|k\|_{L^{\infty}(\mathbb R^2)}^2
 \right].
\end{equation*}
Moreover, for any bounded open set $\mathcal{O} \Subset \R^2\setminus K_{PM}$, there exists $C(\mathcal{O},T,\eta,p)>0$ such that for all $t\in [0,T]$:
\begin{equation*}
\|(u_N-u_c)(t,\cdot)\|_{L^\infty(\mathcal{O})}\leq C(\mathcal{O},T,\eta,p)
\left[ \left( \dfrac{a}{d}\right)^{3-\eta} +
\|\mu -k\|^{ \frac{p(1-\eta)}{p+2}}_{W^{-1,p}(\mathbb R^2)}
 + \|\mu -k\|^{\frac12}_{W^{-1,p}(\mathbb R^2)} + \|k\|_{L^{\infty}(\mathbb R^2)}^2
 \right].
 \end{equation*}
\end{enumerate}
 \end{theorem}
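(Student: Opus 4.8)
The plan is to follow the standard two-step strategy for Euler equations in perforated (or otherwise modified) domains, using Theorem~\ref{main-elliptic} as the black box for the elliptic estimate.

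\medskip

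\emph{Step 1 (Well-posedness of the homogenized system \eqref{Eulerc}).} First I would observe that, since $u_c = \nabla^\perp \psi_c$ with $\psi_c$ solving \eqref{def-psic} with $f = \omega_c$, the map $\omega_c \mapsto u_c$ is, away from $K_{PM}$, a smoothing operator with good Hölder/Sobolev bounds: because $\|k\|_{L^\infty} \leq \varepsilon_0^2$ is small, the operator $\div[({\rm I}_2 + kM_\K)\nabla \cdot]$ is uniformly elliptic, so $\psi_c - \psi_0$ is controlled in $H^1$ by $\varepsilon_0^2\|\nabla\psi_0\|_{L^2}$, and elliptic regularity plus the explicit kernel for $\psi_0$ give that $u_c$ is log-Lipschitz on all of $\R^2$ (and genuinely $C^{1,\alpha}$ away from $\supp k$, where the coefficients are smooth). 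This log-Lipschitz bound is exactly what is needed to run the Yudovich/flow-map argument: one sets up the characteristic flow $\Phi_c(t,\cdot)$ of $u_c$, transports $\omega_0$ along it, and closes a fixed point on a time interval $[0,T_k]$. The separation $\dist(\supp\omega_0, K_{PM}) \geq \delta$ is propagated as long as the flow does not move points by more than $\delta$; since $u_c$ is bounded on the relevant region this holds on a (possibly maximal) interval $[0,T_k]$, and on that interval the coefficient field $k$ is never ``seen'' by the vorticity, so $C^1$ regularity of $\omega_c$ is preserved. Uniqueness follows from the log-Lipschitz estimate by a Grönwall argument on $\|\Phi_c^1 - \Phi_c^2\|$. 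This part is essentially classical (cf.\ Yudovich theory, the treatment in \cite{MajdaBertozzi}) once the regularity of the modified Biot–Savart operator is in hand.

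\medskip

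\emph{Step 2 (Stability estimate).} Let $\mathcal{E}(t) := \|(\omega_N - \omega_c)(t,\cdot)\|_{L^\infty}$ and call $R$ the right-hand side bound in the statement. Both $\omega_N$ and $\omega_c$ satisfy transport equations, the first by $u_N$ and the second by $u_c$; I would compare them via their characteristic flows $\Phi_N, \Phi_c$. Because the initial vorticity is supported at distance $\geq \delta$ from $K_{PM}$ and both flows are (for $\varepsilon_0$ small, uniformly in $N$) close to the unperturbed Euler flow on a short time, one shows $\supp\omega_N(t,\cdot)$ and $\supp\omega_c(t,\cdot)$ stay in a fixed compact set $\mathcal{O} \Subset \R^2\setminus K_{PM}$ for $t \leq T$; in particular $\omega_N$ never interacts with the holes, which is what makes the comparison possible. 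Then, by the standard argument,
\[
\mathcal{E}(t) \leq \|\omega_0\|_{C^1}\,\|\Phi_N(t,\cdot) - \Phi_c(t,\cdot)\|_{L^\infty(\text{supp})},
\]
and differentiating in time the flow difference gives
\[
\frac{d}{dt}\|\Phi_N - \Phi_c\| \leq \|u_N - u_c\|_{L^\infty(\mathcal{O})} + \|\nabla u_c\|_{L^\infty}\,\|\Phi_N - \Phi_c\|.
\]
The second term is handled by Grönwall using the log-Lipschitz bound on $u_c$ from Step~1 (here one must be slightly careful, inserting the usual $x\log x$ modulus of continuity and using that the argument stays small). The first term, $\|u_N - u_c\|_{L^\infty(\mathcal{O})}$, is the crux: writing $u_N - u_c = \nabla^\perp(\psi_N - \psi_c) = \nabla^\perp(\Gamma_{1,N} + \Gamma_{2,N})$ with $f = \omega_N$, Theorem~\ref{main-elliptic} controls $\|\nabla\Gamma_{1,N}\|_{L^2} + \|\Gamma_{2,N}\|_{L^2}$ by $R$ (with constants depending on $R_f, M_f$ which are uniformly bounded along the evolution since $\|\omega_N(t)\|_{L^\infty} = \|\omega_0\|_{L^\infty}$ and the support stays in a fixed ball). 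Since $\Gamma_{1,N}$ and $\Gamma_{2,N}$ are harmonic outside $K_{PM}$, interior elliptic estimates upgrade the $L^2$ control on $\mathcal{F}_N\cap\mathcal{O}'$ (for a slightly larger $\mathcal{O}' \Subset \R^2\setminus K_{PM}$) to an $L^\infty$ control of $\nabla\Gamma_{1,N}$ and of $\nabla\Gamma_{2,N}$ on $\mathcal{O}$, hence $\|u_N - u_c\|_{L^\infty(\mathcal{O})} \lesssim R + \mathcal{E}(t)$ — the extra $\mathcal{E}(t)$ coming from the fact that applying Theorem~\ref{main-elliptic} with source $\omega_N$ rather than $\omega_c$ requires comparing $\psi_c[\omega_N]$ to $\psi_c[\omega_c]$, which is an $O(\mathcal{E}(t))$ perturbation by uniform ellipticity. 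Combining, $\frac{d}{dt}\|\Phi_N - \Phi_c\| \lesssim R + \|\Phi_N - \Phi_c\|$, and Grönwall yields $\|\Phi_N - \Phi_c\|(t) \leq C(T) R$, hence the stated bound on $\mathcal{E}(t)$; feeding this back into the elliptic estimate gives the velocity bound on $\mathcal{O}$.

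\medskip

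\emph{Main obstacle.} The genuinely delicate point is not the Grönwall machinery but the transfer of the $L^2$-type estimate of Theorem~\ref{main-elliptic} into a pointwise (or at least $L^\infty$-in-space) bound on $u_N - u_c$ over a region where the vorticity lives, \emph{uniformly in $N$} and along the whole time interval. This requires: (i) checking that the hypotheses $\supp f \subset B(0,R_f)\cap\overline{\mathcal{F}_N}$, $\|f\|_{L^\infty}\leq M_f$ of Theorem~\ref{main-elliptic} hold with $N$- and $t$-independent constants, which follows from conservation of $\|\omega_N\|_{L^p}$ norms and an a priori confinement of the support; (ii) a careful use of the harmonicity of $\Gamma_{j,N}$ outside $K_{PM}$ together with mean-value / Caccioppoli estimates on annuli between $\supp\omega_N$ and $K_{PM}$ to go from $L^2$ to $L^\infty$ without losing the rate $R$; and (iii) ensuring the confinement argument for $\supp\omega_N(t)$ is itself uniform in $N$ — this is where one needs the preliminary fact that $u_N$ is uniformly bounded near $\supp\omega_N$, which again comes out of Theorem~\ref{main-elliptic} (comparing $u_N$ to $u_c$, which is bounded). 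Once these uniformities are secured the rest is a routine Grönwall/fixed-point argument.
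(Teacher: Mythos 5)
Your proposal follows essentially the same route as the paper: (i) well-posedness of \eqref{Eulerc} by building the flow of $u_c=\nabla^\perp\psi_c$ on a time interval where $\supp\omega_c$ stays at distance $\geq\delta$ from $K_{PM}$, using that $\psi_c-\psi_0$ is harmonic there; (ii) stability by comparing characteristics, with Theorem~\ref{main-elliptic} upgraded to a $W^{1,\infty}$ bound on an annulus via harmonicity of $\Gamma_{j,N}$ outside $K_{PM}$ (this is exactly the paper's Corollary~3.8), and a bootstrap to keep $\supp\omega_N$ confined uniformly in $N$.

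Two places where you overstate or gloss. First, you assert that $u_c$ is log-Lipschitz on all of $\mathbb R^2$ ``by elliptic regularity''. Inside $\supp k$ the operator $\div[({\rm I}_2+kM_\K)\nabla\cdot]$ has only bounded measurable coefficients; the $H^1$ smallness of $\psi_c-\psi_0$ gives no pointwise modulus of continuity of $\nabla\psi_c$ there, and obtaining one would require De~Giorgi--Nash--Moser or the Auscher--Tchamitchian machinery that the paper explicitly says it is avoiding. Fortunately you never use this global claim: what your argument (and the paper's Lemma~3.6) actually needs is that $\nabla\psi_c$, and then $\nabla^2\psi_c$, are controlled only on $\mathcal F_\delta$, where $\psi_c-\psi_0$ is harmonic. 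You should drop the global statement to avoid relying on a false or unavailable fact. Second, the inequality $\mathcal E(t)\lesssim\|\omega_0\|_{C^1}\,\|\Phi_N-\Phi_c\|_{L^\infty}$ and the Gr\"onwall you run both hide an a priori bound on $\|\nabla\omega_N(t,\cdot)\|_{L^\infty}$ (or equivalently on the Jacobian of $\Phi_N$) that does not come for free: it is obtained by a separate Gr\"onwall argument using a logarithmic bound on $\|\nabla u_N\|_{L^\infty(\mathcal O_T)}$, itself a consequence of the second estimate of Lemma~3.6 transferred to $u_N$ via the $W^{1,\infty}$ elliptic estimate. The paper devotes a short subsection to exactly this; calling it ``routine'' understates a step that is genuinely part of closing the estimate.

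Subject to these two repairs, the outline is sound and matches the paper's argument: elliptic $L^2$ estimate $\to$ local $W^{1,\infty}$ by harmonicity outside $K_{PM}$ $\to$ velocity stability modulo vorticity stability $\to$ uniform $C^1$ bound on $\omega_N$ $\to$ Gr\"onwall for vorticity/trajectory stability $\to$ bootstrap on the confinement time $T_N$.
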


Of course, if $T_{k}<+\infty$, we should choose $T=T_{k}$ in the second statement. During the proof, we compute also a stability estimate between the flow maps which correspond respectively to $u_{c}$ and $u_{N}$. To avoid additional definitions, we do not include this result in the statement of our main theorem. 
We mention here that this latter stability result follows mainly from the bootstrap argument developed by the second author together with Ars\'enio and Dormy \cite{ADL}. It is based on lipschitz estimates for $u_{c}-u_{N}$. Such a $W^{2,\infty}$ estimate for the stream functions is well beyond the content of Theorem~\ref{main-elliptic}. This reason motivates that we only get estimates on $[0,T_{k}]$, {\em i.e.} before that the homogenized vorticity reaches $\supp k$. Even if there is no vorticity in the vicinity of the porous medium, we recall that the velocity $u_{c}$ is highly affected by $k$ through a non-local operator. In particular, Remark~\ref{rem-elliptic} can be adapted here to state that the solution of the Euler equations in the whole plane or outside $K_{PM}$ is a worse approximation of $(\omega_{N},u_{N})$ than $(\omega_{c},u_{c})$.

We note that the above result is by nature slightly different from the usual results on the asymptotic behavior in perforated domains (as in the articles listed in the introduction and the references therein). Often, the justification that a homogenization problem is a good approximation reads as a weak or strong compactness theorem as $N\to \infty$, and in general, $\dot H^1_{\loc}$ estimates (like for $\Gamma_{1,N}$) is enough to have a global compactness result without assumption on the support of the vorticity. Unfortunately, we cannot ensure that every right-hand side terms in Theorems~\ref{main-elliptic} and \ref{main-Euler} tends to zero for $\mu\rightharpoonup k$. Here, our justification reads as the identification of the leading term with respect to powers of $\varepsilon$. Nevertheless, in classical literature, some weak topologies do not allow to give a precise estimate of the error between $\omega_{c}$ and $\omega_{N}$, hence we think that such a statement is interesting at the practical point of view.

\medskip

The remainder of this paper is composed of two parts. The following section deals with the elliptic estimates, namely proves Theorem~\ref{main-elliptic}. The application to the 2D-Euler equations is performed in Section~\ref{sec-Euler}.

\subsection*{Notations}

Below, we use standard notations for lebesgue/sobolev spaces. 
We also denote by $\dot{W}^{1,p}(\mathbb R^2)$ the classical homogeneous sobolev spaces. We shall also use $\dot{H}^1(\mathbb R^2)$ for $\dot{W}^{1,2}(\mathbb R^2).$

\section{Elliptic estimate} \label{sec_res1}

In the whole section, $R_{f}>0$, $M_{f}>0$ and $\varepsilon_0 >0$ are fixed. We fix also a homogenized volume fraction $k\in \mathcal{FV}(\varepsilon_0)$ and a porous medium $\mathcal{F}_{N}$ verifying \eqref{main_assumption}. 
We look for the restrictions on $\varepsilon_0$ such that Theorem~\ref{main-elliptic} holds true. To this end we fix again a source term $f$ so that 
${\rm Supp}(f) \subset B(0,R_{f})\cap \overline{\mathcal{F}_{N}}$ 
and $\|f\|_{L^{\infty}(\mathbb R^2)} \leq M_{f}$. We emphasize that, in this section, all constants can depend implicitly on $K_{PM}$ and $\K$, namely $C(q,\varepsilon_{0},R_{f})=C(q,\varepsilon_{0},R_{f}, K_{PM}, \K)$.

\medskip

We split the proof of Theorem~\ref{main-elliptic} into three parts. In the first part, we focus on the problem in the perforated domain \eqref{def-psiN}. We recall existence/uniqueness properties for this problem and provide an approximation of the solution via the method of reflections. In the second part, we focus on the homogenized problem \eqref{def-psic}. We again consider the well-posedness issue for this problem and provide an expansion of the solution with respect to the homogenized volume fraction $k.$
In the last part, we compare the solutions to \eqref{def-psiN} and to \eqref{def-psic} through the provided approximations.

\medskip

Before going into the core of the section, we recall basics on the resolution of the Laplace problem in the absence of holes \eqref{def-psi0}. Since $f$ has compact support, the unique (up to a constant) $C^1$ solution $\psi_0$ is given by the integral formula:
\begin{equation}\label{form-psi0}
\psi_0 (x) = \dfrac{1}{2\pi}\int_{\mathbb R^2} \ln(|x-y|) f(y)\d y.
\end{equation}
From this explicit formula, it is easy to derive the following standard estimates\footnote{We refer for instance to \cite[App. 2.3]{MarPul}.}:
\begin{itemize}
 \item $\psi_0$ is harmonic in the exterior of $B(0,R_{f})$ and behaves at infinity as follows
\begin{equation}\label{psi0-inf}
 \psi_0(x) = \frac{\int f}{2\pi} \ln |x| + \mathcal{O}\Big(\frac1{|x|}\Big) , \quad \nabla\psi_0(x) = \frac{\int f}{2\pi} \frac{x}{|x|^2} + \mathcal{O}\Big(\frac1{|x|^2}\Big) \, ; 
\end{equation}
 \item $\nabla \psi_0$ is uniformly bounded:
\begin{equation}\label{psi0-bd}
 \nabla \psi_0 (x) = \dfrac{1}{2\pi}\int_{\mathbb R^2} \frac{x-y}{|x-y|^2} f(y)\d y, \quad \| \nabla \psi_0 \|_{L^\infty(\mathbb R^2)} \leq C \| f\|_{L^1(\mathbb R^2)}^{1/2}\| f\|_{L^\infty(\mathbb R^2)}^{1/2},
\end{equation}
 with $C$ independent of $f$ ;
 \item $\nabla \psi_0$ is continuous and almost lipschitz:
\begin{equation}\label{psi0-lips}
| \nabla \psi_0 (x) - \nabla \psi_0 (y) | \leq C (\| f\|_{L^1(\mathbb R^2)} + \| f\|_{L^\infty(\mathbb R^2)}) h(|x-y|), \quad h(r)=r\max( -\ln r , 1),
\end{equation}
 with $C$ independent of $f$.
\end{itemize}

\subsection{Approximation of $\psi_N$ via the method of reflections}

We start by recalling the existence theory for \eqref{def-psiN}. 
At first, we note that the boundary conditions on $\partial \mathcal F_N$ impose that $\psi_N$ is constant on each connected component of $\partial \mathcal F_N. $
Consequently, we may rewrite \eqref{def-psiN} as:
there exist constants $(\psi^{*}_{N,\ell})_{\ell=1,\ldots,N}$ such that:
\begin{equation} \label{eq_defpsiN-2}
 \Delta \psi_{N}=f \text{ in }\mathcal{F}_N,\ \lim_{|x|\to \infty} \nabla \psi_{N}(x)=0,\ \psi_{N}=\psi_{N,\ell}^* \text{ on } \partial \K_{\ell}^a,\ \int_{\partial \K_{\ell}^a} \partial_{n} \psi_{N} \d s = 0 \text{ for all } \ell.
\end{equation}

Existence and uniqueness (up to a constant) of a $C^1$ solution $\psi_N$ follows from the arguments of \cite[Section 1]{Kikuchi} (see also \cite[(2.2)]{Kikuchi}). By standard ellipticity arguments -- and because $\partial\K\in C^{1,\alpha}$ with $\alpha>0$ -- we note that this solution satisfies 
$\psi_N \in W^{2,\infty}_{\loc}(\mathcal F_N)\cap C^1(\overline{\mathcal F_N})$. As $\psi_{N}$ is harmonic in the exterior of $B(0,R_{f})\cup K_{PM}$, it is simple\footnote{Indeed, the maps $z=x_{1}+ix_{2} \to \partial_{1} \psi_{N}-i\partial_{2}\psi_{N}$ is an holomorphic function which admits a Laurent expansion at infinity, where we compute that the leading term is $\frac1{2\pi z} ( \int_{\partial B(0,r)} \nabla \psi_{N}\cdot n + i\int_{\partial B(0,r)} \nabla \psi_{N}\cdot n^\perp)= \frac1{2\pi z} \int_{\mathcal{F}_{N}} f$ (for any $r> R_{f}$).} to obtain that $\psi_{N}$ behaves at infinity like $\psi_{0}$:
\begin{equation}\label{psiN-inf}
 \psi_N(x) = \frac{\int f}{2\pi} \ln |x| + \mathcal{O}\Big(\frac1{|x|}\Big) , \quad \nabla\psi_N(x) = \frac{\int f}{2\pi} \frac{x}{|x|^2} + \mathcal{O}\Big(\frac1{|x|^2}\Big) \, ,
\end{equation}
up to fix that $ \psi_N(x) - \frac{\int f}{2\pi} \ln |x| \to 0$ at infinity (we recall that $\psi_N$ is defined up to constant). It is then obvious that 
\[
\psi_N - \psi_0 \in L^p(\mathcal F_N), \; \forall \, p>2 ; 
\qquad
\nabla (\psi_N - \psi_0) \in L^q(\mathcal F_N), \; \forall\, q>1.
\]

\medskip

We define now the auxiliary fields (the so-called {\it reflections}) which are summed to provide the approximation of $\psi_{N}$. For this, let first note that the one-obstacle version of \eqref{eq_defpsiN-2} with a linear forcing $x \mapsto A\cdot x$ 
(obtained by linearizing the boundary condition coming from the lifting term $\psi_0$) on the boundary reads:
\begin{equation} \label{eq_reflection1}
 \Delta \psi =0 \text{ in }\mathbb R^2 \setminus \K,\quad \psi(x)= A\cdot x + \psi^* \text{ on } \partial \K, \quad\lim_{|x|\to \infty} \psi(x)=0.
\end{equation}
Similarly to \eqref{psiN-inf}, for any $\gamma\in \R$, we note that the unique solution to 
\begin{equation}\label{eq_reflection2}
\Delta \tilde\psi(x)=0 \text{ in }\R^2 \setminus {\K},\ \lim_{|x|\to \infty} \nabla \tilde\psi(x)=0,\ \tilde\psi(x) = A\cdot x \text{ on }\partial {\K} ,\ \int_{\partial \K} \partial_n \tilde\psi \d s = \gamma,
\end{equation}
enjoys the asymptotic expansion 
\begin{equation}\label{phi-inf}
\tilde\psi(x) = \dfrac{\gamma}{2\pi} \ln |x| + cstt + \mathcal{O}\left( \dfrac{1}{|x|}\right).
\end{equation}
Setting $\gamma =0$, $\psi^*=-cstt$ and $\psi=\tilde\psi+\psi^*$, \eqref{eq_reflection2} with the assumption $\int_{\partial \K} \partial_n \tilde\psi \d s =0 $ is equivalent to \eqref{eq_reflection1} where $\psi^*$ is uniquely determined.
In order to get rid of the constant $\psi^*$ we introduce the following definition:
\begin{definition}
We say that the domain $\widetilde{\K}$ is {\em well-centered} if, whatever the value of 
$A \in \mathbb R^2$ there exists a unique solution to 
\begin{equation} \label{reflection}
\Delta V^1[A](x)=0 \text{ in }\R^2 \setminus \widetilde{\K},\ \lim_{|x|\to \infty} V^1[A](x)=0,\ V^1[A](x) = A\cdot x \text{ on }\partial \widetilde{\K} .
\end{equation}
\end{definition}

\begin{remark}\label{rem-disk1}
A disk around the origin $\K=\overline{B(0,1)}$ is well centered and we also have an explicit formula for $V^1[A]$:
 \[
 V^1[A](x)= \dfrac{A\cdot x}{|x|^2} \text{ in } \R^2\setminus \overline{B(0,1)}.
\]
At the opposite, the disk $\K=\overline{B(x_{0},1)}$ where $x_{0}\neq 0$ is not well centered because the bounded solution at infinity of 
\[
\Delta V^1[A](x)=0 \text{ in }\R^2 \setminus \widetilde{\K},\ V^1[A](x) = A\cdot x \text{ on }\partial \widetilde{\K} 
\]
is 
\[
 V^1[A](x)= \dfrac{A\cdot (x-x_{0})}{|x-x_{0}|^2} + A\cdot x_{0} \text{ in } \R^2\setminus \overline{B(x_{0},1)},
\]
which cannot vanishes at infinity for every $A$.
\end{remark}

We prove in the following lemma that, up to shift a little the domain $\K$, we can assume that $\K$ is well-centered and that $V^1[A]$ behaves at infinity as in the case of the disk $\overline{B(0,1)}$:

\begin{lemma}\label{V1-decay}
Let $\K$ be a connected and simply-connected compact set of $\R^2$ whose boundary $\partial\K$ is a $C^{1,\alpha}$ Jordan curve (with $\alpha>1$). There exists a unique $c_{\K}$ in the convex hull of $\K$ such that, for any $A \in \mathbb R^2$, there exists a unique $C^1$ solution to \eqref{reflection} with $\widetilde{\K} = \K-c_{\K}$. Moreover, there exists a matrix $\widetilde{M_{\K}}\in \mathcal{M}_{2}(\R)$ and bounded vector fields $(h_{m})_{m=(m_1,m_2) \in \N^2}$ on $\R^2\setminus \tilde{\K}$ which depend only on the shape of $\K$ such that we have for all $x\in \R^2\setminus (\widetilde{\K}\cup B(0,1))$:
 \[
 \partial_ m V^1[A](x)= \partial_m \dfrac{(\widetilde{M_{\K}}A)\cdot x}{|x|^2} + A\cdot \frac{h_{m}(x)}{|x|^{2+m_1+ m_2 }} \quad\text{for any } m \in \N^2.
 \]
\end{lemma}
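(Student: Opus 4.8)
The plan is to solve the one–obstacle problem \eqref{reflection} explicitly by conformal mapping; this produces simultaneously the centering point $c_{\K}$, the matrix $\widetilde{M_{\K}}$, and the remainder estimates. Identify $\R^{2}\cong\C$. Since $\partial\K$ is a $C^{1,\alpha}$ Jordan curve, $\widehat{\C}\setminus\K$ is a Jordan domain containing $\infty$, so by the Riemann mapping theorem there is a conformal map $\Psi$ from $\{|w|>1\}$ onto $\C\setminus\K$ with $\Psi(\infty)=\infty$ and Laurent expansion $\Psi(w)=\beta w+a_{0}+\sum_{n\ge1}a_{n}w^{-n}$ at infinity, uniquely normalized by $\beta>0$. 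By Carathéodory's theorem $\Psi$ extends to a homeomorphism of the closures in $\widehat\C$, whence $\sup_{n}|a_{n}|<\infty$, and by the Kellogg–Warschawski theorem the extension is $C^{1,\alpha}$ up to $\{|w|=1\}$. For $c\in\R^{2}$, the conformal map of $\C\setminus(\K-c)$ is $w\mapsto\Psi(w)-c$, and the unique bounded harmonic function on $\C\setminus(\K-c)$ with boundary value $x\mapsto A\cdot x$ is $u_{c}\circ(\Psi-c)^{-1}$, where $u_{c}$ is the bounded harmonic extension to $\{|w|>1\}$ of $w\mapsto A\cdot(\Psi(w)-c)$ on $\{|w|=1\}$. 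By the Poisson formula for the exterior disk, its value at infinity is $u_{c}(\infty)=\tfrac{1}{2\pi}\int_{0}^{2\pi}A\cdot(\Psi(e^{i\theta})-c)\,d\theta=A\cdot(a_{0}-c)$, which vanishes for every $A$ iff $c=a_{0}$. I therefore set
\[
c_{\K}:=a_{0}=\frac{1}{2\pi}\int_{0}^{2\pi}\Psi(e^{i\theta})\,d\theta ;
\]
being an average of points of $\partial\K$ against a probability measure, it lies in the convex hull of $\K$, and it is the only shift for which $\widetilde\K:=\K-c_{\K}$ is well–centered. Existence of a $C^{1}$ solution $V^{1}[A]$ then follows from the above plus Schauder regularity ($\partial\widetilde\K\in C^{1,\alpha}$, polynomial boundary datum), and uniqueness among $C^{1}$ solutions from the maximum principle applied on $\widehat{\C}\setminus\widetilde\K$ to the difference of two solutions.

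Next I would make $V^{1}[A]$ explicit. Write $\widetilde\Psi(w):=\Psi(w)-c_{\K}=\beta w+\sum_{n\ge1}a_{n}w^{-n}$ (no constant term, by the choice of $c_{\K}$) and $\widetilde\Phi:=\widetilde\Psi^{-1}$, so that $\widetilde\Phi(z)=\tfrac{z}{\beta}+\sum_{n\ge1}b_{n}z^{-n}$ near infinity. On $\{|w|=1\}$ one has $w^{-1}=\overline w$, and using $A\cdot x=\mathrm{Re}(\overline A\,x)$ and $\mathrm{Re}(\zeta)=\mathrm{Re}(\overline\zeta)$ one gets
\[
A\cdot\widetilde\Psi(w)=\mathrm{Re}\Bigl((\beta A+\overline A\,a_{1})\,w^{-1}+\sum_{n\ge2}\overline A\,a_{n}\,w^{-n}\Bigr)\qquad(|w|=1).
\]
Setting $c_{1}(A):=\beta A+\overline A\,a_{1}$ and $c_{n}(A):=\overline A\,a_{n}$ for $n\ge2$ (each is $\R$–linear in $A$ with $|c_{n}(A)|\le C_{\K}|A|$), the right–hand side extends to the bounded harmonic function $w\mapsto\mathrm{Re}\bigl(\sum_{n\ge1}c_{n}(A)w^{-n}\bigr)$ on $\{|w|>1\}$, which vanishes at infinity; composing with $\widetilde\Phi$ and invoking uniqueness yields
\[
V^{1}[A](x)=\mathrm{Re}\,F_{A}(x),\qquad F_{A}(z):=\sum_{n\ge1}c_{n}(A)\,\widetilde\Phi(z)^{-n}.
\]

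For the expansion, from $\widetilde\Phi(z)=\tfrac z\beta+O(|z|^{-1})$ I get $\widetilde\Phi(z)^{-1}=\tfrac{\beta}{z}+O(|z|^{-3})$ and $\widetilde\Phi(z)^{-n}=\tfrac{\beta^{n}}{z^{n}}(1+O(|z|^{-2}))$; since $\mathrm{Re}(\zeta/z)=\zeta\cdot x/|x|^{2}$, the leading term of $V^{1}[A]$ is $\mathrm{Re}(\beta c_{1}(A)/z)=(\widetilde{M_{\K}}A)\cdot x/|x|^{2}$, where $\widetilde{M_{\K}}\in\mathcal M_{2}(\R)$ is the real matrix of the $\R$–linear map $A\mapsto\beta c_{1}(A)=\beta^{2}A+\beta a_{1}\overline A$; in particular $\widetilde{M_{\K}}$ depends only on $\beta,a_{1}$, i.e. only on the shape of $\K$ (and $=\mathrm{I}_{2}$ for $\K=\overline{B(0,1)}$, where $\beta=1$, $a_{1}=0$, recovering Remark~\ref{rem-disk1}). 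Put $H_{A}(z):=F_{A}(z)-\beta c_{1}(A)/z$, holomorphic on $\C\setminus(\widetilde\K\cup\{0\})$; the above gives $|H_{A}(z)|\le C_{\K}|A|\,|z|^{-2}$ for $|z|\ge R_{\K}$, hence by Cauchy's estimates $|H_{A}^{(k)}(z)|\le C_{\K,k}|A|\,|z|^{-2-k}$ for $|z|\ge2R_{\K}$. Using $\partial^{m_{1}}_{x_{1}}\partial^{m_{2}}_{x_{2}}\mathrm{Re}\,F=\mathrm{Re}(i^{m_{2}}F^{(m_{1}+m_{2})})$ for holomorphic $F$, one obtains $|\partial_{m}V^{1}[A](x)-\partial_{m}((\widetilde{M_{\K}}A)\cdot x/|x|^{2})|\le C_{\K,m}|A|\,|x|^{-2-m_{1}-m_{2}}$ for $|x|\ge2R_{\K}$; as this quantity is $\R$–linear in $A$, it may be written $A\cdot h_{m}(x)/|x|^{2+m_{1}+m_{2}}$ with $|h_{m}(x)|\le C_{\K,m}$. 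Finally, for $x$ with $|x|\ge1$ and $\dist(x,\widetilde\K)\ge\rho>0$, interior elliptic estimates for the harmonic function $V^{1}[A]$, together with the maximum–principle bound $\sup_{\C\setminus\widetilde\K}|V^{1}[A]|\le|A|\max_{y\in\widetilde\K}|y|$, give $|\partial_{m}V^{1}[A](x)|\le C_{m,\rho}|A|$, while $|\partial_{m}((\widetilde{M_{\K}}A)\cdot x/|x|^{2})|\le C_{m}|A|$ since $|x|\ge1$; this extends $h_{m}$ as a bounded field to the whole region $\R^{2}\setminus(\widetilde\K\cup B(0,1))$.

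The conformal computation is routine once the boundary–regularity inputs are in place (Carathéodory for continuity up to $\{|w|=1\}$, and the $C^{1,\alpha}$ Jordan hypothesis through Kellogg–Warschawski/Schauder). The one genuinely delicate point is the behaviour near $\partial\widetilde\K$: for $|m|\ge2$ the derivatives $\partial_{m}V^{1}[A]$ need not be bounded up to a merely $C^{1,\alpha}$ boundary, so the identity with bounded $h_{m}$ is only clean at positive distance from $\widetilde\K$ — which is all that is used in the sequel, since there the argument of $V^{1}[A]$ is always of size $\gtrsim d/a\gg1$. The other point needing care is the bookkeeping of real– versus complex–linearity, so that $\widetilde{M_{\K}}$ genuinely comes out as a $2\times2$ real matrix; the appearance of $\overline A$ (hence of a non–$\C$–linear contribution) is exactly what prevents $\widetilde{M_{\K}}$ from being a scalar multiple of $\mathrm{I}_{2}$ for a general shape.
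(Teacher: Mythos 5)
Your proof is correct and follows the same underlying strategy as the paper's — both rest on the Riemann mapping to the exterior of the unit disk and on the Laurent expansion of the resulting holomorphic data at infinity — but your execution differs in a way worth noting. The paper works with the map $\mathcal T:\R^2\setminus\K\to\R^2\setminus\overline{B(0,1)}$, expresses the exterior Dirichlet Green's function through $\mathcal T$, writes $\phi$ by the Green representation formula, and reads off $c_\K$ from the condition that the constant term of the boundary integral vanish; the matrix $\widetilde{M_\K}$ then appears as a rather opaque boundary integral $\frac1{\beta\pi}\bigl(\int_{\partial\K}(y_j-c_{\K j})\mathcal T_i(y)\sqrt{\det D\mathcal T(y)}\,d\sigma\bigr)_{i,j}$. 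You instead pull back by the inverse map $\Psi:\{|w|>1\}\to\C\setminus\K$, expand the boundary datum in a Fourier series on the circle, extend mode-by-mode, and push forward; this gives the closed formula $V^1[A]=\mathrm{Re}\,\sum_{n\ge1}c_n(A)\widetilde\Phi(z)^{-n}$, identifies $c_\K$ as the zeroth Laurent coefficient $a_0=\frac1{2\pi}\int_0^{2\pi}\Psi(e^{i\theta})\,d\theta$, and exposes $\widetilde{M_\K}$ as the $\R$-linear map $A\mapsto\beta^2A+\beta a_1\overline A$. What this buys you is transparency: it makes plain that $\widetilde{M_\K}$ is real symmetric with trace $2\beta^2$ (with $\beta=\Psi'(\infty)$, reciprocal to the paper's normalization), a structure that is not visible from the Green's-function integral, and it reduces the higher-order decay to Cauchy's estimates on a single explicit holomorphic function rather than to an ad hoc Laurent-expansion argument. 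Your closing remark about the boundary behaviour of $h_m$ for $|m|\ge2$ is a sharp observation: with $\partial\widetilde\K$ only $C^{1,\alpha}$, boundedness of $h_m$ on \emph{all} of $\R^2\setminus(\widetilde\K\cup B(0,1))$ cannot be expected, and the paper's proof (which argues only from the Laurent series at infinity) shares this gap; as you note, the estimate is only ever invoked at distance $\gtrsim d/a\gg1$ from $\widetilde\K$, so it is harmless for the application.
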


\begin{proof}
Let $A\in \R^2$ fixed. Setting $\phi(x) = V^1[A](x-c_{\K})$, we look for a condition on $c_{\K}$ such that the following problem is well-posed
\[
\Delta \phi(x)=0 \text{ in }\R^2 \setminus \K,\ \lim_{|x|\to \infty} \phi(x)=0,\ \phi(x) = A\cdot (x-c_{\K}) \text{ on }\partial \K .
\]

Defining $\psi = \phi - (A\cdot (x-c_{\K}))\chi(x)$ with $\chi$ a convenient cutoff function, it is clear from the Dirichlet Laplace problem in exterior domains that there exists (for any $c_{\K}$) a unique $C^1$ solution such that 
\[
\Delta \phi(x)=0 \text{ in }\R^2 \setminus \K,\ \lim_{|x|\to \infty} \nabla\phi(x)=0,\ \phi(x) = A\cdot (x-c_{\K}) \text{ on }\partial \K , \ \int_{\partial \K} \partial_n \phi \d s = 0,
\]
and we provide now an explicit formula in terms of Green's function, from where we will find $c_{\K}$ and the asymptotic behavior.

As explained above (see\eqref{phi-inf}), the condition $\int_{\partial \K} \partial_n \phi \d s = 0$ implies that (for any $c_{\K}$) we have the following expansion at infinity
\[
\nabla \phi(x) = \mathcal{O}\Big(\frac1{|x|^2}\Big) \quad \text{and} \quad \phi(x) = \mathcal{O}(1).
\]

Identifying $\R^2=\C$, by the Riemann mapping theorem, we consider the unique $\cal T$ biholomorphism from $\R^2\setminus \K$ to $\R^2\setminus \overline{B(0,1)}$ which verifies $\cal T(\infty)= \infty$ and $\cal T'(\infty)\in \R^+_{*}$, which reads as
 \[
 \cal T(z) = \beta z + g(z)
 \]
 for $\beta \in \R^+_{*}$ and $g$ a bounded holomorphic function. It is then well known that we can express the Dirichlet Green's function in terms of $\cal T$:
 \[
 G(x,y)=\frac1{2\pi}\ln \frac{|\cal T(y)-\cal T(x)|}{|\cal T(y)-\cal T(x)^*| |\cal T(x)|},\quad \text{ where } \xi^*=\frac{\xi}{|\xi|^2}.
 \]
 We refer for instance to \cite{ILL} where such a formula was used in the context of the Euler system. In particular, we note that for $x\in \R^2\setminus \K$ fixed, we have the following behavior when $y\to \infty$
 \[
 G(x,y) = \mathcal{O}(1) \quad \text{and} \quad \nabla_{y} G(x,y) =\frac{D \cal T^T(y)}{2\pi} \Big( \frac{\cal T(y)-\cal T(x)}{|\cal T(y)-\cal T(x)|^2} - \frac{\cal T(y)-\cal T(x)^*}{|\cal T(y)-\cal T(x)^*|^2} \Big) = \mathcal{O}\Big(\frac1{|y|^2}\Big).
\]

As $\cal T$ maps $\partial \K$ to $\partial B(0,1)$, a parametrization of the boundary $\partial K$ is given by $t\mapsto \cal T^{-1}\begin{pmatrix}\cos t \\ \sin t\end{pmatrix}$, hence a tangent vector at the point $y=\cal T^{-1}\begin{pmatrix}\cos t \\ \sin t\end{pmatrix}$ is given by 
\[
D\cal T^{-1}\begin{pmatrix}\cos t \\ \sin t\end{pmatrix} \begin{pmatrix}-\sin t \\ \cos t\end{pmatrix}= \Big(D \cal T(y)\Big)^{-1} \cal T^\perp(y) =\frac1{\det D\cal T(y)}D \cal T^T(y) \cal T^\perp(y),
\]
 where we have used that $D\cal T$ is under the form $\begin{pmatrix} a & b \\ -b & a \end{pmatrix}$ (Cauchy-Riemann equations). Using again that $\cal T(y)\in \partial B(0,1)$ and the form of $D\cal T$, we deduce that the outer normal of $\R^2\setminus \K$, denoted by $n$, is
 \[
 n(y)=- \frac{D \cal T^T(y) \cal T(y)}{|D \cal T^T(y) \cal T(y)|}= -\frac{D \cal T^T(y) \cal T(y)}{\sqrt{\det D\cal T(y)}}.
 \]
 Thanks to the decay properties of $G$ and $\phi$, this function $G$ allows us to derive the following representation formula for $\phi$:
\begin{align*}
\phi(x) =& \int_{\partial \K}(A\cdot (y-c_{\K})) (\nabla_{y} G(x,y)\cdot n(y)) \d \sigma(y) \\
 =& \frac A{2\pi} \cdot \int_{\partial \K} (y-c_{\K}) \Bigg[ \Bigg( D \cal T^T(y) \Big( \frac{\cal T(y)-\cal T(x)}{|\cal T(y)-\cal T(x)|^2} - \frac{\cal T(y)-\cal T(x)^*}{|\cal T(y)-\cal T(x)^*|^2} \Big) \Bigg)\cdot \Bigg(- \frac{D \cal T^T(y) \cal T(y)}{\sqrt{\det D\cal T(y)}} \Bigg) \Bigg] \d \sigma(y)\\
 =& -\frac A{2\pi} \cdot \int_{\partial \K} (y-c_{\K}) \Big( \frac{1-\cal T(x) \cdot \cal T(y)}{|\cal T(y)-\cal T(x)|^2} - \frac{1-\cal T(x)^*\cdot \cal T(y)}{|\cal T(y)-\cal T(x)^*|^2} \Big) \sqrt{\det D\cal T(y)} \d \sigma(y).
\end{align*}
 Now we use that $\cal T(x) = \beta x + \mathcal{O}(1)$ and again that $\cal T(y)\in \partial B(0,1)$ to get 
\begin{align*}
\phi(x) =& -\frac A{2\pi} \cdot \int_{\partial \K} (y-c_{\K}) \Big(-1 -2 \frac{x\cdot \cal T(y)}{\beta|x|^2} +\mathcal{O}\Big(\frac1{|x|^2} \Big) \Big)\sqrt{\det D\cal T(y)} \d \sigma(y)\\
 =&\frac A{2\pi} \cdot \Bigg( \int_{\partial \K} y \sqrt{\det D\cal T(y)} \d \sigma(y) - c_{\K}\int_{\partial \K} \sqrt{\det D\cal T(y)} \d \sigma(y)\Bigg) \\
 &+
 \frac A{\pi} \cdot \int_{\partial \K} (y-c_{\K}) \frac{x\cdot \cal T(y)}{\beta|x|^2} \sqrt{\det D\cal T(y)} \d \sigma(y) + A\cdot \mathcal{O}\Big(\frac1{|x|^2}\Big).
\end{align*}
It is then obvious that $\phi(x)\to 0$ at infinity if and only if
\[
c_{\K}= \frac{ \int_{\partial \K} y \sqrt{\det D\cal T(y)} \d \sigma(y)}{ \int_{\partial \K} \sqrt{\det D\cal T(y)} \d \sigma(y)} 
\]
which belongs to the convex hull of $\K$. 

Setting
\[
\widetilde{M_{\K}} = \frac1{\beta\pi} \Big(\int_{\partial \K} (y_{j}-c_{\K j}) \cal T_{i}(y) \sqrt{\det D\cal T(y)} \d \sigma(y) \Big)_{i,j}
\] 
gives the expansion of $V^1[A]$, because it is clear that
\[
V^1[A](x)= \phi (x+c_{\K}) = \dfrac{(\widetilde{M_{\K}}A)\cdot (x+c_{\K})}{|x+c_{\K}|^2} + A\cdot \mathcal{O}\Big(\frac1{|x|^2}\Big) = \dfrac{(\widetilde{M_{\K}}A)\cdot x}{|x|^2} + A\cdot \mathcal{O}\Big(\frac1{|x|^2}\Big)
\]
at infinity whereas $h_{0}(x)$ is bounded in any bounded subset of $\R^2\setminus (\widetilde{\K}\cup B(0,1))$.

Like for \eqref{psiN-inf}, we notice that the maps $z=x_{1}+ix_{2} \to \partial_{1} V^1[A]-i\partial_{2}V^1[A]$ admits a Laurent expansion at infinity, which is compatible with the previous expansion only if 
\[\partial_{i} \Big( V^1[A](x) - \dfrac{(\widetilde{M_{\K}}A)\cdot x}{|x|^2}\Big)=A\cdot\mathcal{O}(1/|x|^3) \quad \text{for } i=1,2.\]
By the Laurent series, we directly conclude that the decomposition $\partial_{m} \Big( V^1[A](x) - \dfrac{(\widetilde{M_{\K}}A)\cdot x}{|x|^2}\Big)=A\cdot\mathcal{O}(1/|x|^{2+m_{1}+m_{2}})$ holds true for any $m\in \N^2$.
\end{proof}

\begin{remark}\label{rem-disk}
In the case of a circular hole centered at the origin $\K=\overline{B(0,1)}$, we notice that $\cal T={\rm Id}$ gives $c_{\K}=0$ and $\widetilde{M_{\K}}={\rm I}_{2}$, which corresponds to Remark~\ref{rem-disk1}.
\end{remark}

From now on, we assume further that $\K$ is well-centered, namely
\[
\int_{\partial \K} y \sqrt{\det D\cal T(y)} \d \sigma(y) = 0.
\]
 We emphasize that this assumption is harmless for the computations below.
Indeed, the content of Lemma~\ref{V1-decay} yields that this assumption amounts to shift the origin of the frame in which the set $\K$ is defined. However, the necessary shift maps the origin into a point
inside the convex hull of $\K$ (and thus in the square $[-1,1]^2$ like $\K$) while the distance between the (scaled) holes of the porous medium $(\K^{a}_{\ell})_{\ell=1,\ldots,N}$ is much larger than the hole width. Hence, changing the origin
for the point that makes the set $\K$ well-centered does not change our results
on the method of reflections below. Consequently, we avoid tildas over sets ${\K}$ from now on.

\medskip

Given $a >0$, $a\K$ is also well centered, so for any $A \in \mathbb R^2$, there exists a unique solution $V^{a}[A]$ to 
\begin{equation} \label{reflection-a}
\Delta V^a[A](x)=0 \text{ in }\R^2 \setminus (a\K),\ \lim_{|x|\to \infty} V^a[A](x)=0,\ V^a[A](x) = A\cdot x \text{ on }\partial (a\K) ,
\end{equation}
which clearly verifies the scaling law
\begin{equation}\label{scaling-V}
 V^a[A](x)=a V^1[A](x/a).
\end{equation}
Let us note that the behavior of $V^a[A](x)$ for $|x|\geq d$, {\em i.e.} when $|x|/a\geq d/a\gg 1$, is given by Lemma~\ref{V1-decay}: 
\[
 V^a[A](x)= a^2 \dfrac{(\M_{\K}A)\cdot x}{|x|^2} + a^2 A\cdot \frac{h_{0}(x/a)}{|x|^2} = a^2 \dfrac{(\M_{\K}A)\cdot x}{|x|^2} + \Big( \frac ad\Big)^2 A\cdot \mathcal{O}(1).
\]

\medskip

From the behavior of $\nabla V^1$ at infinity, it is also clear that
\[
\lim_{R\to \infty} \int_{B(0,R)} \partial_n V^a[A](x) \d s =0
\]
which means by harmonicity that
\begin{equation}\label{Va-circ}
\int_{\partial (a\K)} \partial_n V^a[A](x) \d \sigma = 0. 
\end{equation}

\medskip

We provide now an approximation of the solution $\psi_{N}$ {\em via} the following iterative process {\em i.e.}, the so-called "method of reflections". At first, we consider the Laplace solution in the absence of holes \eqref{form-psi0}:
\begin{equation} \label{eq_A_init}
\psi^{(0)} = \psi_0.
\end{equation}
As $f$ vanishes inside the holes (we recall that $f$ has support in $\overline{\mathcal F_N}$ by assumption), we get by Stokes theorem that $\int_{\partial \K_{\ell}^a} \partial_{n} \psi_{0} \d s = 0$ for all $\ell$. Therefore $\psi^{(0)}$ verifies every condition of \eqref{def-psiN} except that it is not constant on $\partial \K_{\ell}^a$:
\[
\psi^{(0)}(x) = \psi^{(0)}(x_{\ell}) + \nabla \psi_0(x_\ell)\cdot (x-x_{\ell}) + o(x-x_{\ell}) \quad \text{on } \partial \K_{\ell}^a.
\]
Hence, we use the reflections introduced in \eqref{reflection} to correct the main error:
\begin{align}
A_{\ell}^{(1)} &:= -\nabla \psi_0(x_\ell) , \ \ell=1,\ldots,N, \label{A1}\\
\psi^{(1)} &:=\psi^{(0)}+\phi^{(1)}, \text{ with }\phi^{(1)}:=\sum_{\ell=1}^N V^{a}[A_{\ell}^{(1)}](x-x_\ell) . \nonumber
\end{align}
By \eqref{Va-circ} and by harmonicity of $V^a$, we note that $\psi^{(1)}$ verifies again every condition of \eqref{def-psiN} except that it is still not constant on $\partial \K_{\ell}^a$, but the non constant part will be smaller due to the decay property of $V^{a}$ (see Lemma~\ref{V1-decay}):
\begin{align*}
\psi^{(1)}(x) &=\psi_0(x_\ell)+ \sum_{\lambda \neq \ell}V^{a}[A^{(1)}_{\lambda}](x-x_\lambda)+ o(x-x_{\ell}) \quad \text{on } \partial \K_{\ell}^a\\
&=\psi^{(1)}(x_{\ell})+ \Big( \sum_{\lambda \neq \ell}\nabla V^{a}[A^{(1)}_{\lambda}](x_\ell-x_\lambda)\Big)\cdot (x-x_{\ell}) + o(x-x_{\ell}) \quad \text{on } \partial \K_{\ell}^a.
\end{align*}
We iterate this procedure: for any $n \in \mathbb N$ assuming the approximate solution $\psi^{(n)}$ to be constructed, we define:
\begin{align}
A^{(n+1)}_{\ell} & := - \sum_{\lambda\neq \ell}\nabla V^{a}[A^{(n)}_{\lambda}](x_\ell-x_\lambda) , \ \ell=1,\ldots,N, \label{eq_A_iter}\\
\psi^{(n+1)} &:=\psi^{(n)}+\phi^{(n+1)}, \text{ with }\phi^{(n+1)}:=\sum_{\ell=1}^N V^{a}[A_{\ell}^{(n+1)}](x-x_\ell) ,\label{eq_phi_iter}
\end{align}
which satisfies
\begin{align*}
 \psi^{(n+1)}(x)&=\psi^{(n)}(x_\ell) + \sum_{\lambda\neq \ell}V^{a}[A^{(n+1)}_{\lambda}](x-x_\lambda) +o(x-x_\ell ) \quad \text{on } \partial \K_{\ell}^a\\
&=\psi^{(n+1)}(x_{\ell})+ \Big( \sum_{\lambda \neq \ell}\nabla V^{a}[A^{(n+1)}_{\lambda}](x_\ell-x_\lambda)\Big)\cdot (x-x_{\ell}) + o(x-x_{\ell}) \quad \text{on } \partial \K_{\ell}^a.
\end{align*}

\medskip

For technical purpose, we associate to the $(A^{(n)}_{\ell})_{\ell=1,\ldots,N}$ the following vector-field:
\begin{equation} \label{eq_Phi}
\Phi^{(n)}(x):=\frac{4}{\pi^2}\sum_{\ell=1}^{N}A^{(n)}_{\ell}\mathbbm{1}_{B(x_\ell,d/2)}(x),
\end{equation}
where $\mathbbm{1}_{B(x_\ell,d/2)}(x)$ is the indicator function of ${B(x_{\ell},d/2)}(x)$. As the disks are disjoint, we have:
\begin{equation} \label{eq_LpPhi}
\|\Phi^{(n)}\|_{L^p(\mathbb R^2)} = \left(\dfrac{4^{p-1}d^{2}}{\pi^{2p-1}} \sum_{\ell=1}^N |A_{\ell}^{(n)}|^p \right)^{\frac 1p}
\end{equation}
for arbitrary finite $p.$

\medskip

The main purpose of this section is to prove that this method of reflections converges and that it yields a good approximation of $\psi_N.$
To this end, we control at first the sequence of vectors $(A_{\ell}^{(n)})_{\ell=1,\ldots,N}$:

\begin{lemma}\label{Aapp}
Assume that $0<\eps_0<1/2$ and $q \in (1,\infty).$
There exists a constant $C_{ref}$ depending only on $q$ for which the sequence $((A_{\ell}^{(n)})_{\ell=1,\ldots,N})_{n\in \mathbb N^*}$ as defined by \eqref{A1}-\eqref{eq_A_iter} satisfies:
\[
\Big(\sum_{\ell=1}^N |A^{(n+1)}_{\ell}|^q\Big)^{1/q}\leq C_{ref}(q)\Big(\frac{a}{d}\Big)^{2(1-\frac{1}{q})}\left(1+\ln\left( \frac{1+2\eps_0}{1-2\eps_0}\right)\right)\Big(\sum_{\ell=1}^N |A^{(n)}_{\ell}|^q\Big)^{1/q}.
\]
\end{lemma}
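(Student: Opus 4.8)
The recursion \eqref{eq_A_iter} expresses $A^{(n+1)}_\ell$ as a sum over $\lambda \neq \ell$ of $\nabla V^a[A^{(n)}_\lambda](x_\ell - x_\lambda)$. Since every center $x_\lambda$ is at distance at least $d$ from $x_\ell$, the expansion of Lemma~\ref{V1-decay} together with the scaling law \eqref{scaling-V} gives
\[
\nabla V^a[A^{(n)}_\lambda](x_\ell - x_\lambda) = a^2 \, \nabla\!\left(\frac{(\M_{\K}A^{(n)}_\lambda)\cdot x}{|x|^2}\right)\!\Big|_{x=x_\ell-x_\lambda} + \Big(\frac{a}{d}\Big)^{2} A^{(n)}_\lambda \cdot \mathcal{O}\Big(\frac{a}{|x_\ell-x_\lambda|}\Big),
\]
so that pointwise $|\nabla V^a[A^{(n)}_\lambda](x_\ell-x_\lambda)| \leq C a^2 |A^{(n)}_\lambda|\,|x_\ell - x_\lambda|^{-2}$, with $C$ depending only on the shape $\K$. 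Hence
\[
|A^{(n+1)}_\ell| \leq C a^2 \sum_{\lambda\neq\ell} \frac{|A^{(n)}_\lambda|}{|x_\ell-x_\lambda|^2}.
\]
The plan is to bound the right-hand side by a discrete convolution operator and estimate its $\ell^q$-norm via a Schur test / Young-type inequality, using the geometric separation $d$ to convert the discrete sum into an integral.

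\textbf{Main step: the convolution estimate.} I would introduce the kernel $k_\ell(\lambda) = |x_\ell - x_\lambda|^{-2}$ for $\lambda \neq \ell$ and observe that, because the points $(x_\ell)$ are $d$-separated, the disks $B(x_\lambda, d/2)$ are disjoint; this is exactly why the auxiliary field $\Phi^{(n)}$ in \eqref{eq_Phi} and its $L^p$ identity \eqref{eq_LpPhi} are introduced. For a fixed $\ell$ and any $\lambda\neq\ell$ with $|x_\ell-x_\lambda|=r$, the ball $B(x_\lambda,d/2)$ is contained in the annulus $\{r/2 \leq |y-x_\ell| \leq 2r\}$ (using $r\geq d$), and on that ball $|y-x_\ell|^{-2}$ is comparable to $r^{-2}$ up to absolute constants; therefore
\[
\frac{1}{|x_\ell-x_\lambda|^2} \leq \frac{C}{d^2}\int_{B(x_\lambda,d/2)} \frac{\d y}{|y-x_\ell|^2}.
\]
Summing over $\lambda \neq \ell$ and using disjointness, the sum $\sum_{\lambda\neq\ell}|x_\ell-x_\lambda|^{-2}$ is dominated by $C d^{-2}\int_{d/2 \leq |y-x_\ell| \leq R_{PM}} |y-x_\ell|^{-2}\d y \leq C d^{-2}\ln(R_{PM}/d)$, but more usefully I would keep the weighted version: multiplying $|A^{(n)}_\lambda|$ inside, one gets
\[
\sum_{\lambda\neq\ell}\frac{|A^{(n)}_\lambda|}{|x_\ell-x_\lambda|^2} \leq \frac{C}{d^2}\int_{\mathbb R^2} \frac{|\Phi^{(n)}(y)|}{|y-x_\ell|^2}\,\mathds{1}_{\{|y-x_\ell|\geq d/2\}}\,\d y
\]
(absorbing the $\tfrac{\pi^2}{4}$-type constants from \eqref{eq_Phi}). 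The truncated kernel $y\mapsto |y|^{-2}\mathds{1}_{\{|y|\geq d/2\}}$ lies in $L^s(\mathbb R^2)$ for every $s>1$ with $\| \cdot\|_{L^s} = C_s d^{2/s - 2}$ by scaling; Young's convolution inequality with exponents $\tfrac1s + \tfrac1p = 1 + \tfrac1q$ (take $s$ determined by $p=q$, i.e. $\tfrac1s = 1 - \tfrac1q + \tfrac1q$... here one picks $s$ so that $1/s = 1 - 1/q' = 1/q$ is wrong — rather one uses $p=q$ and $s$ with $1/s=1$, so a direct Hölder on each $\ell$ is cleaner) then gives, after raising to the power $q$ and summing over $\ell$ while re-using disjointness of the $B(x_\ell,d/2)$ on the output side,
\[
\Big(\sum_{\ell}|A^{(n+1)}_\ell|^q\Big)^{1/q} \leq C a^2\, d^{-2}\, \| \,|y|^{-2}\mathds{1}_{|y|\geq d/2}\,\|_{L^1\text{-truncated}} \Big(\sum_\ell |A^{(n)}_\ell|^q\Big)^{1/q}.
\]
Tracking the scaling: $a^2 d^{-2}$ times $\int_{d/2\leq |y|\leq 2R_{PM}}|y|^{-2}$ would produce a $\ln(R_{PM}/d)$; to get instead the stated factor $(a/d)^{2(1-1/q)}\big(1+\ln\frac{1+2\eps_0}{1-2\eps_0}\big)$ one must be more careful — split the convolution kernel's $L^1$-mass near the scale $r\sim d$ (where the bound $a/d\le\eps_0$ and the precise constant $\frac{1+2\eps_0}{1-2\eps_0}$ enter, this ratio being the quotient of outer/inner radii of the thinnest admissible annulus $d-a \le r \le d+a$... more precisely $\frac{d/2+a/2}{d/2-a/2}=\frac{1+a/d}{1-a/d}\le\frac{1+2\eps_0}{1-2\eps_0}$) from the far part, and use a Schur test with the $q$ and $q'$ exponents which yields the $a^2\mapsto (a/d)^{2(1-1/q)}$ gain rather than a plain $(a/d)^2$.

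\textbf{Where the difficulty lies.} The conceptually routine part is the pointwise decay $|\nabla V^a[A]| \lesssim a^2|x|^{-2}|A|$ for $|x|\geq d$, which is immediate from Lemma~\ref{V1-decay} and \eqref{scaling-V}. The delicate part is the discrete-to-continuous comparison producing exactly the exponent $2(1-1/q)$ and the explicit logarithmic constant $1+\ln\frac{1+2\eps_0}{1-2\eps_0}$: one cannot afford the crude $\ln(R_{PM}/d)$ bound (it would blow up as $N\to\infty$ with the porous medium pinned in $K_{PM}$), so one must exploit that the operator $A^{(n)}\mapsto A^{(n+1)}$ is, after the $\Phi^{(n)}$ encoding, convolution with an $L^1$-normalized-at-scale-$d$ kernel, whose relevant mass is controlled by how many separated points can sit in each dyadic annulus — a packing argument giving $O(1)$ points in the first annulus and the $\sum_j 2^{-2j}\cdot(\#\text{points in annulus }j)$-type geometric series for the rest. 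Carrying the $\ell^q$ bookkeeping through this (Schur test on $\ell^q\to\ell^q$ with the mixed $1/q+1/q'$ split) to land on the clean statement is the main obstacle; everything else is scaling and disjointness.
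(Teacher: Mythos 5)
Your starting point is right -- the decay estimate $|\nabla V^a[A](x)|\lesssim a^2|A|/|x|^2$ for $|x|\geq d$, and the idea of converting the discrete sum to a continuous one via the disjointness of the $B(x_\lambda,d/2)$'s and the encoding $\Phi^{(n)}$. You also correctly diagnose the obstruction: a crude treatment of the kernel yields a $\ln(R_{PM}/d)$ that diverges as $d\to 0$. But there is a genuine gap where you leave the hard part as a sketch, and the mechanism you propose to close it would not work.

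The key point you are missing is that the paper avoids the log \emph{not} by a Schur test or dyadic packing, but by invoking the Calder\'on--Zygmund theorem. After your pointwise bound $|\nabla V^a[A]|\lesssim a^2|A||x|^{-2}$, you have thrown away the oscillation of the kernel and are left with the nonnegative kernel $|z|^{-2}\mathds{1}_{|z|\geq d/2}$, which has $L^1$-norm $\sim\ln(R_{PM}/d)$; no amount of Schur or dyadic repackaging will make that bounded uniformly in $d$. The paper instead keeps the full vector kernel $D(z/|z|^2)\mathds{1}_{|z|\geq d/2}$ (i.e., $\nabla$ of the dipole, not its absolute value), which has zero mean on spheres and satisfies the Calder\'on--Zygmund conditions; CZ then gives $\|F^{(n)}\|_{L^q}\leq C(q)\|\Phi^{(n)}\|_{L^q}$ with a constant independent of $d$ -- that is where the log is killed. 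To reach that situation the paper uses the mean-value formula \emph{twice}: first to replace $\nabla(\M_\K A_\lambda\cdot(x_\ell-\cdot)/|x_\ell-\cdot|^2)|_{x_\lambda}$ by its average over $B(x_\lambda,d/2)$ (your step), and then, crucially, a second time in the $x_\ell$ variable over $B(x_\ell,a)$. That second averaging is what introduces the $L^q(B(x_\ell,a))$ norm of $F^{(n)}$; a H\"older inequality on $B(x_\ell,a)$ then produces the factor $a^{2/q'}$, and combined with the $d^{-2}$ from the mean value and the $d^{2/q}$ from $\|\Phi^{(n)}\|_{L^q}$ one lands exactly on $(a/d)^{2(1-1/q)}$. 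You flagged that the exponent $2(1-1/q)$ needs to be ``produced somehow'' but your proposal has no concrete mechanism for it; the second mean-value/H\"older step is that mechanism.

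Finally, the explicit constant $\ln\frac{1+2\eps_0}{1-2\eps_0}$ does not come from an annulus of the form $d-a\le r\le d+a$ as you guess, but from the correction term $J_\ell$ controlling the discrepancy between $\int_{\R^2\setminus B(x_\ell,d/2)}$ and $\int_{\R^2\setminus B(z,d/2)}$ when the reference center moves from $x_\ell$ to $z\in B(x_\ell,a)$ during the second averaging; the symmetric difference of the two excluded balls sits in the annulus $B(z,d/2+a)\setminus B(z,d/2-a)$, and Young's inequality with the $L^1$-norm of $|y|^{-2}$ on that annulus gives $\ln\frac{d+2a}{d-2a}$. So the shape of the log constant is again tied to the double-averaging structure, which your outline does not have.

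In summary: the geometric preliminaries you give are consistent with the paper's, but the two ingredients that actually make the lemma true at the claimed scaling -- the second mean-value replacement over $B(x_\ell,a)$ with H\"older, and the Calder\'on--Zygmund bound for the truncated dipole kernel -- are both absent, and the Schur/packing substitute you propose cannot recover them.
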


\begin{proof}
Let $n\in \mathbb{N}^{*}$. By definition \eqref{eq_A_iter} of $A^{(n+1)}_{\ell}$, the explicit expansion of $V^1$ (see Lemma~\ref{V1-decay}) and the scaling law \eqref{scaling-V}, we have that:
\begin{equation}\label{A-decompo}
A^{(n+1)}_{\ell}=
- a^2\sum_{\lambda\neq \ell}\nabla \Big(\dfrac{(\M_{\K} A^{(n)}_{\lambda} )\cdot x}{|x|^2}\Big) \Big\vert_{x=x_{\ell}-x_\lambda} - a^3 \sum_{\lambda\neq \ell} \frac{h_{1}(\frac{x_{\ell}-x_\lambda}a)}{|x_{\ell}-x_\lambda|^3}A^{(n)}_{\lambda},
\end{equation}
where $h_{1}$ is a $2\times 2$ matrix whose the lines are $h_{(1,0)}^T$ and $h_{(0,1)}^T$.

We begin by the last sum which can be easily bounded thanks to a pseudo discrete Young's convolution inequality inspired of \cite{GVH}:
\[
\forall q\geq 1 \qquad \Bigg(\sum_{\ell}\Big(\sum_{\lambda}|a_{\ell \lambda}||b_{\lambda}|\Big)^q\Bigg)^{1/q} \leq \max\Big( \sup_{\ell}\sum_{\lambda}|a_{\ell \lambda }|,\ \sup_{\lambda}\sum_{\ell}|a_{\ell \lambda }|\Big) \Big(\sum_{\ell}|b_{\ell}|^q\Big)^{1/q}.
\]
Indeed, we recall that $h_{1}$ is bounded, that $B(x_{\ell},d/2)\cap B(x_\lambda,d/2)=\emptyset$ for all $\ell\neq \lambda$. So, the previous inequality for $b_{\lambda} = A^{(n)}_{\lambda}$ and $a_{\ell \lambda }=|x_{\ell}-x_\lambda|^{-3}$ for $\ell \neq \lambda$ (otherwise $a_{\ell \ell}=0$) gives
\begin{align}
a^3 \Bigg(\sum_{\ell=1}^N \Big( \sum_{\lambda\neq \ell} \frac{|A^{(n)}_{\lambda}| }{|x_{\ell}-x_\lambda|^3} \Big)^q\Bigg)^{1/q}
\leq& a^3 \sup_{\ell}\sum_{\lambda\neq \ell} \frac{1 }{|x_{\ell}-x_\lambda|^3} \Big(\sum_{\ell=1}^N |A^{(n)}_{\ell}|^q\Big)^{1/q} \nonumber\\
\leq& a^3 \Big(C d^{-2} \int_{B(0,d/2)^c} |x|^{-3}\d x \Big)\Big(\sum_{\ell=1}^N |A^{(n)}_{\ell}|^q\Big)^{1/q} \nonumber\\
\leq& C \Big(\frac ad \Big)^{3} \Big(\sum_{\ell=1}^N |A^{(n)}_{\ell}|^q\Big)^{1/q}. \label{A-2nd}
\end{align}

This estimate is enough for the second sum of \eqref{A-decompo}, but we note that the first term is more singular. Indeed, a similar argument would yield:
\[
a^2\Bigg(\sum_{\ell=1}^N \Big( \sum_{\lambda\neq \ell} \frac{|A^{(n)}_{\lambda}| }{|x_{\ell}-x_\lambda|^2} \Big)^q\Bigg)^{1/q}
\leq C \Big(\frac ad \Big)^{2} |\ln d| \Big(\sum_{\ell=1}^N |A^{(n)}_{\ell}|^q\Big)^{1/q}
\]
which tends to infinity when $a,d\to 0$ (even with $a/d=\varepsilon_{0}$ fixed). So we provide a finer estimate by rewriting the first term as a convolution in terms of $\Phi^{(n)}$ (see \eqref{eq_Phi}) in order to apply a Cald\'eron-Zygmund inequality.

\medskip

We note that $x \mapsto (\M_{\K} A^{(n)}_{\lambda})\cdot (x_{\ell}-x)/{|x_{\ell}-x|^2}$ is harmonic in $B(x_\lambda,{d}/{2})$ for any $\lambda\neq \ell$. Hence according to the mean-value formula, we have
\[
a^2\sum_{\lambda\neq \ell}\nabla \Big(\dfrac{(\M_{\K} A^{(n)}_{\lambda} )\cdot x}{|x|^2}\Big) \Big\vert_{x=x_{\ell}-x_\lambda} 
= \frac{4a^2}{\pi d^2}\sum_{\lambda\neq \ell}\int_{B(x_\lambda,d/2)}\nabla \Big(\dfrac{(\M_{\K} A^{(n)}_{\lambda} )\cdot x}{|x|^2}\Big) \Big\vert_{x=x_{\ell}-y} \d y.
\]
Similarly, we remark that, for arbitrary $y \in B(x_{\lambda},d/2)$ with $\lambda \neq \ell,$ the mapping $x \mapsto (\M_{\K} A^{(n)}_{\ell})\cdot(x-y)/{|x-y|^2} $ 
is harmonic on $B(x_{\ell},a).$ This yields:
\begin{align}
a^2\sum_{\lambda\neq \ell}\nabla \Big(\dfrac{(\M_{\K} A^{(n)}_{\lambda} )\cdot x}{|x|^2}\Big) \Big\vert_{x=x_{\ell}-x_\lambda} 
&=\dfrac{4}{\pi^2 d^{2}} \sum_{\lambda\neq \ell} \int_{B(x_{\lambda},d/2)} \int_{B(x_{\ell},a)} \nabla \Big(\dfrac{(\M_{\K} A^{(n)}_{\lambda} )\cdot x}{|x|^2}\Big) \Big\vert_{x=z-y} \d z\d y \nonumber\\
&=\dfrac{1}{d^{2}} \int_{B(x_{\ell},a)} \int_{\R^2\setminus B(x_{\ell},d/2)} \Big(D_{z}\dfrac{z-y}{|z-y|^2}\Big)^T \M_{\K} \Phi^{(n)}(y) \d y\d z ,\nonumber\\
&:= I_{\ell}+J_{\ell}. \label{decompo2}
\end{align}
We denoted here $D_z$ for the gradient w.r.t. variable $z$ and we have splitted eventually the integral as follows:
\begin{align*}
I_{\ell}& :=d^{-2}\int_{B(x_{\ell},a)}\int_{\R^2\setminus B(z,d/2)} \Big(D_{z}\dfrac{z-y}{|z-y|^2}\Big)^T \M_{\K} \Phi^{(n)}(y) \d y\d z , \\
J_{\ell}&:=d^{-2}\int_{B(x_{\ell},a)}\int_{\R^2\setminus B(x_{\ell},d/2)} \Big(D_{z}\dfrac{z-y}{|z-y|^2}\Big)^T \M_{\K} \Phi^{(n)}(y) \d y\d z\\
& \qquad -d^{-2}\int_{B(x_{\ell},a)}\int_{\R^2\setminus B(z,d/2)} \Big(D_{z}\dfrac{z-y}{|z-y|^2}\Big)^T \M_{\K} \Phi^{(n)}(y) \d y\d z.
\end{align*}
We deal with $I_{\ell}$ first. We notice that $I_{\ell}$ can be regarded as an integral of a convolution: 
\begin{equation} \label{eq_Fn}
I_{\ell}(x)=d^{-2} \int_{B(x_{\ell},a)}F^{(n)}(z)\d z \quad\text{where}\quad
F^{(n)}(z):=\int_{\R^2\setminus B(0,d/2)} \Big(D_y \dfrac{y}{|y|^2}\Big)^T \M_{\K} \Phi^{(n)}(z-y) \d y.
\end{equation}
By H\"older inequality, we get
\[
|I_{\ell}|\leq d^{-2}(\sqrt{\pi}a)^{\frac{2}{q'}}\|F^{(n)}\|_{L^q(B(x_{\ell},a))},
\]
where $q'$ is the conjugate exponent of $q$. On the first hand, this entails:
\begin{equation}\label{Ij}
\sum_{\ell=1}^{N}|I_{\ell}|^q\leq d^{-2q}(\sqrt{\pi}a)^{\frac{2q}{q'}}\|F^{(n)}\|^q_{L^q(\R^2)}.
\end{equation}
On the other hand, we apply that $F^{(n)}$ is defined by an integral operator with kernel $\mathcal{K}(x,y):=\mathds{1}_{|x-y|\geq d/2}D({x-y}/{|x-y|^2}).$ This kernel enjoys the Cald\'eron-Zygmund condition so that (recall \eqref{eq_LpPhi}):
\begin{align}
\|F^{(n)}\|_{L^q(\R^2)} &\leq C(q)\|\Phi^{(n)}\|_{L^q(\R^2)} \notag \\ \label{Fk}
& \leq C(q)d^{\frac{2}{q}}\big(\sum_{\ell=1}^N|A^{(n)}_{\ell}|^q\big)^{1/q}.
\end{align} 
Combining \eqref{Fk} with (\ref{Ij}) yields that
\begin{equation}\label{I-est}
 \Big(\sum_{\ell=1}^{N}|I_{\ell}|^q\Big)^{1/q}\leq C(q)\big(\frac{a}{d}\big)^{\frac{2}{q'}} \Big(\sum_{\ell=1}^N|A^{(n)}_{\ell}|^q\Big)^{1/q}.
\end{equation}

\medskip

Now, we turn to deal with $J_{\ell}$. At first, we notice that for any $\ell=1,\ldots,N$ and any $z\in B(x_{\ell},a)$, 
\[
B(z,d/2) \Delta B(x_{\ell},d/2) \subset B(z,d/2+a) \setminus B(z,d/2-a) 
\]
(where $\Delta$ represents the symmetric difference between sets)
which implies that
\[
|J_{\ell}|
\leq \dfrac{C}{d^2}\int_{B(x_{\ell},a)}\int_{B(z,\frac{d}{2}+a)\setminus B(z,\frac{d}{2}-a)}\big|\Phi^{(n)}(y)\big|\frac{1}{|z-y|^2}\d y\d z = \dfrac{C}{d^2}\int_{B(x_{\ell},a)}G^{(n)}(z) \d z
\]
where we denote
\begin{equation} \label{eq_Gn}
G^{(n)}(z) :=\int_{B(z,\frac{d}{2}+a)\setminus B(z,\frac{d}{2}-a)}\big|\Phi^{(n)}(y)\big|\frac{1}{|z-y|^2}\d y =\int_{B(0,\frac{d}{2}+a)\setminus B(0,\frac{d}{2}-a)}\big|\Phi^{(n)}(z-y)\big|\frac{1}{|y|^2}\d y.
\end{equation}
By H\"older inequality, we obtain as previously that:
\begin{equation}\label{Jl}
\sum_{\ell=1}^N|J_{\ell}|^q\leq C d^{-2q}a^{\frac{2q}{q'}}\|G^{(n)}\|^q_{L^q(\R^2)}.
\end{equation}
By the standard Young's convolution inequality, we get by \eqref{eq_LpPhi}:
\begin{align*}
\|G^{(n)}\|_{L^q(\R^2)}\leq& \|\Phi^{(n)}\|_{L^q(\mathbb R^2)}\|\frac{1}{|z|^2}\mathbbm{1}_{B(0,\frac{d}{2}+a)\setminus B(0,\frac{d}{2}-a)}\|_{L^1(\mathbb R^2)}\\
\leq& C(q)\ln \left(\frac{d+2a}{d-2a}\right)d^{\frac{2}{q}}\Big(\sum_{\ell=1}^{N}|A^{(n)}_{\ell}|^q\Big)^{1/q}\\
\leq&C(q)\ln\left(\frac{1+2\eps_0}{1-2\eps_0}\right) d^{\frac 2q}\Big(\sum_{\ell=1}^{N}|A^{(n)}_{\ell}|^q\Big)^{1/q}.
\end{align*}
The last inequality is guaranteed by $a/d \leq \eps_0<1/2$.
Combining with (\ref{Jl}) we obtain that 
\begin{equation}\label{J-est}
\Big(\sum_{\ell=1}^{N}|J_{\ell}|^q \Big)^{1/q} 
\leq C\Big(\frac{a}{d}\Big)^{\frac{2}{q'}} \ln\left(\frac{1+2\eps_0}{1-2\eps_0}\right)\Big(\sum_{\ell=1}^N|A^{(n)}_{\ell}|^q\Big)^{1/q} .
\end{equation}

Putting together \eqref{A-2nd}, \eqref{I-est} and \eqref{J-est} in the two decompositions \eqref{A-decompo} and \eqref{decompo2} ends the proof of the lemma.
\end{proof}

Applying the previous lemma with $q=2$, we obtain that the method of reflections converges when $a/d < \varepsilon_0 \leq \varepsilon_{ref}$
where 
\begin{equation}\label{def-ref}
\varepsilon_{ref} : =\min \Big( \frac1{4 C_{ref}(2)}, \tilde\varepsilon_{ref}\Big) 
\end{equation}
with $\tilde\varepsilon_{ref}$ the unique solution in $(0,1/2)$ of the equation: 
\[
\tilde\varepsilon_{ref} \ln\left(\frac{1+2\tilde\eps_{ref}}{1-2\tilde\eps_{ref}}\right)= \dfrac{1}{4C_{ref}(2)}.
\]
Indeed, with this choice of $\varepsilon_{ref}$, it is clear that $\varepsilon_{0}<1/2$ and 
\[
\Big(\sum_{\ell=1}^N |A^{(n+1)}_{\ell}|^2\Big)^{1/2}\leq \frac12\Big(\sum_{\ell=1}^N |A^{(n)}_{\ell}|^2\Big)^{1/2} \leq \Big(\frac12\Big)^{n}\Big(\sum_{\ell=1}^N |A^{(1)}_{\ell}|^2\Big)^{1/2}.
\]
Let us recall \eqref{psi0-bd} which entails that (see the beginning of this section for notations $R_{f},M_{f}$):
\begin{equation}\label{est-A1}
\max_{\ell=1,\ldots,N} |A_{\ell}^{(1)}| = \max_{\ell=1,\ldots,N} |\nabla \psi_0(x_{\ell})| \leq C \|f\|^{1/2}_{L^{1}(\mathbb R^2)} \|f\|^{1/2}_{L^{\infty}(\mathbb R^2)}
\leq C R_{f}M_{f}.
\end{equation}
Even if in the previous argument, we used Lemma~\ref{Aapp} only for $q=2$, this lemma will be also used in Subsection~\ref{sect-stab} for $q\geq 2$ arbitrary large.

\medskip

The second step of the analysis is to obtain that the $(\psi^{(n)})_{n\in \mathbb N}$ yield good approximations of the exact solution $\psi_N.$ 
The proof of this result is based on two ingredients: a variational property of $\psi_N - \psi^{(n)}$ and a control of the second order expansion of $\psi^{(n)}$ on the $(\partial \K_{\ell}^a)_{\ell=1,\ldots,N}.$
This is the content of the next proposition. 

\begin{proposition}\label{1st term}
If $\eps_0 \leq \min(\eps_{ref},1/4)$, there exists a constant $C_{app}(R_{f},\eps_0)$ such that for any $n \geq 3,$ there holds:
\[
\|\psi^{(n)}-\psi_N\|_{\dot{H}^1(\mathcal{F}_N)}\leq C_{app}(R_{f},\eps_0)\left( \Big(\frac{a}{d}\Big)^{4} +a\right) M_{f}.
\]
\end{proposition}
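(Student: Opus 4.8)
The plan is to use that, by construction, $\psi^{(n)}=\psi_0+\sum_{k=1}^n\phi^{(k)}$ already meets every requirement of \eqref{eq_defpsiN-2} except being constant on the holes: it solves $\Delta\psi^{(n)}=f$ in $\mathcal{F}_N$ (each reflection is harmonic off its hole), it decays like $\psi_0$ at infinity (by Lemma~\ref{V1-decay}), and $\int_{\partial\K_\ell^a}\partial_n\psi^{(n)}\d s=0$ for every $\ell$ (Stokes for $\psi_0$, \eqref{Va-circ} for the reflections). Hence $r^{(n)}:=\psi^{(n)}-\psi_N$ is harmonic in $\mathcal{F}_N$, decays at infinity, and is flux-free on each $\partial\K_\ell^a$, and the only obstruction to $r^{(n)}\equiv 0$ is its non-constant trace on the holes. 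The first step is to pin that trace down. Unwinding \eqref{A1}--\eqref{eq_phi_iter} and Taylor-expanding each $\phi^{(k)}$ around $x_\ell$ on $\partial\K_\ell^a$, the constants are absorbed into the mean $\langle\psi^{(n)}\rangle_{\partial\K_\ell^a}$ while the linear parts telescope (the self-term $V^a[A_\ell^{(k)}](\cdot-x_\ell)$ cancels the linear error of step $k-1$, the cross-terms create $-A_\ell^{(k+1)}\cdot(x-x_\ell)$, to be cancelled next); what remains is
\[
g_\ell(x):=\psi^{(n)}(x)-\langle\psi^{(n)}\rangle_{\partial\K_\ell^a}=-A_\ell^{(n+1)}\cdot(x-x_\ell)+Q_{\ell}^{(n)}(x),\qquad x\in\partial\K_\ell^a,
\]
where $Q_\ell^{(n)}=Q_{\ell,0}+\sum_{k=1}^nQ_{\ell,k}$ gathers the second-order Taylor remainders of $\psi_0$ (that is $Q_{\ell,0}$) and of the cross-reflections at each level (that is $Q_{\ell,k}$). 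From Lemma~\ref{V1-decay} (decay of $D^2V^1$) and the scaling \eqref{scaling-V} one gets, on $\partial\K_\ell^a$,
\[
|Q_{\ell,k}|+a\,|\nabla Q_{\ell,k}|\lesssim a^{4}\sum_{\lambda\ne\ell}\frac{|A_\lambda^{(k)}|}{|x_\ell-x_\lambda|^{3}},
\]
while for $Q_{\ell,0}$ it is better to keep $|Q_{\ell,0}|+a|\nabla Q_{\ell,0}|\lesssim a\,\omega_\ell(a)$ with $\omega_\ell(a):=\operatorname{osc}_{B(x_\ell,2a)}\nabla\psi_0$, rather than to use \eqref{psi0-lips} directly.

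The second step is the energy identity. As $\Delta r^{(n)}=0$ in $\mathcal{F}_N$ and the contribution at infinity vanishes by the decay of $r^{(n)},\nabla r^{(n)}$, integrating by parts and using that $\psi_N$ is constant on $\partial\K_\ell^a$ and $\int_{\partial\K_\ell^a}\partial_n r^{(n)}\d s=0$ (so any constant may be subtracted from $r^{(n)}$ on each hole) gives
\[
\|\nabla r^{(n)}\|_{L^2(\mathcal{F}_N)}^2=\sum_{\ell=1}^N\int_{\partial\K_\ell^a}g_\ell\,\partial_n r^{(n)}\,\d s\le\sum_{\ell=1}^N\|g_\ell\|_{H^{1/2}(\partial\K_\ell^a)}\,\|\partial_n r^{(n)}\|_{H^{-1/2}(\partial\K_\ell^a)}.
\]
Since $\eps_0$ is small, the collars $B(x_\ell,a)\setminus\K_\ell^a\subset\mathcal{F}_N$ are pairwise disjoint and $r^{(n)}$ is harmonic on each; extending a boundary test function into such a collar and integrating by parts bounds $\|\partial_n r^{(n)}\|_{H^{-1/2}(\partial\K_\ell^a)}\lesssim\|\nabla r^{(n)}\|_{L^2(B(x_\ell,a)\setminus\K_\ell^a)}$ with a constant depending only on $\K$ (scale-invariance being verified by rescaling the hole to unit size). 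A Cauchy--Schwarz over $\ell$ and the disjointness absorb one factor $\|\nabla r^{(n)}\|_{L^2(\mathcal{F}_N)}$, so that
\[
\|\psi^{(n)}-\psi_N\|_{\dot{H}^1(\mathcal{F}_N)}\lesssim\Big(\sum_{\ell=1}^N\|g_\ell\|_{H^{1/2}(\partial\K_\ell^a)}^2\Big)^{1/2}.
\]

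It remains to bound the right-hand side using the first step; rescaling $\partial\K_\ell^a$ to $\partial\K$ gives $\|g_\ell\|_{H^{1/2}(\partial\K_\ell^a)}\lesssim\|g_\ell\|_{L^\infty(\partial\K_\ell^a)}+a\|\nabla g_\ell\|_{L^\infty(\partial\K_\ell^a)}$. The linear part of $g_\ell$ contributes $a|A_\ell^{(n+1)}|$; since $N\lesssim d^{-2}$ (disjoint balls $B(x_\ell,d/2)$ in a fixed neighbourhood of $K_{PM}$) and $\|(A_\ell^{(1)})_\ell\|_{\ell^2}\le\sqrt N\max_\ell|\nabla\psi_0(x_\ell)|\lesssim d^{-1}R_fM_f$ by \eqref{est-A1}, Lemma~\ref{Aapp} with $q=2$ --- whose contraction factor is, for $\eps_0\le\eps_{ref}$, both $\le\tfrac12$ and $\lesssim a/d$ --- yields, uniformly for $n\ge3$, $a\|(A_\ell^{(n+1)})_\ell\|_{\ell^2}\lesssim(a/d)^{4}M_f$ (it is here, and only here, that $n\ge3$ is used). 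For $Q_{\ell,0}$ one uses $\|D^2\psi_0\|_{L^p(\R^2)}\lesssim_p\|f\|_{L^p(\R^2)}\lesssim_{p,R_f}M_f$ ($p>2$ fixed), Morrey's inequality $\omega_\ell(a)\lesssim a^{1-2/p}\|D^2\psi_0\|_{L^p(B(x_\ell,2a))}$, and the disjointness of the $B(x_\ell,2a)$ (valid as $\eps_0\le 1/4$), whence by H\"older in $\ell$
\[
\sum_\ell\|g_\ell\|_{H^{1/2}(\partial\K_\ell^a)}^2\Big|_{Q_{\ell,0}}\lesssim a^2\sum_\ell\omega_\ell(a)^2\lesssim a^{4-4/p}N^{1-2/p}\|D^2\psi_0\|_{L^p(\R^2)}^2\lesssim a^{4-4/p}N^{1-2/p}M_f^2,
\]
so the $Q_{\ell,0}$ contribution is $\lesssim a(a/d)^{1-2/p}M_f\le aM_f$. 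Finally, for the $Q_{\ell,k}$ terms, Minkowski in $\ell$ followed by the discrete Young inequality of Lemma~\ref{Aapp} with kernel $|x_\ell-x_\lambda|^{-3}$ (row/column sums $\lesssim d^{-3}$ by $d$-separation) and the geometric decay $\sum_{k\ge1}\|(A_\lambda^{(k)})_\lambda\|_{\ell^2}\le 2\|(A_\lambda^{(1)})_\lambda\|_{\ell^2}\lesssim d^{-1}R_fM_f$ give $a^4\sum_k d^{-3}\|(A_\lambda^{(k)})_\lambda\|_{\ell^2}\lesssim(a/d)^{4}M_f$. Collecting the three pieces proves the proposition.

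The delicate point is the interaction between the normal-derivative bound and the summation over the holes. One must localise $\|\partial_n r^{(n)}\|_{H^{-1/2}(\partial\K_\ell^a)}$ to a fixed-shape collar so that its constant stays bounded as $a\to0$, and then arrange that every surviving factor $\sqrt N\sim d^{-1}$ be compensated: by the $a^2$ gain in the quadratic Taylor remainders $Q_{\ell,k}$, by the $a^{1-2/p}$ gain (via Morrey and Calder\'on--Zygmund) for $Q_{\ell,0}$, or by the geometric contraction of the $A_\ell^{(n)}$, which reaches the required order $(a/d)^{4}$ only once $n\ge3$. Making this bookkeeping rigorous --- in particular checking that the contraction rate in Lemma~\ref{Aapp} is genuinely $O(a/d)$ per step, not merely $<1$ --- is where the work lies; the remaining manipulations are routine.
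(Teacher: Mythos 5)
Your proof is correct, but it takes a genuinely different route from the paper's. The paper exploits the variational characterisation of $\psi_N$: since $\psi^{(n)}-\psi_N$ minimises the Dirichlet energy among functions sharing its tangential boundary trace, one has $\|\nabla(\psi^{(n)}-\psi_N)\|_{L^2}^2 \le \|\nabla w_N\|_{L^2}^2$ for any admissible lifting $w_N$; the authors build such a lifting explicitly with scaled cutoff functions and a Poincar\'e--Wirtinger inequality on the collars $(\K_\ell^a+B(0,a))\setminus\K_\ell^a$, and then split $\nabla\tilde w_{N,\ell}$ into $K_1$ (Taylor remainder of $\nabla\psi_0$, handled by the log-Lipschitz bound \eqref{psi0-lips}), $K_2^{(n)}$ (second-order Taylor remainders of cross-reflections at levels $1,\ldots,n-1$) and $K_3^{(n)}$ (the level-$n$ cross-reflections treated without any Taylor expansion). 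You instead close the Dirichlet-energy identity by $H^{1/2}$--$H^{-1/2}$ duality on $\partial\K_\ell^a$, localise $\|\partial_n r^{(n)}\|_{H^{-1/2}}$ to fixed-shape collars by a scale-invariant extension argument, and identify the boundary residual directly as $-A_\ell^{(n+1)}\cdot(x-x_\ell)$ plus quadratic remainders $Q_{\ell,k}$; you also replace the log-Lipschitz bound for the $\psi_0$-oscillation by Morrey embedding together with the Calder\'on--Zygmund estimate and the disjointness of the $B(x_\ell,2a)$, which is arguably sharper. Both arguments ultimately control the same two ingredients --- the $\ell^2$-decay of $(A_\ell^{(n)})_\ell$ from Lemma~\ref{Aapp} and the local oscillation of $\nabla\psi_0$ --- and yield the same $a+(a/d)^4$ rate, but the duality route makes the role of the boundary residual (and of the threshold $n\ge 3$) more transparent, at the price of the slightly more technical scale-invariance bookkeeping for the $H^{\pm 1/2}$ collar estimate that you correctly flag as the delicate point.
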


\begin{proof}
Recalling the definitions \eqref{eq_defpsiN-2} of $\psi_N$ and \eqref{eq_phi_iter} of $\psi^{(n)}$ (see also the definition \eqref{reflection-a} of $V^a$ which verifies \eqref{Va-circ}), we note that $\psi^{(n)}-\psi_N$ belongs to $\dot H^1(\mathcal{F}_{N})$ (see the behavior at infinity \eqref{psi0-inf}, \eqref{psiN-inf} and Lemma~\ref{V1-decay}) and satisfies
\begin{equation}\label{psin-psiN}
 \left\{
 \begin{array}{ll}
 \Delta (\psi^{(n)}-\psi_N)=0, & \mathrm{in}~~\mathcal{F}_N,\\
 \psi^{(n)}-\psi_N=w^{(n)}, & \mathrm{on} ~~\partial \K_{\ell}^a,~~\forall\, \ell=1,\ldots,N,\\
 \displaystyle \int_{\partial \K_{\ell}^a} \partial_n (\psi^{(n)}-\psi_N) \d \sigma=0, & \forall\, \ell=1,\ldots,N,
 \end{array}
\right.
\end{equation}
where, for any $\ell= 1,\ldots,N$ and $x\in \partial \K_{\ell}^a$ we have defined
\begin{align*}
 w^{(n)}(x) :=& \psi_0(x) + \sum_{j=1}^{n}\sum_{\lambda =1}^N V^{a}[A^{(j)}_{\lambda}](x-x_\lambda) - \psi_{N,\ell}^*\\
= & \psi_0(x) + \sum_{j=1}^{n} \Big( A^{(j)}_{\ell}\cdot (x-x_{\ell})+ \sum_{\lambda \neq \ell} V^{a}[A^{(j)}_{\lambda}](x-x_\lambda) \Big) - \psi_{N,\ell}^*.
\end{align*}

The first argument of this proof is to notice that $\psi^{(n)}-\psi_N$ minimizes the $\dot H^1(\mathcal{F}_{N})$ on the set of $C^1$ functions which satisfy this boundary condition up to a constant. Namely, for any $w_{N}\in C^1_{c}(\overline{\mathcal{F}_{N}})$ which verifies
\begin{equation} \label{wN}
\partial_{\tau }(w_{N}-w^{(n)})=0 \quad \text{on } \partial\mathcal{F}_{N},
\end{equation}
we get by two integrations by parts and system \eqref{psin-psiN}
\begin{align*}
\int_{\mathcal F_N} |\nabla (\psi^{(n)} - \psi_N)|^2& = \int_{\partial\mathcal F_N} \partial_{n}((\psi^{(n)} - \psi_N)) w^{(n)}\d \sigma= \int_{\partial\mathcal F_N} \partial_{n}((\psi^{(n)} - \psi_N)) w_{N}\d \sigma\\
&=\int_{\mathcal F_N} (\nabla (\psi^{(n)} - \psi_N))\cdot \nabla w_{N},
\end{align*}
hence by the Cauchy–Schwarz inequality:
\begin{equation} \label{eq_clabase}
\int_{\mathcal F_N} |\nabla (\psi^{(n)} - \psi_N)|^2 \leq \int_{\mathcal F_N} |\nabla w_N|^{2}.
\end{equation}

Therefore, we create now a lifting (up to constants) $w_{N}$ of the boundary value $w^{(n)}$ and we estimate its $\dot H^1(\mathcal{F}_{N})$ norm. 
First, we define a cutoff function $\chi\in C_c^{\infty}(\R^2)$ such that 
\[
\chi\equiv1\text{ in }\K + B(0,1/2)\quad \text{and} \quad \chi\equiv0\text{ in }\R^2 \setminus (\K + B(0,1)).
\]
Second, we set for any $\ell=1,\dots,N$
\[
\tilde w_{N,\ell} (x):= \psi_0(x) + \sum_{j=1}^{n} \Big( A^{(j)}_{\ell}\cdot (x-x_{\ell})+ \sum_{\lambda \neq \ell} V^{a}[A^{(j)}_{\lambda}](x-x_\lambda) \Big) 
\]
and by $\hat w_{N,\ell}$ the mean value of $\tilde w_{N,\ell}$ on $(\K_{\ell}^a +B(0,a))\setminus \K_{\ell}^a$. We finally define
\[
w_{N}(x):=\sum_{\ell=1}^N (\tilde w_{N,\ell}(x) -\hat w_{N,\ell} )\chi\Big(\frac{x-x_{\ell}}{a}\Big),
\]
which clearly verifies \eqref{wN}. Therefore, by \eqref{eq_clabase}, our proof reduces now to estimate the $L^2$ norm of $\nabla w_{N}$, which decomposes as follows:
\begin{align*}
\|\nabla w_{N} \|_{L^2(\mathcal{F}_{N})}^2 
\leq& 2\Big\| \frac1a\sum_{\ell=1}^N (\tilde w_{N,\ell}(x) -\hat w_{N,\ell} ) (\nabla\chi)\Big(\frac{x-x_{\ell}}{a}\Big) \Big\|_{L^2(\mathcal{F}_{N})}^2 +2 \Big\| \sum_{\ell=1}^N \nabla \tilde w_{N,\ell}(x) \chi\Big(\frac{x-x_{\ell}}{a}\Big) \Big\|_{L^2(\mathcal{F}_{N})}^2 \\
\leq& \frac{2C_{\chi}}{a^2}\sum_{\ell=1}^N \int_{(\K_{\ell}^a +B(0,a))\setminus \K_{\ell}^a} |\tilde w_{N,\ell}(x) -\hat w_{N,\ell} |^2\d x
+ 2 \sum_{\ell=1}^N \int_{(\K_{\ell}^a +B(0,a))\setminus \K_{\ell}^a} | \nabla \tilde w_{N,\ell}(x) |^2\d x
\end{align*}
where we have used that $x \mapsto \chi((x-x_{\ell})/a)$ have disjoint supports. By a standard change of variable $y=(x-x_{\ell})/a$, a Poincar\'e-Wirtinger inequality on the domain $(\K +B(0,1))\setminus \K$ entails that
\[
\|\nabla w_{N} \|_{L^2(\mathcal{F}_{N})}^2 
\leq (2C_{\chi} C_{PW}+2) \sum_{\ell=1}^N \int_{(\K_{\ell}^a +B(0,a))\setminus \K_{\ell}^a} | \nabla \tilde w_{N,\ell}(x) |^2\d x.
\]
With the expression \eqref{A1} of $A_{\ell}^{(j)}$ and \eqref{eq_A_iter}, we compute
\begin{align*}
\nabla \tilde w_{N,\ell} (x)=& \nabla\psi_0(x) +\sum_{j=1}^{n} \Big( A^{(j)}_{\ell}+ \sum_{\lambda \neq \ell} \nabla V^{a}[A^{(j)}_{\lambda}](x-x_\lambda) \Big) \\
=&\nabla \psi_0(x)-\nabla \psi_{0}(x_{\ell} ) + \sum_{j=1}^{n-1} \sum_{\lambda \neq \ell} \Big( \nabla V^{a}[A^{(j)}_{\lambda}](x-x_\lambda)- \nabla V^{a}[A^{(j)}_{\lambda}](x_{\ell}-x_\lambda) \Big) \\
&+ \sum_{\lambda \neq \ell} \nabla V^{a}[A^{(n)}_{\lambda}](x-x_\lambda),
\end{align*}
hence
\begin{equation}\label{wN-decompo}
 \|\nabla w_{N} \|_{L^2(\mathcal{F}_{N})}^2 \leq 3(2C_{\chi} C_{PW}+2) (K_{1}+K_{2}^{(n)}+K_{3}^{(n)}) ,
\end{equation}
where
\begin{align*}
K_{1} &:= \sum_{\ell=1}^N \int_{(\K_{\ell}^a +B(0,a))\setminus \K_{\ell}^a} |\nabla \psi_0(x)-\nabla \psi_{0}(x_{\ell} ) |^2\d x,\\
K_{2}^{(n)} &:= \sum_{\ell=1}^N \int_{(\K_{\ell}^a +B(0,a))\setminus \K_{\ell}^a} \Big|\sum_{j=1}^{n-1} \sum_{\lambda \neq \ell} \nabla V^{a}[A^{(j)}_{\lambda}](x-x_\lambda)- \nabla V^{a}[A^{(j)}_{\lambda}](x_{\ell}-x_\lambda) \Big|^2\d x, \\
K_{3}^{(n)} &:= \sum_{\ell=1}^N \int_{(\K_{\ell}^a +B(0,a))\setminus \K_{\ell}^a} \Big| \sum_{\lambda \neq \ell} \nabla V^{a}[A^{(n)}_{\lambda}](x-x_\lambda) \Big|^2\d x.
\end{align*}

\medskip

We begin by $K_{3}^{(n)}$ because the analysis is almost the same as in the proof of Lemma~\ref{Aapp}. The explicit expansion of $V^1$ (see Lemma~\ref{V1-decay}) and the scaling law \eqref{scaling-V} give:
\begin{equation*}
\sum_{\lambda \neq \ell} \nabla V^{a}[A^{(n)}_{\lambda}](x-x_\lambda)
 =a^2\sum_{\lambda\neq \ell}\nabla \Big(\dfrac{(\M_{\K} A^{(n)}_{\lambda} )\cdot (x-x_{\lambda})}{|x-x_{\lambda}|^2}\Big) 
 + a^3 \sum_{\lambda\neq \ell} \frac{h_{1}(\frac{x-x_\lambda}a)}{|x-x_\lambda|^3}A^{(n)}_{\lambda}. 
\end{equation*}
For all $x \in \K_{\ell}^a +B(0,a)$ and $\lambda\neq \ell$, we use that $|x-x_{\ell}| \leq 3a \leq 3d/4 \leq 3 |x_{\ell}-x_{\lambda}|/4$ provided that $\varepsilon_{0}\leq 1/4$ to state that $|x-x_{\lambda}|\geq |x_{\ell}-x_{\lambda}|/4$, hence
\[
\sum_{\ell=1}^N \int_{(\K_{\ell}^a +B(0,a))\setminus \K_{\ell}^a} \Big| a^3 \sum_{\lambda\neq \ell} \frac{h_{1}(\frac{x-x_\lambda}a)}{|x-x_\lambda|^3} A^{(n)}_{\lambda} \Big|^2\d x \leq C a^8 \sum_{\ell=1}^N \Big( \sum_{\lambda\neq \ell} \frac{|A^{(n)}_{\lambda} | }{|x_{\ell} -x_\lambda|^3} \Big)^2 
\]
where we have used that $h_{1}$ is bounded. Like for the second sum of \eqref{A-decompo}, we state that the pseudo discrete Young's convolution inequality gives
\[
 \sum_{\ell=1}^N \int_{(\K_{\ell}^a +B(0,a))\setminus \K_{\ell}^a} \Big| a^3 \sum_{\lambda\neq \ell} \frac{h_{1}(\frac{x-x_\lambda}a)}{|x-x_\lambda|^3} A^{(n)}_{\lambda}\Big|^2\d x \leq C \frac{a^8}{d^6} \sum_{\ell=1}^N |A^{(n)}_{\ell}|^2.
\]

Concerning the other part of $K_{3}^{(n)}$, we notice that for any $x \in \K_{\ell}^a +B(0,a)$ and $\lambda\neq \ell$, $y \mapsto {a^2(\M_{\K}A^{(n)}_{\lambda})\cdot(x-y)}/{|x-y|^2}$ is harmonic in $B(x_\lambda,d/2)$. Hence by applying mean-value formula, we obtain that
\begin{align*}
a^2\sum_{\lambda\neq \ell}\nabla \Big(\dfrac{(\M_{\K} A^{(n)}_{\lambda} )\cdot (x-x_{\lambda})}{|x-x_{\lambda}|^2}\Big) 
&=\frac{4a^2}{\pi d^2}\sum_{\lambda\neq \ell}\int_{B(x_\lambda,d/2)}\nabla \Big( \dfrac{(\M_{\K}A^{(n)}_{\lambda} )\cdot (x-y)}{|x-y|^2}\Big) \d y \\
&=\frac{\pi a^2}{ d^2} \int_{\R^2\setminus B(x_{\ell},d/2)} \Big(D_{x}\dfrac{x-y}{|x-y|^2}\Big)^T \M_{\K} \Phi^{(n)}(y) \d y
\end{align*}
with $\Phi^{(n)}$ defined in \eqref{eq_Phi}. So we follow the proof of Lemma~\ref{Aapp} -- and keep the conventions \eqref{eq_Fn} and \eqref{eq_Gn} to define
the functions $F_n$ and $G_n$ -- to state that
\[
 \int_{(\K_{\ell}^a +B(0,a))\setminus \K_{\ell}^a} \Big| a^2\sum_{\lambda\neq \ell}\nabla \Big(\dfrac{(\M_{\K} A^{(n)}_{\lambda} )\cdot (x-x_{\lambda})}{|x-x_{\lambda}|^2}\Big) \Big|^2\d x \leq \tilde I_{\ell} + \tilde J_{\ell}
\]
where, on the one hand, we write
\begin{align*}
 \tilde I_{\ell} &= 2 \int_{(\K_{\ell}^a +B(0,a))\setminus \K_{\ell}^a} \Big| \frac{\pi a^2}{ d^2} F^{(n)}(x) \Big|^2\d x \\ 
 \sum_{\ell=1}^N \tilde I_{\ell} &\leq \frac{2\pi^2 a^4}{ d^4} \| F^{(n)} \|^2_{L^2}(\R^2) \leq C \frac{2\pi^2 a^4}{ d^2} \sum_{\ell=1}^N |A^{(n)}_{\ell}|^2;
\end{align*}
and, on the other hand, we compute
\begin{align*}
 \tilde J_{\ell} &\leq 2 \int_{(\K_{\ell}^a +B(0,a))\setminus \K_{\ell}^a} \Big( \frac{\pi a^2}{ d^2} G^{(n)}(x) \Big)^2\d x \\ 
 \sum_{\ell=1}^N \tilde J_{\ell}& \leq \frac{2\pi^2 a^4}{ d^4} \| G^{(n)} \|^2_{L^2}(\R^2) \leq C \frac{2\pi^2 a^4}{ d^2} \Big(\ln\Big(\frac{1+2\eps_0}{1-2\eps_0}\Big)\Big)^2 \sum_{\ell=1}^N |A^{(n)}_{\ell}|^2.
\end{align*}
Putting together the previous estimates, we have proved that
\[
 K_{3}^{(n)} \leq C\frac{ a^4}{ d^2} \Big(1+\ln\Big(\frac{1+2\eps_0}{1-2\eps_0}\Big)\Big)^2 \sum_{\ell=1}^N |A^{(n)}_{\ell}|^2.
\]
Applying Lemma~\ref{Aapp} and recalling our choice \eqref{def-ref} of $\varepsilon_{ref}$, we conclude by \eqref{est-A1} that
\begin{align}
 K_{3}^{(n)} 
 &\leq 
 C \frac{ a^4}{ d^2} \Big(1+\ln\Big(\frac{1+2\eps_0}{1-2\eps_0}\Big)\Big)^2
 \Bigg[ C_{ref}(2)\frac{a}{d} \Big(1+\ln\Big( \frac{1+2\eps_0}{1-2\eps_0}\Big)\Big) \Bigg]^{2(n-1)} R_{f}^2M_{f}^2 N \nonumber\\
&\leq C \frac{ a^8}{ d^6} \Big(1+\ln\Big(\frac{1+2\eps_0}{1-2\eps_0}\Big)\Big)^6
 \Bigg[ C_{ref}(2)\varepsilon_{ref} \Big(1+\ln\Big( \frac{1+2\varepsilon_{ref}}{1-2\varepsilon_{ref}}\Big)\Big) \Bigg]^{2n-6} R_{f}^2M_{f}^2 N \nonumber \\
&\leq C \Big(\frac{ a}{ d}\Big)^8 \Big(1+\ln\Big(\frac{1+2\eps_0}{1-2\eps_0}\Big)\Big)^6
 R_{f}^2M_{f}^2 , \label{est-K3}
\end{align}
for any $n\geq 3$, where we have used that our assumption\eqref{main_assumption} on $\mathcal{F}_{N}$ entails $N\leq C(K_{PM}) d^{-2}$. Actually, we note that $K_{3}^{(n)}$ could be smaller if necessary because we could extract additional power of $(a/d)$ in the previous argument.

\medskip 

Concerning $K_{1}$, we simply use the log-lipschitz estimate of $\nabla \psi_{0}$ \eqref{psi0-lips} to write for any $x\in (\K_{\ell}^a +B(0,a))\setminus \K_{\ell}^a$
\[
|\nabla\psi_0(x)-\nabla\psi_0(x_{\ell})| \leq C (1+ R_{f}^2)M_{f} |x-x_{\ell}| | \ln|x-x_{\ell}| | \leq C (1+ R_{f}^2)M_{f} |x-x_{\ell}|^{3/4} \leq C (1+ R_{f}^2)M_{f} a^{3/4},
\]
hence
\begin{equation}\label{est-K1}
K_{1} \leq C (1+ R_{f}^2)^2M_{f}^2 a^{3/2} a^2 N\leq C (1+ R_{f}^2)^2M_{f}^2 a^{3/2} \frac{a^2}{d^2} \leq C (1+ R_{f}^2)^2M_{f}^2 \Big( a^{2} + \Big( \frac{a}{d}\Big)^8 \Big).
\end{equation}

Using Lemma~\ref{V1-decay} with $m_{1}+m_{2}=2$ and the scaling law \eqref{scaling-V}, we consider now $K_{2}^{(n)}$: we have for any $x\in(\K_{\ell}^a +B(0,a))\setminus \K_{\ell}^a$
\begin{align*}
 \Big|\sum_{j=1}^{n-1} \sum_{\lambda \neq \ell} \nabla V^{a}[A^{(j)}_{\lambda}](x-x_\lambda)- \nabla V^{a}[A^{(j)}_{\lambda}](x_{\ell}-x_\lambda) \Big|
 & \leq \sum_{j=1}^{n-1} \sum_{\lambda \neq \ell} 3a \max_{\K_{\ell}^a +B(0,a)}|\nabla^2 V^{a}[A_{\lambda}^{j}](x-x_{\lambda})| 
 \\
& \leq
Ca^3\sum_{j=1}^{n-1}\sum_{\lambda \neq \ell}\frac{|A^{(j)}_{\lambda}|}{|x_{\ell}-x_\lambda|^3},
\end{align*}
where we have again used that $|x-x_{\lambda}|\geq |x_{\ell}-x_{\lambda}|/4$ for all $x \in \K_{\ell}^a +B(0,a)$ and $\lambda\neq \ell$ (provided that $\varepsilon_{0}\leq 1/4$). As in the beginning of the proof of Lemma~\ref{Aapp}, we apply the pseudo discrete Young's convolution inequality for $b_{\lambda} = \sum_{j=1}^{n-1} |A^{(n)}_{\lambda}|$ and $a_{\ell \lambda }=|x_{\ell}-x_\lambda|^{-3}$ for $\ell \neq \lambda$ (otherwise $a_{\ell \ell}=0$) to get
\begin{align*}
(K_{2}^{(n)})^{1/2} &\leq Ca^4 \Bigg( \sum_{\ell=1}^N \Big(\sum_{\lambda \neq \ell}\frac{\sum_{j=1}^{n-1} |A^{(j)}_{\lambda}|}{|x_{\ell}-x_\lambda|^3} \Big)^2 \Bigg)^{1/2} 
\leq Ca^4 \sup_{\ell}\sum_{\lambda\neq \ell} \frac{1 }{|x_{\ell}-x_\lambda|^3} \Big(\sum_{\ell=1}^N \Big(\sum_{j=1}^{n-1} |A^{(j)}_{\lambda}|\Big)^2\Big)^{1/2} \\
&\leq Ca^4 \Big(C d^{-2} \int_{B(0,d/2)^c} |x|^{-3}\d x \Big)\sum_{j=1}^{n-1} \Big(\sum_{\ell=1}^N |A^{(j)}_{\ell}|^2\Big)^{1/2} 
\leq C a\Big(\frac ad \Big)^{3} \sum_{j=1}^{n-1} \frac1{2^{j-1}} \Big(\sum_{\ell=1}^N |A^{(1)}_{\ell}|^2\Big)^{1/2},
\end{align*}
where we have used Lemma~\ref{Aapp} together with the definition \eqref{def-ref} of $\varepsilon_{ref}$. Therefore, we conclude by \eqref{est-A1} and that $N\leq C(K_{PM}) d^{-2}$ that
\begin{equation}\label{est-K2}
 K_{2}^{(n)} \leq C \Big(\frac ad \Big)^{8} R_{f}^2 M_{f}^2 .
\end{equation}

\medskip

We are now able to conclude this proof: using the variational property \eqref{eq_clabase} with the decomposition \eqref{wN-decompo}, then the estimates \eqref{est-K3}-\eqref{est-K2} gives for all $n\geq 3$ and $\varepsilon_{0}\leq \min(\eps_{ref},1/4)$:
\[
\int_{\mathcal F_N} |\nabla (\psi^{(n)} - \psi_N)|^2 \leq C (1+ R_{f}^2)^2 \Big(1+\ln\Big(\frac{1+2\eps_0}{1-2\eps_0}\Big)\Big)^6
 M_{f}^2 \Big( a^{2} + \Big( \frac{a}{d}\Big)^8 \Big)
\]
where $C$ depends only on $K_{PM}$ and $\K$.
\end{proof}

 In the above lemma, a part of the error between $\psi_N$ and its approximation $\psi^{(n)}$ is measured by the size $a$ of the holes. 
To be able to proceed with our method we thus need to show that the size of the holes is small when the distribution of the holes is approximated by a continuous volume fraction. So, we provide in the next lemma a control of the radius size $a$ of the holes with respect to the distance in $W^{-1,p}(\mathbb R^2)$ between the indicator function $\mu$ of $\mathcal{F}_{N}$ (defined in \eqref{def-mu}) and the limit volume fraction $k$.

\begin{lemma} \label{lem_def_eps_s}
If $\varepsilon_0 \leq 1/2$ and $p \in (1,\infty)$, there exists a constant $C(p,\varepsilon_0)$ depending only on $p$ and $\varepsilon_0$ such that
\[
a \leq C(p,\varepsilon_0) \|\mu - k \|^{\frac{p}{p+2}}_{W^{-1,p}(\mathbb R^2)}.
\]
\end{lemma}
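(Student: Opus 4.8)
The plan is to bound $\|\mu-k\|_{W^{-1,p}(\mathbb R^2)}$ from below by pairing $\mu-k$ against a single, suitably rescaled bump function concentrated on one hole. The heuristic is that $\mu$ carries a lump of mass $\pi a^2$ on $B(x_1,a)$ whereas $k$ is merely a bounded function with small $L^\infty$ norm, so such a test function detects the size $a$.

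First I would fix, once and for all, a cutoff $\psi\in C_c^\infty(\mathbb R^2)$ with $0\le\psi\le1$, $\psi\equiv1$ on $B(0,1)$ and $\int_{\mathbb R^2}\psi\le 2\pi$ (take e.g. $\psi$ supported in $B(0,\sqrt2)$). For an arbitrary hole centre, say $x_1$, set $\varphi(x):=\psi\big((x-x_1)/a\big)$, which belongs to $C_c^\infty(\mathbb R^2)$ and equals $1$ on $B(x_1,a)$. Keeping only the index $\ell=1$ in the sum \eqref{def-mu} and using $\mu,\varphi\ge0$ gives $\int_{\mathbb R^2}\mu\,\varphi\ge\int_{B(x_1,a)}\varphi=\pi a^2$, while $\int_{\mathbb R^2}k\,\varphi\le\|k\|_{L^\infty}\int_{\mathbb R^2}\varphi\le 2\pi\varepsilon_0^2 a^2$. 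Since $\varepsilon_0\le1/2$, this yields the lower bound
\[
\int_{\mathbb R^2}(\mu-k)\,\varphi\ \ge\ \pi a^2\,(1-2\varepsilon_0^2)\ \ge\ \frac{\pi}{2}\,a^2 .
\]

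Then I would estimate $\|\varphi\|_{W^{1,p'}(\mathbb R^2)}$, with $p'$ the exponent conjugate to $p$. From $\nabla\varphi(x)=a^{-1}(\nabla\psi)\big((x-x_1)/a\big)$ and the change of variables $y=(x-x_1)/a$ one gets $\|\varphi\|_{L^{p'}}=a^{2/p'}\|\psi\|_{L^{p'}}$ and $\|\nabla\varphi\|_{L^{p'}}=a^{2/p'-1}\|\nabla\psi\|_{L^{p'}}$; since $a$ stays bounded (the holes are contained in the fixed compact $K_{PM}$), this gives $\|\varphi\|_{W^{1,p'}(\mathbb R^2)}\le C(p,\varepsilon_0)\,a^{2/p'-1}$. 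Combining this with the lower bound above through the duality $W^{-1,p}(\mathbb R^2)=\big(W^{1,p'}(\mathbb R^2)\big)^{*}$ yields
\[
\|\mu-k\|_{W^{-1,p}(\mathbb R^2)}\ \ge\ \frac{\displaystyle\int_{\mathbb R^2}(\mu-k)\,\varphi}{\|\varphi\|_{W^{1,p'}(\mathbb R^2)}}\ \ge\ c(p,\varepsilon_0)\,\frac{a^2}{a^{2/p'-1}}\ =\ c(p,\varepsilon_0)\,a^{3-2/p'}.
\]
Since $3-2/p'=3-2(1-1/p)=1+2/p=(p+2)/p$, this reads $\|\mu-k\|_{W^{-1,p}(\mathbb R^2)}\ge c(p,\varepsilon_0)\,a^{(p+2)/p}$, and solving for $a$ gives precisely the claimed inequality $a\le C(p,\varepsilon_0)\,\|\mu-k\|_{W^{-1,p}(\mathbb R^2)}^{p/(p+2)}$.

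I do not expect any real obstacle here. The two delicate points are: (i) choosing $\psi$ with total mass strictly below $\pi/\varepsilon_0^2$, so that the contribution of $k$ cannot absorb that of $\mu$ — this is the only place where the smallness assumption $\varepsilon_0\le1/2$ intervenes; and (ii) the exponent bookkeeping $3-2/p'=(p+2)/p$. Note that the disjointness of the balls $B(x_\ell,a)$ plays no role whatsoever: one simply retains a single nonnegative term of the sum defining $\mu$ and discards the others.
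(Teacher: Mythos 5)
Your proof is correct and follows essentially the same strategy as the paper's: pair $\mu-k$ with a smooth bump concentrated near a hole, bound the pairing from below by $\gtrsim a^2$ (the lump carried by $\mu$ minus the absorbable contribution of $k$), and convert via $W^{-1,p}\mbox{--}W^{1,p'}$ duality, yielding the exponent $(p+2)/p$. The one difference is the scale of the test function: the paper takes a plateau at the intermediate radius $\delta=a/(2\varepsilon_0)$, which first needs the observation that the dilated balls $B(x_\ell,\delta)$ are pairwise disjoint (this uses $d\ge a/\varepsilon_0$ and $\varepsilon_0\le1/2$) so that $\mu$ restricted to $B(x_\ell,\delta)$ is exactly $\mathds{1}_{B(x_\ell,a)}$; you instead use a bump at scale $a$ and observe, correctly, that since $\mu$ and $\varphi$ are nonnegative one can simply keep the single term $\ell=1$ as a lower bound, so no disjointness is needed. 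Both choices give the same power $a^{2/p'-1}$ for the test-function norm (since $\delta\sim a$), hence the same final exponent; your version is marginally more elementary, at the negligible cost of a slightly smaller constant in front of $a^2$ ($\tfrac\pi2 a^2$ versus the paper's $\tfrac{3\pi}{4}a^2$). The exponent bookkeeping $3-2/p'=(p+2)/p$ checks out.
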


This lemma gives obviously the following corollary of Proposition~\ref{1st term}.

\begin{corollary}\label{1st term-bis}
If $\eps_0 \leq \min(\eps_{ref},1/4)$, given $p \in (1,\infty)$, there exists a constant $C_{app}(R_{f},p,\eps_0)$ such that for any $n \geq 3,$ there holds:
\[
\|\psi^{(n)}-\psi_N\|_{\dot{H}^1(\mathcal{F}_N)}\leq C_{app}(R_{f},p,\eps_0) M_{f}\left( \Big(\frac{a}{d}\Big)^{4} +\|\mu - k\|^{\frac{p}{p+2}}_{W^{-1,p}(\mathbb R^2)}\right).
\]
\end{corollary}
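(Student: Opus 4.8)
The plan is to read off Corollary~\ref{1st term-bis} directly from Proposition~\ref{1st term} by using Lemma~\ref{lem_def_eps_s} to re-express the hole size $a$ in terms of the discrepancy between the discrete configuration and the limiting volume fraction. Proposition~\ref{1st term} already delivers, for $\eps_0\leq\min(\eps_{ref},1/4)$ and any $n\geq 3$,
\[
\|\psi^{(n)}-\psi_N\|_{\dot{H}^1(\mathcal{F}_N)}\leq C_{app}(R_{f},\eps_0)\,M_{f}\Big(\big(\tfrac{a}{d}\big)^{4}+a\Big),
\]
so the only thing left to do is to trade the unwanted factor $a$. Lemma~\ref{lem_def_eps_s} provides exactly $a\leq C(p,\eps_0)\,\|\mu-k\|^{\frac{p}{p+2}}_{W^{-1,p}(\mathbb R^2)}$; inserting this bound and merging $C_{app}(R_f,\eps_0)$ with $\max(1,C(p,\eps_0))$ into a single constant $C_{app}(R_f,p,\eps_0)$ produces the claimed estimate. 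There is nothing further to prove for the corollary itself — this is why the authors call it obvious — and the dependence on $p$ enters only through this constant.

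For completeness I record how I would establish the single genuinely new ingredient, Lemma~\ref{lem_def_eps_s} (which may in any case be assumed here). The idea is to localize $\mu-k$ on one hole and test against a rescaled bump. Since $d\geq a/\eps_0\geq 2a$ whenever $\eps_0\leq 1/2$, the balls $B(x_\ell,a)$ are pairwise disjoint, so $\mu\equiv 1$ on each of them. Fix one, say $B(x_1,a)$, take $\chi_0\in C_c^\infty(B(0,1))$ with $0\leq\chi_0\leq 1$ and $\chi_0\equiv 1$ on $B(0,3/4)$, and set $\varphi(x)=\chi_0((x-x_1)/a)$. Writing $p'$ for the conjugate exponent of $p$, a change of variables gives $\|\varphi\|_{W^{1,p'}(\mathbb R^2)}\leq C(p)\,a^{2/p'-1}$ for $a$ small, while, using $\|k\|_{L^\infty}\leq\eps_0^2\leq 1/4$,
\[
\langle\mu-k,\varphi\rangle=\int_{\mathbb R^2}\mu\,\varphi\,\d x-\int_{\mathbb R^2}k\,\varphi\,\d x\geq \big|B(x_1,\tfrac34 a)\big|-\|k\|_{L^\infty}\,\big|B(x_1,a)\big|=\pi a^2\Big(\tfrac{9}{16}-\|k\|_{L^\infty}\Big)\geq \tfrac{5\pi}{16}\,a^2.
\]
By duality $\|\mu-k\|_{W^{-1,p}(\mathbb R^2)}\geq c(p)\,a^{2}/a^{2/p'-1}=c(p)\,a^{3-2/p'}=c(p)\,a^{(p+2)/p}$, which rearranges to the asserted control of $a$.

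I do not expect any real obstacle at this stage: all the analytic effort — the convergence of the method of reflections in Lemma~\ref{Aapp} and the variational estimate for $\psi^{(n)}-\psi_N$ in Proposition~\ref{1st term} — has already been carried out, and the corollary is merely a reformulation adapted to the continuous volume fraction. The only points meriting a word of care are the disjointness of the holes (so that $\mu\equiv 1$ on each $B(x_\ell,a)$, hence the lower bound in the display above), the bookkeeping of the $p$-dependence of the final constant, and the fact that the exponent $\frac{p}{p+2}$ worsens towards $1/3$ as $p\downarrow 1$; the latter is harmless since in Theorem~\ref{main-elliptic} the exponent $p$ is a free parameter and this error term is always paired with the purely geometric contribution $(a/d)^4$.
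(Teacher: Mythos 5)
Your proposal is correct and takes the same route as the paper: the corollary is read off from Proposition~\ref{1st term} via Lemma~\ref{lem_def_eps_s}, exactly as you observe. Your duality proof of that lemma is also essentially the paper's, differing only in the cosmetic choice of a test function supported on a ball of radius $a$ (with the plateau on $B(x_1,3a/4)$) rather than the paper's radius $\delta=a/(2\varepsilon_0)$ with plateau on all of $B(x_\ell,a)$; both yield the same exponent $p/(p+2)$ and only the $\varepsilon_0$-dependence of the constant differs.
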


\begin{proof}[Proof of Lemma~\ref{lem_def_eps_s}]
Let denote 
\[
\delta =\dfrac{1}{2{\varepsilon_0}} a. 
\]
We remark that, with assumption \eqref{main_assumption}, the open set $B(x_{\ell},\delta)$ does not intersect the other $B(x_{\lambda},\delta)$ ($\lambda\neq \ell$) if $\varepsilon_{0}\leq 1/2$. 
We introduce then $\chi$ a plateau-function such that 
\[
\mathds{1}_{B(0,a)} \leq \chi \leq \mathds{1}_{B(0,\delta)}
\qquad
\text{with} 
\qquad
|\nabla \chi| \leq C_{\chi} \dfrac{\mathds{1}_{B(0,\delta)} - \mathds{1}_{B(0,a)}}{\delta},
\]
since $\delta > a.$ 
Given a center of hole $x_{\ell}$ ($\ell=1,\ldots,N$) we have, on the one hand (since $k \in \mathcal{FV}(\varepsilon_0)$):
\begin{align*}
 \langle \mu -k, \chi(\cdot-x_{\ell}) \rangle &= \int_{B(x_{\ell},\delta)} (\mathds{1}_{B(x_{\ell},a)}(x) - k(x))\chi(x-x_{\ell})\d x \\
& \geq a^2\pi - \|k\|_{L^{\infty}} \delta^2 \pi \geq a^2\pi \Big( 1 - \varepsilon_{0}^2(1/(2\varepsilon_0))^2\Big)= \frac{3\pi a^2}4.
\end{align*}
On the other hand, we also have the bound:
\begin{align*}
\left| \langle \mu -k , \chi(\cdot-x_{\ell}) \rangle \right| 
&\leq \|\mu - k \|_{W^{-1,p}(\mathbb R^2)} \|\chi\|_{W^{1,p'}(\mathbb R^d)} \leq C(p) \|\mu - k \|_{W^{-1,p}(\mathbb R^2)} \dfrac{\delta^{\frac 2{p'}}}{\delta} \\
&\leq C(p,\varepsilon_{0}) \|\mu - k \|_{W^{-1,p}(\mathbb R^2)}a^{\frac 2{p'} -1}
\end{align*}
which ends the proof.
\end{proof}

We note here that we do not get a better estimate when $n$ becomes larger. This is due to the fact that we only correct the first order on the boundaries in our method of reflections. We could have better estimates by correcting further orders in the boundary conditions. As a consequence, we shall stick to the case $n=3$ below letting $N \to \infty$. Despite we only look at the first terms in the sequence $(\psi^{(n)})$, we provided a convergence result for the whole sequence. Our motivation here is twofold. First, we want to point out that the sequence converges to something which is not the exact solution $\psi_N$. Second, the argument ensuring that we get a more precise approximation to $\psi_N$ with $\psi^{(3)}$ than with $\psi^{(0)}$, independantly of the number $N$ of obstacles, is worth convergence of the whole sequence.

\subsection{Construction of $\psi_c$ and first order expansion} \label{sec_expansion_Psic}

For any $M_{\K}\in \mathcal{M}_{2}(\R)$, we continue this section with an existence theory for the elliptic problem \eqref{def-psic} that we recall here:
\begin{equation} \label{def-psic-2}
\left\{
\begin{aligned}
{\rm div}[({\rm I}_{2}+kM_{\K}) \nabla \psi_c] &= f \quad \text{ in $\mathbb R^2$}\,, \\
\lim_{|x| \to \infty} \nabla \psi_c(x) &= 0 .
\end{aligned}
\right.
\end{equation}
We recall that $f \in L^{\infty}(\mathbb R^2)$ and $k \in \mathcal {FV}(\varepsilon_0)$ have compact supports. 
We state first the well-posedness of \eqref{def-psic-2} and give a first estimate on $\psi_{c}$ and on $\psi_{c}-\psi_{0}$ (recalling that $\psi_{0}=\Delta^{-1} f$ is defined in \eqref{form-psi0}).

\begin{proposition}\label{uc-existence}
Let $q \in (2,\infty).$ There exists a constant $\eps_c(q)>0$ depending on $q$ and $M_{\K}$ such that, if $\varepsilon_0 \leq \varepsilon_c(q)$ there exists, for any $f\in L^\infty_{c}(\R^2)$, a unique (up to a constant) solution $\psi_c\in \dot{W}^{1,q}(\R^2)$ to (\ref{def-psic}). Moreover, there exists $C(q)$ independent of $f$ such that
\[
\| \psi_c \|_{\dot{W}^{1,q}(\R^2)} \leq C(q) \| f\|_{L^1\cap L^\infty(\R^2)}
\quad\text{and}\quad 
\|\psi_{c}-\psi_0\|_{\dot{W}^{1,q}(\mathbb R^2)}\leq C(q) \|k\|_{L^{\infty}(\mathbb R^2)} \| f\|_{L^1\cap L^\infty(\R^2)}.
\]
\end{proposition}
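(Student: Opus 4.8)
The plan is to treat \eqref{def-psic-2} as a perturbation of the Laplace equation and run a fixed-point/Lax--Milgram-type argument in the homogeneous Sobolev space $\dot W^{1,q}(\mathbb R^2)$ for $q>2$ (so that, by the Sobolev embedding $\dot W^{1,q}(\mathbb R^2)\hookrightarrow C^0$ and the Hardy--Littlewood--Sobolev inequality, $\psi_c$ is well-defined up to a constant and the condition $\nabla\psi_c(x)\to 0$ at infinity makes sense). Writing $\psi_c=\psi_0+\varphi$ where $\psi_0=\Delta^{-1}f$, the equation for $\varphi$ becomes $\Delta\varphi = -\operatorname{div}(kM_{\K}\nabla\psi_c) = -\operatorname{div}(kM_{\K}\nabla\psi_0)-\operatorname{div}(kM_{\K}\nabla\varphi)$, i.e. $\varphi = \mathcal{S}(\varphi)$ where $\mathcal{S}(\varphi):= -\Delta^{-1}\operatorname{div}(kM_{\K}\nabla\psi_0) - \Delta^{-1}\operatorname{div}(kM_{\K}\nabla\varphi)$.

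The key steps are as follows. First, recall the Calder\'on--Zygmund estimate: the operator $g\mapsto \nabla\Delta^{-1}\operatorname{div}\, g$ is bounded on $L^q(\mathbb R^2)$ for every $q\in(1,\infty)$, with norm $C_{CZ}(q)$. Second, estimate the source term: $\|\nabla\Delta^{-1}\operatorname{div}(kM_{\K}\nabla\psi_0)\|_{L^q} \leq C_{CZ}(q)|M_{\K}|\,\|k\|_{L^\infty}\|\nabla\psi_0\|_{L^q}$, and $\|\nabla\psi_0\|_{L^q}\leq C(q)\|f\|_{L^1\cap L^\infty}$ follows from \eqref{psi0-bd} together with the decay \eqref{psi0-inf} which guarantees $\nabla\psi_0\in L^q$ for $q>1$ (the $L^\infty$ bound near the support, the $|x|^{-2}$ decay at infinity). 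Third, estimate the Lipschitz constant of $\mathcal{S}$: $\|\nabla\mathcal{S}(\varphi_1)-\nabla\mathcal{S}(\varphi_2)\|_{L^q}\leq C_{CZ}(q)|M_{\K}|\,\|k\|_{L^\infty}\|\nabla(\varphi_1-\varphi_2)\|_{L^q}$. Choosing $\varepsilon_c(q)$ small enough that $C_{CZ}(q)|M_{\K}|\varepsilon_c(q)^2\leq 1/2$ — recall $\|k\|_{L^\infty}\leq\varepsilon_0^2$ for $k\in\mathcal{FV}(\varepsilon_0)$ — makes $\mathcal{S}$ a contraction on $\dot W^{1,q}(\mathbb R^2)$ (quotiented by constants), yielding existence and uniqueness of $\varphi$, hence of $\psi_c$. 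Fourth, the contraction estimate at the fixed point gives $\|\nabla\varphi\|_{L^q}\leq 2\|\nabla\mathcal{S}(0)\|_{L^q}\leq C(q)\|k\|_{L^\infty}\|f\|_{L^1\cap L^\infty}$, which is precisely the second claimed bound $\|\psi_c-\psi_0\|_{\dot W^{1,q}}\leq C(q)\|k\|_{L^\infty}\|f\|_{L^1\cap L^\infty}$; the triangle inequality combined with $\|\nabla\psi_0\|_{L^q}\leq C(q)\|f\|_{L^1\cap L^\infty}$ and $\|k\|_{L^\infty}\leq\varepsilon_0^2\leq 1$ then gives the first bound $\|\psi_c\|_{\dot W^{1,q}}\leq C(q)\|f\|_{L^1\cap L^\infty}$.

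The main obstacle I anticipate is the functional-analytic setup rather than any single estimate: one must be careful that $\dot W^{1,q}(\mathbb R^2)$ for $q>2$ is a genuine Banach space (modulo constants) in which the operators $\Delta^{-1}\operatorname{div}$ and $\nabla\Delta^{-1}\operatorname{div}$ act boundedly and in which the boundary condition at infinity is encoded correctly, and that the solution one constructs is the \emph{unique} one in this class — uniqueness being what forces $q>2$, since in $\dot H^1=\dot W^{1,2}$ the ambiguity of harmonic functions with $L^2$ gradient (e.g. constants only, which is fine) is actually not an issue, but the Sobolev embedding needed to control $\psi_c$ pointwise and to match the normalization $\psi_c-\psi_0\to 0$ at infinity does require $q>2$. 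A secondary technical point is verifying $\nabla\psi_0\in L^q(\mathbb R^2)$ for all $q>1$, which is stated just before \eqref{psi0-inf} in the excerpt, so it may simply be cited. Once the space is fixed, the contraction argument and all the estimates above are routine applications of Calder\'on--Zygmund theory and the elementary bounds on $\psi_0$ already recorded in the paper.
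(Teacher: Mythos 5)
Your proposal is correct and follows essentially the same route as the paper: a Neumann-series/Banach-fixed-point argument in $\dot W^{1,q}(\mathbb R^2)$ for $q>2$, with the Calder\'on--Zygmund boundedness of $\nabla\Delta^{-1}\operatorname{div}$ on $L^q$ supplying the contraction when $\|k\|_{L^\infty}\leq\varepsilon_0^2$ is small (the paper iterates directly on $\psi_c$ rather than on $\varphi=\psi_c-\psi_0$, and it controls $\nabla\psi_0\in L^q$ via Hardy--Littlewood--Sobolev for $\Delta^{-1}:L^{2q/(q+2)}\to\dot W^{1,q}$ instead of your pointwise bound plus decay, but these are cosmetic differences). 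One small slip worth fixing: $\nabla\psi_0\in L^q(\mathbb R^2)$ only for $q>2$ (it decays like $|x|^{-1}$ when $\int f\neq 0$); the ``$q>1$'' statement you cite from the paper concerns $\nabla(\psi_N-\psi_0)$, where the leading logarithmic parts cancel.
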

\begin{proof}
Let $f \in L^{\infty}(\mathbb R^2)$ with compact support and $q>2.$ We prove this statement by a perturbative method. 

For this, we start by noting that $\Delta^{-1}$ (as defined in \eqref{form-psi0}) is a bounded operator from $L^p(\R^2)$ to $\dot{W}^{1,q}(\mathbb R^2)$ where $p=2q/(q+2)$. Indeed, the operator $\nabla \Delta^{-1}$ is associated with the kernel $y \mapsto y/(2\pi|y|^2) \in L^{2,\infty}(\mathbb R^2)$, so for any $g\in L^p(\R^2)$, the Hardy-Littlewood-Sobolev Theorem (see e.g. \cite[Theo. V.1]{Stein} with $\alpha=1$) where $\frac1{p}=\frac1{q}+\frac12$ gives:
\begin{equation}\label{est-Delta}
\|\Delta^{-1}g\|_{\dot{W}^{1,q}(\R^2)}
= \|\nabla\Delta^{-1}g \|_{L^{q}(\mathbb R^2)} 
 \leq C(q) \|g\|_{L^p(\mathbb R^2)}.
\end{equation}
This inequality holds for $q\in (2,\infty)$, and $p \in (1,2)$.
Moreover, by the Calder\'on-Zygmund inequality, we also know the following continuity result:
\begin{equation}\label{est-Delta2}
\|\Delta^{-1} \div g\|_{\dot{W}^{1,q}(\R^2)}
= \|\Delta^{-1} \nabla \div g \|_{L^{q}(\mathbb R^2)} 
 \leq C(q) \|g\|_{L^q(\mathbb R^2)}.
\end{equation}

Next, we remark that $\psi_c$ is a solution to \eqref{def-psic-2} if 
\[
\Delta \psi_c = f - \div(kM_{\K} \cdot \nabla \psi_c). 
\]
We set: 
\[
\psi_{c,0}:=\psi_{0}=\Delta^{-1} f,
\]
and, for arbitrary $n\geq 1$:
\[
\psi_{c,n}:=\Delta^{-1} f- \mathcal{L} \psi_{c,n-1},
\]
where 
\[ 
\mathcal{L} \psi:=\Delta^{-1}\div(kM_{\K} \nabla \psi) .
\]

By \eqref{est-Delta}, we state that $\psi_{c,0}$ belongs to $\dot{W}^{1,q}(\R^2)$ and
\[
\|\psi_{c,0} \|_{\dot{W}^{1,q}(\R^2)} \leq C(q) \| f\|_{L^1(\R^2)}^{\frac1q} \| f\|_{L^\infty(\R^2)}^{1-\frac1q} \leq C(q)\| f\|_{L^1\cap L^\infty(\R^2)}.
\]
Next, we note that $\mathcal{L}$ is a linear operator from $\dot{W}^{1,q}(\R^2)$ to itself such that
\begin{align*}
 \|\mathcal{L}\psi\|_{\dot{W}^{1,q}(\R^2)}& \leq C(q)\|kM_{\K} \nabla \psi \|_{L^q(\R^2)} \\
&\leq C(q,M_{\K}) \| k\|_{L^\infty} \|\nabla \psi\|_{L^{q}(\mathbb R^2)} \leq C(q,M_{\K}) \eps(q)^2 \| \psi\|_{\dot{W}^{1,q}(\R^2)},
\end{align*}
where we used \eqref{est-Delta2} and $k \in \mathcal FV(\varepsilon(q)).$ By choosing $\eps(q) = 1/\sqrt{2C(q,M_{\K})}$, we obtain that $(\psi_{c,n})$ is a Cauchy sequence in $\dot{W}^{1,q}(\R^2)$ which converges to $\psi_{c}$, solution of \eqref{def-psic-2}. This concludes the existence of $\psi_c$ for each fixed $2<q<\infty$, and the uniqueness comes directly from the fact that $\| \mathcal{L}\| \leq 1/2$.

By this Banach fixed point argument, we also have
\[
\| \psi_{c}-\psi_{0} \|_{\dot{W}^{1,q}(\R^2)} \leq \sum_{n=0}^\infty \| \psi_{c,n+1}-\psi_{c,n} \|_{\dot{W}^{1,q}(\R^2)} \leq 2 \| \psi_{c,1}-\psi_{c,0} \|_{\dot{W}^{1,q}(\R^2)}= 2 \| \mathcal{L}\psi_{0} \|_{\dot{W}^{1,q}(\R^2)}
\]
hence
\[
\| \psi_{c}-\psi_{0} \|_{\dot{W}^{1,q}(\R^2)} \leq 2 C(q,M_{\K}) \| k\|_{L^\infty} C(q) \| f\|_{L^1\cap L^\infty(\R^2)}
\]
and 
\[
\| \psi_{c} \|_{\dot{W}^{1,q}(\R^2)} \leq (1+ 2 C(q,M_{\K}) \| k\|_{L^\infty} )C(q) \| f\|_{L^1\cap L^\infty(\R^2)}.
\]
Therefore the proposition is proved.
\end{proof}

\begin{remark}\label{uc-expansion}
As a direct consequence to the previous proof is that
\[
\tilde{\psi}_c : =\psi_{c,1} = \psi_{0} - \Delta^{-1}\div(kM_{\K} \nabla \psi_{0})
\]
satisfies
\[
\| \psi_{c}-\tilde{\psi}_c \|_{\dot{W}^{1,q}(\R^2)} \leq \sum_{n=1}^\infty \| \psi_{c,n+1}-\psi_{c,n} \|_{\dot{W}^{1,q}(\R^2)} \leq 2 \| \psi_{c,2}-\psi_{c,1} \|_{\dot{W}^{1,q}(\R^2)}= 2 \| \mathcal{L}( \psi_{c,1}-\psi_{c,0} )\|_{\dot{W}^{1,q}(\R^2)}
\]
hence
\[
\| \psi_{c}-\tilde{\psi}_c \|_{\dot{W}^{1,q}(\R^2)} \leq 2 \| \mathcal{L} \|^2 \| \psi_{0} \|_{\dot{W}^{1,q}(\R^2)}\leq 2 C(q,M_{\K})^2 \| k\|_{L^\infty}^2 C(q) R_{f}^{2/q} M_{f}.
\]
This gives that $\tilde{\psi}_c$ is the first order expansion of $\psi_c$ w.r.t. the parameter $k$:
\[
\| \psi_{c}-\tilde{\psi}_c \|_{\dot{W}^{1,q}(\R^2)} \leq C(q,R_{f})M_{f} \|k\|^2_{L^{\infty}(\mathbb R^2)}
\]
for any $\varepsilon_{0}\leq \eps_c(q)$.
\end{remark}

\subsection{Stability estimate}\label{sect-stab}
In the above paragraph, we constructed a family $(\psi^{(n)})_{n\in \mathbb N}$ of approximations of $\psi_N.$ 
It turns out that the error is not improved by taking $n \geq 3.$ This is related to the fact that we correct only the first-order expansion of $\psi^{(n)}$ on $\partial \K_{\ell}^a$ in the recursive process. So, we restrict to index $n=3$ in what follows and we denote $\psi^{(3)}$ by $\bar{\psi}_N$.

\medskip

In this part, we show that, if $\varepsilon_0$ is small enough, the leading term of $\bar{\psi}_N$ when $N$ is large (meaning that $\mu$ is close to $k$) is given by $\tilde{\psi}_c$ (defined in the previous subsection) with the definition
$M_{\K} := 2 \M_{\K},$
where $\M_{\K}$ is defined in Lemma~\ref{V1-decay}. 
By Remark~\ref{rem-disk}, we can notice in the case of the unit disk that $M_{\K}=2 {\rm I}_{2}$.

\medskip

For this, we introduce the two following functions on $\R^2$:
\[
\phi_N(x):=-\sum_{\ell=1}^N\int_{B(x_{\ell},a)} (M_{\K}\nabla\psi_0(y))\cdot\frac{x-y}{2\pi|x-y|^2}\d y,~~\phi:=-{\rm div }\Delta^{-1}(k M_{\K}\nabla\psi_0).
\]
In particular, we remark that we have then $\tilde{\psi}_c:=\psi_0+\phi.$
The main result of this part is the following proposition:

\begin{proposition}\label{second term} 
Let $p\in(1,\infty)$ and $\eta \in (0,1)$ given. If $\eps_0 \leq 1/4$, for any compact subset $\mathcal{O}$ of $\R^2$, there exists a constant $C(\mathcal{O},R_{f},p,\eps_0,\eta)$ such that:
\[
\|\bar{\psi}_N-\tilde{\psi}_c\|_{L^2(\mathcal{O}\cap \mathcal{F}_{N})}\leq C(\mathcal O, R_{f},p,\eps_0,\eta) M_{f} \left[ \left( \dfrac{a}{d}\right)^{3-\eta} +
\|\mu -k\|^{ \frac{p(1-\eta)}{p+2}}_{W^{-1,p}(\mathbb R^2)}
 + \|\mu -k\|^{\frac12}_{W^{-1,p}(\mathbb R^2)} \right].
\]
\end{proposition}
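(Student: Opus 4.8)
\emph{Strategy.} Since $\bar{\psi}_N=\psi^{(3)}=\psi_0+\phi^{(1)}+\phi^{(2)}+\phi^{(3)}$ and $\tilde{\psi}_c=\psi_0+\phi$, the starting point is the splitting
\[
\bar{\psi}_N-\tilde{\psi}_c=\big(\phi^{(1)}-\phi_N\big)+\big(\phi^{(2)}+\phi^{(3)}\big)+\big(\phi_N-\phi\big),
\]
the middle object $\phi_N$ being tailored to be simultaneously close to the discrete corrector $\phi^{(1)}$ and to the continuous one $\phi$. I would bound the three groups on $\mathcal{O}\cap\mathcal{F}_N$ independently. For the tail $\phi^{(2)}+\phi^{(3)}$ of the method of reflections I would only need the pointwise decay of $V^a[A]$ provided by Lemma~\ref{V1-decay} and the scaling law \eqref{scaling-V}, which yields $\|V^a[A]\|_{L^2(\mathcal{O})}\le C\,a^2|A|\,|\ln a|^{1/2}$, then the triangle inequality $\|\phi^{(n)}\|_{L^2}\le Ca^2|\ln a|^{1/2}\sum_\ell|A^{(n)}_\ell|$, the passage from $\ell^1$ to $\ell^q$ at the cost of $N^{1/q'}$, Lemma~\ref{Aapp} iterated twice, the bound $\max_\ell|A^{(1)}_\ell|=\max_\ell|\nabla\psi_0(x_\ell)|\le CM_f$ from \eqref{psi0-bd}, and $N\le Cd^{-2}$. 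Optimising in $q$ and absorbing logarithms into $\eta$ then gives $\|\phi^{(2)}+\phi^{(3)}\|_{L^2(\mathcal{O}\cap\mathcal{F}_N)}\le CM_f(a/d)^{3-\eta}$.

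\emph{Discrete corrector versus integral corrector.} This is the heart of the argument. Fix $x\in\mathcal{O}\cap\mathcal{F}_N$ and let $x_{\ell_0}$ be the nearest hole centre. For every $\ell$ with $|x-x_\ell|\ge 2a$, Lemma~\ref{V1-decay} together with \eqref{scaling-V} shows that $V^a[A^{(1)}_\ell](x-x_\ell)$ equals $a^2(\M_{\K}A^{(1)}_\ell)\cdot(x-x_\ell)/|x-x_\ell|^2$ up to an error $O\big(a^3|A^{(1)}_\ell|/|x-x_\ell|^2\big)$; on the other hand, since $z\mapsto (x-z)/|x-z|^2$ is harmonic on $B(x_\ell,a)$, the mean value formula shows that the $\ell$-th term of $\phi_N$ equals $\tfrac{a^2}{2}(M_{\K}\nabla\psi_0(x_\ell))\cdot(x-x_\ell)/|x-x_\ell|^2$ up to $\int_{B(x_\ell,a)}M_{\K}\big(\nabla\psi_0(y)-\nabla\psi_0(x_\ell)\big)\cdot\frac{x-y}{2\pi|x-y|^2}\,\d y=O\big(a^3|\ln a|\,M_f/|x-x_\ell|\big)$ by the almost-Lipschitz estimate \eqref{psi0-lips}. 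Because $A^{(1)}_\ell=-\nabla\psi_0(x_\ell)$ and $M_{\K}=2\M_{\K}$, the two leading (monopole) terms cancel \emph{exactly}, and summing the remaining residuals over $\ell$ as in the proof of Lemma~\ref{Aapp} (with $N\le Cd^{-2}$) produces a bound of order $M_f a$. For the at most one index $\ell_0$ with $|x-x_{\ell_0}|<2a$, I would instead use the crude bounds $|V^a[A^{(1)}_{\ell_0}](x-x_{\ell_0})|\le CM_f a$ (maximum principle) and $\big|\int_{B(x_{\ell_0},a)}(M_{\K}\nabla\psi_0)\cdot\frac{x-y}{2\pi|x-y|^2}\,\d y\big|\le CM_f\int_{B(x_{\ell_0},a)}|x-y|^{-1}\,\d y\le CM_f a$. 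Hence $|\phi^{(1)}-\phi_N|\le CM_f a$ on all of $\mathcal{O}\cap\mathcal{F}_N$, and Lemma~\ref{lem_def_eps_s} converts this into $\|\phi^{(1)}-\phi_N\|_{L^2(\mathcal{O}\cap\mathcal{F}_N)}\le CM_f\|\mu-k\|_{W^{-1,p}}^{p(1-\eta)/(p+2)}$, the exponent loss $(1-\eta)$ absorbing the logarithms from \eqref{psi0-lips} and from Lemma~\ref{lem_def_eps_s}.

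\emph{Continuous correctors via interpolation.} Finally, $\phi_N-\phi=-\div\Delta^{-1}\big((\mu-k)M_{\K}\nabla\psi_0\big)$, i.e.\ the convolution of $(\mu-k)M_{\K}\nabla\psi_0$ with the locally integrable kernel $z\mapsto z/(2\pi|z|^2)$. This gives at once the crude bound $\|\phi_N-\phi\|_{L^\infty(\mathcal{O})}\le C(\mathcal{O})\,\|(\mu-k)M_{\K}\nabla\psi_0\|_{L^\infty}\le C(\mathcal{O})M_f$. For the $L^1$ norm I would use duality: testing against $\varphi\in L^\infty(\mathcal{O})$ and transferring the convolution onto $\varphi$, one rewrites $\int_{\mathcal{O}}\varphi(\phi_N-\phi)$ as a pairing of $\mu-k\in W^{-1,p}$ with a function of the form $\nabla\psi_0\cdot\big(M_{\K}^{\,T}\,[\,z/(2\pi|z|^2)\ast(\varphi\,\mathds{1}_{\mathcal{O}})\,]\big)$, whose $W^{1,p'}$-norm is $\le C(\mathcal{O},p,R_f)\,M_f\,\|\varphi\|_{L^\infty}$ by the Calder\'on--Zygmund estimate for $\nabla\big(z/|z|^2\big)$ and the bound $\|\nabla^2\psi_0\|_{L^s}\le C(s,R_f)M_f$, valid for every finite $s$. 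Thus $\|\phi_N-\phi\|_{L^1(\mathcal{O})}\le C(\mathcal{O},p,R_f)\,M_f\,\|\mu-k\|_{W^{-1,p}}$, and the elementary interpolation $\|u\|_{L^2(\mathcal{O})}^2\le\|u\|_{L^\infty(\mathcal{O})}\|u\|_{L^1(\mathcal{O})}$ yields the term $M_f\,\|\mu-k\|_{W^{-1,p}}^{1/2}$. Summing the three contributions proves the proposition.

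\emph{Main difficulty.} The crux is the exact cancellation of the monopole terms in the second step: it is precisely this that dictates the normalisation $M_{\K}=2\M_{\K}$, and without it $\phi^{(1)}-\phi_N$ would fail to be a genuinely lower-order term. The heaviest technical passage is then the $L^1$ bound in the last step, where one must move the singularity of the Newtonian kernel off the diagonal before pairing with $\mu-k$, while keeping track of the fact that $\nabla^2\psi_0$ only lies in $\bigcap_{s<\infty}L^s$ and not in $L^\infty$.
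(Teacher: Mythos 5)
Your overall decomposition $\bar{\psi}_N-\tilde{\psi}_c=(\phi^{(1)}-\phi_N)+(\phi^{(2)}+\phi^{(3)})+(\phi_N-\phi)$ is exactly the paper's ($r_2+r_3+r_1$), and your handling of $r_1=\phi_N-\phi$ and $r_2=\phi^{(1)}-\phi_N$ is sound. For $r_1$ you interpolate $\|r_1\|_{L^2(\mathcal O)}^2\le\|r_1\|_{L^\infty}\|r_1\|_{L^1(\mathcal O)}$ with an $L^\infty$ bound $CM_f$ and an $L^1$ bound $C M_f\|\mu-k\|_{W^{-1,p}}$ obtained by duality; the paper instead gets the $L^\infty$ bound $CM_f\|\mu-k\|_{W^{-1,p}}^{1/2}$ directly via the $\|g\|_{L^1}^{1/2}\|g\|_{L^\infty}^{1/2}$ interpolation built into the Biot--Savart kernel. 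Both routes land on the same $\|\mu-k\|^{1/2}$. Your treatment of $r_2$ -- harmonicity, mean-value, exact cancellation of the monopoles because $M_{\K}=2\M_{\K}$, far/near split -- is the paper's argument. A small imprecision: the far-field residual is not $O(M_f a)$ but rather $O\big(M_f(a^{1-\eta}+(a/d)^4)\big)$, the $(a/d)^4$ appearing through Young's inequality applied to the Lipschitz term $a^{3/4}\|\mu\|_{L^1}^{1/2}\sim a^{3/4}(a/d)$; this extra $(a/d)^4$ is however dominated by $(a/d)^{3-\eta}$ so it does not affect the final statement.

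The genuine gap is in the estimate of $r_3=\phi^{(2)}+\phi^{(3)}$. The crude triangle inequality
\[
\|\phi^{(n)}\|_{L^2(\mathcal O\cap\mathcal F_N)}\le \sum_{\ell}\|V^a[A^{(n)}_\ell](\cdot-x_\ell)\|_{L^2(\mathcal O)} \le C\,a^2\,|\ln a|^{1/2}\sum_{\ell}|A^{(n)}_\ell|
\]
necessarily produces a factor $|\ln a|^{1/2}$ coming from $\int_a^{R_{\mathcal O}}r^{-1}\,\d r$. Following your chain (H\"older $\ell^1\to\ell^q$ costing $N^{1/q'}$, Lemma~2.7, $N\le Cd^{-2}$) one reaches for $n=2$
\[
\|\phi^{(2)}\|_{L^2}\le C\,M_f\,|\ln a|^{1/2}\,\big(a/d\big)^{4-2/q},
\]
and the claim that the logarithm is ``absorbed into $\eta$'' to give $(a/d)^{3-\eta}$ fails: the logarithm is in $a$, not in $a/d$. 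In the critical regime where $a/d=\varepsilon_0$ is held fixed and $a\to 0$ (precisely the regime this paper is after, and the one where the competing term $\|\mu-k\|_{W^{-1,p}}$ can also be driven to $0$), your bound $|\ln a|^{1/2}(a/d)^{4-2/q}\sim\varepsilon_0^{4}\,|\ln a|^{1/2}$ diverges, whereas the target $(a/d)^{3-\eta}\sim\varepsilon_0^{3-\eta}$ stays bounded. You cannot trade the $|\ln a|^{1/2}$ for a small power of $a/d$ because $|\ln a|=|\ln(a/d)|+|\ln d|$ and $|\ln d|$ is unbounded (indeed $d\to 0$). The fix, which is what the paper does, is to avoid computing each $\|V^a[A_\ell](\cdot-x_\ell)\|_{L^2}$ separately and instead exploit that for ${\rm dist}(x,\{x_\lambda\})\ge 2a$ one has $\sum_\ell a^2|A_\ell^{(j)}|/|x-x_\ell|\le C\int \big(\sum_\ell|A_\ell^{(j)}|\mathds{1}_{B(x_\ell,a)}(y)\big)\,|x-y|^{-1}\mathds{1}_{|x-y|\le R_{\mathcal O}}\,\d y$, then use Young's inequality $L^1\ast L^2\to L^2$; since $\|\,|y|^{-1}\mathds{1}_{|y|\le R_{\mathcal O}}\|_{L^1}=2\pi R_{\mathcal O}$ is a pure constant, no logarithm in $a$ appears, and together with H\"older and Lemma~2.7 this delivers $(a/d)^{3-2/q}$, hence $(a/d)^{3-\eta}$ for $q$ large. (The near-field contribution $\sum_\ell\|r_3\|^2_{L^2(B(x_\ell,2a))}$ still has to be handled separately, again without introducing $|\ln a|$.)
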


\begin{proof}
We fix $p \in (1,\infty)$ and $\mathcal O \Subset \mathbb R^2$ for the whole proof. 
According to the definitions of $\bar{\psi}_N$ \eqref{eq_phi_iter} and $\phi$, we notice that 
\[
\bar{\psi}_N-\tilde{\psi}_c=\phi^{(1)}+\phi^{(2)}+\phi^{(3)}-\phi=r_{1}+r_{2}+r_{3},
\]
where
\[
r_{1}:=\phi_N-\phi,~~r_{2}:=\phi^{(1)}-\phi_N~~\mathrm{and}~~r_{3}:=\phi^{(2)}+\phi^{(3)}.
\]
We first notice that 
\[
r_{1}(x)=\int_{\R^2}(k(y)-\mu(y)) (M_{\K}\nabla\psi_0(y))\cdot\frac{x-y}{2\pi|x-y|^2}\d y, \quad \forall \, x \in \R^2,
\]
which, combined twice with \eqref{psi0-bd} and the fact that $k -\mu$ is supported in $K_{PM}$, gives 
\[
 \| r_{1} \|_{L^\infty(\R^2)}
 \leq C \| (k -\mu)\nabla\psi_0 \|_{L^1}^{1/2} \| (k -\mu)\nabla\psi_0 \|_{L^\infty}^{1/2}
 \leq C \| (k -\mu) \chi \nabla\psi_0 \|_{L^1}^{1/2} R_{f}^{1/2}M_{f}^{1/2} 
\]
where $\chi\in C^\infty_{c}(\R^2)$ such that $\chi\equiv 1$ on $K_{PM}$. Hence, we have
\[
 \| r_{1} \|_{L^\infty(\R^2)}
 \leq C \| k -\mu \|_{ W^{-1,p}(\mathbb R^2)}^{1/2} \| \chi \nabla\psi_0 \|_{ W^{1,p'}(\mathbb R^2)}^{1/2} R_{f}^{1/2}M_{f}^{1/2} 
 \leq C(p,\chi, R_{f})\| k -\mu \|_{ W^{-1,p}(\mathbb R^2)}^{1/2} M_{f},
\]
where we have again used \eqref{psi0-bd} together with the Calder\'on-Zygmund inequality (in order to state that $\|D^2 \psi_{0}\|_{L^{p'}(\mathbb R^2)}\leq C(p')\| f \|_{L^{p'}(\mathbb R^2)}$). This ends the estimate for $r_{1}$:
\begin{equation}\label{r1N}
 \| r_{1} \|_{L^2(\mathcal O\cap \mathcal{F}_{N})} \leq |\mathcal{O}|^{1/2}C(p,K_{PM}, R_{f})\| k -\mu \|_{ W^{-1,p}(\mathbb R^2)}^{1/2} M_{f}.
\end{equation}

\medskip 

To compute $\|r_{2}\|_{L^2(\mathcal O\cap \mathcal F_N)}$ we split:
\begin{equation} \label{eq_r2_split}
\|r_{2}\|^2_{L^2(\mathcal O\cap \mathcal F_N)} 
\leq \|r_2\|^2_{L^2(\mathcal O \setminus \bigcup B(x_{\ell},2a))} + 
\sum_{\ell=1}^N \|r_{2}\|^2_{L^2(B(x_{\ell},2a))} .
\end{equation}
For the first term, we notice from the expansion of $V^1$ (see Lemma~\ref{V1-decay}) and the scaling law \eqref{scaling-V}, there holds, for $x\in \mathcal O \setminus \bigcup B(x_{\ell},2a)$,
\begin{align*}
 r_{2}(x)=&\int_{\R^2}\big(M_{\K} \nabla\psi_0(y)\big)\mu(y)\cdot\frac{x-y}{2\pi|x-y|^2}\d y -\sum_{\ell=1}^N V^{a}[\nabla\psi_0(x_{\ell})](x-x_\ell) \\
=&\int_{\R^2}\big(M_{\K} \nabla\psi_0(y)\big)\mu(y)\cdot\frac{x-y}{2\pi|x-y|^2}\d y \\
&-\sum_{\ell=1}^N a^2\dfrac{(\M_{\K}\nabla\psi_0(x_{\ell}))\cdot (x-x_\ell)}{|x-x_\ell|^2} + a^3\nabla\psi_0(x_{\ell})\cdot \frac{h_{0}(\frac{x-x_\ell}a)}{|x-x_\ell|^{2}}.
\end{align*}
As often in this section, we use the harmonicity of the function $y\mapsto (x-y)/|x-y|^2$ on $B(x_{\ell},a)$ since $\dist(x,\{x_1,\ldots,x_N\})\geq 2a$ in this first case.
This yields that, for $x \in \mathbb R^2 \setminus \bigcup B(x_{\ell},2a):$
\begin{align*}
 r_{2}(x)
=&\int_{\R^2}\big( M_{\K}\nabla\psi_0(y)\big)\mu(y)\cdot\frac{x-y}{2\pi|x-y|^2}\d y - \sum_{\ell=1}^N \frac1{\pi}\int_{B(x_{\ell},a)} \dfrac{(\M_{\K}\nabla\psi_0(x_{\ell}))\cdot (x-y)}{|x-y|^2}\d y \\
&-\sum_{\ell=1}^N a^3\nabla\psi_0(x_{\ell})\cdot \frac{h_{0}(\frac{x-x_\ell}a)}{|x-x_\ell|^{2}} \\
=& \frac1{2\pi}\sum_{\ell=1}^N \int_{B(x_{\ell},a)}\Big(M_{\K}( \nabla\psi_0(y)-\nabla\psi_0(x_{\ell}))\Big)\cdot\dfrac{ x-y}{|x-y|^2}\d y
- \sum_{\ell=1}^N a^3\nabla\psi_0(x_{\ell})\cdot \frac{h_{0}(\frac{x-x_\ell}a)}{|x-x_\ell|^{2}}. 
\end{align*}
Let denote by $r_{2}^m(x)$ and $r_{2}^{r}(x)$ respectively the two terms on the right-hand side of this last equality. By \eqref{psi0-lips}, we get for any $y\in B(x_{\ell},a)$:
\[
\big| \nabla\psi_0(y)-\nabla\psi_0(x_{\ell})\big|\leq C(1+R_{f}^2)M_{f} a |\ln a|\leq C(1+R_{f}^2)M_{f} a^{3/4},
\]
hence
\begin{align*}
 \| r_{2}^{m} \|_{L^\infty(\mathcal O \setminus \bigcup B(x_{\ell},2a))}
 \leq &C(R_{f})M_{f} a^{3/4} \int_{\R^2} \frac{\mu (y)}{|x-y|}\, dy\\
 \leq &C(R_{f})M_{f} a^{3/4}\| \mu \|_{L^1}^{1/2} \| \mu \|_{L^\infty}^{1/2}\\
 \leq &C(R_{f})M_{f} a^{3/4} (a^2 N)^{1/2},
\end{align*}
where we have used a slightly stronger version of \eqref{psi0-bd} (see \cite[App. 2.3]{MarPul}). Using again that $N\leq C(K_{PM})d^{-2}$, we finally obtain
\[ \| r_{2}^{m} \|_{L^\infty(\mathcal O \setminus \bigcup B(x_{\ell},2a))}
 \leq C(K_{PM},R_{f})M_{f} \Big(a + \Big(\frac ad\Big)^{4}\Big).
\]

As for the remainder term $r_{2}^{r}$ in the expansion of $r_{2}$, we use that $h_{0}$ is bounded -- and that $B(x_{\ell},a) \cap B(x,a) = \emptyset$ for arbitrary 
$\ell \in \{1,\ldots,N\}$ (since $x \in \mathbb R^2 \setminus \bigcup B(x_\ell,2a)$) -- to state that:
\begin{align*}
| \sum_{\ell=1}^N a^3\nabla\psi_0(x_{\ell})\cdot \frac{h_{0}(\frac{x-x_\ell}a)}{|x-x_\ell|^{2}} \Big| 
\leq& C a \| \nabla \psi_{0} \|_{L^\infty} \int_{K_{PM} \setminus B(x,a)} \frac{1}{|x-y|^2} \d y \\
\leq& C(\mathcal O, K_{PM}, R_{f}) M_{f} a\Big( |\ln a| + 1 \Big).
\end{align*}
Considering that there exists $C_{\eta} >0 $ for which $a |\ln a|\leq C_{\eta}a^{1-\eta}$ whatever the value of $\eta \in (0,1)$, we finally get by Lemma~\ref{lem_def_eps_s} that:
\begin{equation} \label{eq_r2-partie1}
 \| r_{2} \|_{L^{\infty}(\mathcal O \setminus \bigcup B(x_{\ell},2a))}
 \leq C(p,\eta,\varepsilon_0,\mathcal O,K_{PM},R_{f}) M_{f} \Big( \Big(\frac ad\Big)^{4} + \|\mu - k \|^{\frac{p(1-\eta)}{p+2}}_{W^{-1,p}(\mathbb R^2)} \Big)
 \end{equation}
provided $\varepsilon_{0}\leq 1/4$. 

\medskip

To bound $r_2$ it remains to compute an upper bound for 
\[
\sum_{\ell=1}^{N} \|r_2\|^2_{L^2(B(x_\ell,2a))}.
\]
For this, given $\ell \in \{1,\ldots,N\}$ we write again $r_2(x) = r^{m}_{\ell}(x) + r^r_{\ell}(x)$ with:
\begin{align*}
r^m_{\ell}(x) & = 
\int_{\R^2 \setminus B(x_{\ell},a)} \big(M_{\K} \nabla\psi_0(y)\big)\mu(y)\cdot\frac{x-y}{2\pi|x-y|^2}\d y - \sum_{\lambda \neq \ell} V^{a}[\nabla\psi_0(x_{\lambda})](x-x_{\lambda}) \,, 
\\
r^r_{\ell}(x) & = \int_{B(x_{\ell},a)} \big(M_{\K} \nabla\psi_0(y)\big)\mu(y)\cdot\frac{x-y}{2\pi|x-y|^2}\d y - V^{a}[\nabla\psi_0(x_{\ell})](x-x_\ell). \\
\end{align*}
We control $r^m_{\ell}$ as the previous term $r_2^m.$ First, we remark that, on $B(x_{\ell},2a)$ there holds:
 \[ r^m_{\ell}(x) = \frac1{2\pi}\sum_{\lambda \neq \ell} \int_{B(x_{\lambda},a)}\Big(M_{\K}( \nabla\psi_0(y)-\nabla\psi_0(x_{\lambda}))\Big)\cdot\dfrac{ x-y}{|x-y|^2}\d y
- \sum_{\lambda \neq \ell} a^3\nabla\psi_0(x_{\ell})\cdot \frac{h_{0}(\frac{x-x_\lambda}a)}{|x-x_\lambda|^{2}}
\] 
that we bound similarly as the previous $r_{2}^m.$ This yields:
\[
|r_{\ell}^m(x)| \leq C(K_{PM},R_f) M_f \left( \left( \dfrac{a}{d}\right)^4 + \|\mu - k \|^{\frac{p(1-\eta)}{p+2}}_{W^{-1,p}(\mathbb R^2)}\right) \quad \forall \, x \in B(x_\ell,2a). 
\]
We note that this bound is not optimal since we merely used that the distance between two centers $x_{\lambda}$ is larger than $a$ (while there is a distance larger than $d$).
As for the second term, we have, for $x \in B(x_{\ell},2a),$ by the scaling property \eqref{scaling-V}:
\[
 |V^{a}[\nabla \psi_0(x_{\ell})](x-x_{\ell})| \leq a |\nabla \psi_0(x_{\ell}) | ( \| V^1[e_{1}]\|_{L^\infty}+ \| V^1[e_{2}]\|_{L^\infty}) \leq C(\K)R_{f} M_{f}a
 \]
 and
\[
\left| \int_{B(x_{\ell},a)} \big(M_{\K} \nabla\psi_0(y)\big) \cdot\frac{x-y}{2\pi|x-y|^2}\d y \right| \leq C(R_f,\K) M_f a,
\]
so that $|r^r_{\ell}(x)| \leq C(R_f,\K) M_f a$ on $B(x_{\ell},2a).$ Hence, recalling Lemma~\ref{lem_def_eps_s}, we have finally:
\begin{equation} \label{eq_r2-partie-2}
\sum_{\ell=1}^{N} \|r_2\|^2_{L^2(B(x_\ell,2a))} \leq C(K_{PM}) M_f^2 \left(\dfrac{a}{d} \right)^2 \left( \|\mu - k \|^{\frac{2p(1-\eta)}{p+2}}_{W^{-1,p}(\mathbb R^2)} + \left( \dfrac{a}{d}\right)^8\right).
\end{equation}
Plugging \eqref{eq_r2-partie1} and \eqref{eq_r2-partie-2} into \eqref{eq_r2_split}
yields finally:
\begin{equation}\label{r2N}
 \| r_{2} \|_{L^2(\mathcal{O}\cap \mathcal{F}_{N})} \leq C(\mathcal{O},p,\eta,\varepsilon_0,\K,K_{PM},R_{f}) M_{f} \Big( \Big(\frac ad\Big)^{4} + \|\mu - k \|^{\frac{p(1-\eta)}{p+2}}_{W^{-1,p}(\mathbb R^2)} \Big).
\end{equation}

\medskip 

Now we turn to deal with $r_{3}$. We split again:
\[
\|r_3\|^2_{L^2(\mathcal O \cap \mathcal F_N)}\leq 
\|r_3\|^2_{L^2(\mathcal O \setminus \bigcup B(x_{\ell},2a))} + \sum_{\ell=1}^N \|r_3\|^2_{L^2(B(x_{\ell},2a))}.
\]
Concerning the first term on the right-hand side, as for $r_{2}$, we use Lemma~\ref{V1-decay} and \eqref{scaling-V} to notice that: 
\[
r_{3}(x)= \sum_{j=2}^3 \sum_{\ell=1}^N V^{a}[A_{\ell}^{(j)}](x-x_\ell)
= \sum_{j=2}^3 \sum_{\ell=1}^N a^2\dfrac{(\M_{\K} A_{\ell}^{(j)})\cdot (x-x_\ell)}{|x-x_\ell|^2} + a^3A_{\ell}^{(j)}\cdot \frac{h_{0}(\frac{x-x_\ell}a)}{|x-x_\ell|^{2}}.
\]
For the first term, we apply that ${\rm dist}(x,\{x_1,\ldots,x_N\}) \geq 2a$ to bound:
\begin{align*}
 |r_{3}(x)| 
 \leq &\sum_{j=2}^3 C(\K) \sum_{\ell=1}^N a^2 \dfrac{|A_{\ell}^{(j)}|}{|x-x_\ell|}
 \leq \sum_{j=2}^3 C(\K) \int_{\R^2} \dfrac{\sum_{\ell=1}^N |A_{\ell}^{(j)}| \mathds{1}_{B(x_{\ell},a)}(y)}{|x-y|} \d y\\
 \leq & \sum_{j=2}^3 C(\K) \int_{\R^2} \Big(\sum_{\ell=1}^N |A_{\ell}^{(j)}| \mathds{1}_{B(x_{\ell},a)}(y)\Big) \dfrac{\mathds{1}_{|x-y|\leq R_{\mathcal O} }}{|x-y|} \d y,
 \end{align*}
where $R_{\mathcal O} = {\rm diam}(K_{PM} \cup \mathcal O)$ (since $x \in \mathcal O$ while $y \in K_{PM}$ in the above integral). So by the Young's convolution inequality, we get
\begin{align*}
\| r_{3}\|_{L^2(\mathcal{O} \setminus \bigcup B(x_{\ell},2a) )} 
&\leq C(\K) \Big\| \dfrac{\mathds{1}_{ |y|\leq R_{\mathcal O} }}{|y|} \Big\|_{L^1(\R^2)} \sum_{j=2}^3 \Big\| \sum_{\ell=1}^N |A_{\ell}^{(j)}| \mathds{1}_{B(x_{\ell},a)}(y) \Big\|_{L^2(\R^2)} \\
&\leq C(\K,\mathcal{O}, K_{PM})\sum_{j=2}^3 (a^2 N)^{\frac12-\frac1q} \Big\| \sum_{\ell=1}^N |A_{\ell}^{(j)}| \mathds{1}_{B(x_{\ell},a)}(y) \Big\|_{L^q(\R^2)} \\
&\leq C(\K,\mathcal{O}, K_{PM})\Big(\frac ad\Big)^{2(\frac12-\frac1q)} a^{\frac2q}\sum_{j=2}^3 \Big(\sum_{\ell=1}^N |A_{\ell}^{(j)}|^q\Big)^{1/q} .
 \end{align*}
Next, we use Lemma~\ref{Aapp} to state that 
\[
 \sum_{j=2}^3 \Big(\sum_{\ell=1}^N |A_{\ell}^{(j)}|^q\Big)^{1/q} \leq C(q,\varepsilon_{0}) \Big(\frac{a}{d}\Big)^{2(1-\frac{1}{q})} \Big(\sum_{\ell=1}^N |A^{(1)}_{\ell}|^q\Big)^{1/q}
\]
provided $\varepsilon_{0} <1/2$. So we conclude by \eqref{est-A1} and the fact that $N\leq C(K_{PM})d^{-2}$:
\begin{align*}
 \| r_{3}\|_{L^2(\mathcal{O} \setminus \bigcup B(x_{\ell},2a))} 
 &\leq C(q,\varepsilon_{0},\K, \mathcal{O},K_{PM})\Big(\frac ad\Big)^{2(\frac12-\frac1q)} a^{\frac2q} \Big(\frac{a}{d}\Big)^{2(1-\frac{1}{q})} R_{f}M_{f} \frac1{d^{2/q}}\\
 &\leq C(q,\varepsilon_{0},\K,\mathcal{O},K_{PM})R_{f}M_{f}\Big(\frac ad\Big)^{3-\frac2q} 
\end{align*}
and by taking $q$ sufficiently large, we reach:
\begin{equation} \label{eq_r3partie1}
 \| r_{3}\|_{L^2(\mathcal{O} \setminus \bigcup B(x_{\ell},2a))} \leq C(\mathcal O,\varepsilon_0,\K,K_{PM},\eta) R_{f}M_{f}\Big(\frac ad\Big)^{3-\eta}. 
\end{equation}

Concerning the remaining term $\sum_{\ell=1}^N \|r_3\|^2_{L^2(B(x_{\ell},2a))}$, we proceed as for $r_2$. On any $B(x_{\ell},2a)$ we have,
since ${\rm dist}(x,\{x_{\lambda}, \, \lambda \neq \ell \}) \geq d/2$:
\begin{align*}
|r_3(x)| & \leq C \sum_{j=2}^3 \left( a|A_{\ell}^{(j)}| + \sum_{\lambda \neq \ell} \dfrac{a^2 |A_{\lambda}^{(j)}|}{|x-x_{\lambda}|}\right)
\leq C \sum_{j=2}^{3} \left( a |A_{\ell}^{(j)}| + a^2 \left( \sum_{\lambda \neq \ell} \dfrac{1}{|x-x_{\lambda}|^{4/3}}\right)^{\frac 34} \left( \sum_{\lambda=1}^N |A_{\lambda}^{(j)}|^4\right)^{\frac 14} \right)\\
& \leq C(\mathcal{O}, K_{PM}) \sum_{j=2}^{3} \left( a |A_{\ell}^{(j)}| +\dfrac{a^2}{d^{3/2}} \left( \sum_{\lambda=1}^{N} |A_{\lambda}|^4 \right)^{1/4} \right).
\end{align*}
Consequently: 
\begin{align*}
\sum_{\ell=1}^N \|r_3\|^2_{L^2(B(x_{\ell},2a))}
& \leq C(\mathcal{O}, K_{PM}) a^2 \sum_{j=2}^3 \sum_{\ell=1}^N \left( a^2 |A_\ell^{(j)}|^2 + \dfrac{a^4}{d^4} \left( \frac 1N\sum_{\lambda=1}^{N} |A_{\lambda}^{(j)}|^4 \right)^{\frac 12}
\right)
\\
& \leq C(\mathcal{O},K_{PM}) \sum_{j=2}^{3} \left( a^2 \dfrac{a^2}{d^2} \dfrac{1}{N}\sum_{\ell=1}^N |A_\ell^{(j)}|^2 + \dfrac{a^6}{d^6} \left( \frac 1N\sum_{\lambda=1}^{N} |A_{\lambda}^{(j)}|^4 \right)^{\frac 12} \right).
\end{align*}
We apply then again Lemma~\ref{Aapp} with $q= 2$ and $q=4$ together with \eqref{psi0-bd} to obtain:
\begin{equation} \label{eq_r3partie2}
\sum_{\ell=1}^N \|r_3\|^2_{L^2(B(x_{\ell},2a))} \leq C(\mathcal{O},K_{PM},R_f) M_f^2 \left(a^2 \left(\dfrac{a}{d}\right)^{4} + \left( \dfrac{a}{d} \right)^{9} \right).
\end{equation}
Finally, combining \eqref{eq_r3partie1}-\eqref{eq_r3partie2}, we obtain, similarly as above:
\begin{equation}\label{r3N}
\|r_{3}\|_{L^2(\mathcal O \cap \mathcal F_N)} \leq C(\mathcal O,K_{PM},\varepsilon_0,\eta,R_f) M_{f} \left( \left(\dfrac ad \right)^{3-\eta} + \|\mu - k \|^{\frac{2p}{p+2}}_{W^{-1,p}(\mathbb R^2)} \right).
\end{equation}

Bringing together \eqref{r1N}-\eqref{r2N}-\eqref{r3N}, the proposition is proved.
\end{proof}

\subsection{End of proof of Theorem~\ref{main-elliptic}}

To finish the proof of Theorem~\ref{main-elliptic}, we first assume that 
\[
\varepsilon_0 \leq \min(1/4, \varepsilon_{ref}, \eps_c(3))
\]
 so that Corollary~\ref{1st term-bis}, Proposition~\ref{uc-existence} and Proposition~\ref{second term} hold true. We decompose (up to an additive constant) $\psi_N$ into:
\[
\psi_N-\psi_c=\psi_N-\bar{\psi}_N+\bar{\psi}_N-\tilde{\psi}_c+\tilde{\psi}_c-\psi_c
\]
where we recall that $\bar{\psi}_N = \psi^{(3)}$ (with the notation of Proposition~\ref{1st term}) and $\tilde{\psi}_c=
\psi_{0} - \Delta^{-1}\div(kM_{\K} \nabla \psi_{0})$.

\medskip

First according to Corollary~\ref{1st term-bis}, we have that
\[
\|\psi_N-\bar{\psi}_N\|_{\dot{H}^1(\mathcal{F}_N)}\leq C_{app}(R_{f},p,\eps_0) M_{f}\left( \Big(\frac{a}{d}\Big)^{4} +\|\mu - k\|^{\frac{p}{p+2}}_{W^{-1,p}(\mathbb R^2)}\right) . 
\]
By Remark~\ref{uc-expansion}, we have for any $\mathcal{O}\Subset \R^2$:
\[
\|\psi_c-\tilde{\psi}_c\|_{\dot{H}^1(\mathcal{O}\cap\mathcal{F}_N)}\leq C(\mathcal{O})\|\psi_c-\tilde{\psi}_c\|_{\dot{W}^{1,3}(\mathbb R^2)}\leq C(\mathcal{O},R_{f} )M_{f} \|k\|^2_{L^{\infty}(\mathbb R^2)} .
\]
Hence we obtain that $\Gamma_{1,N}:=\psi_N-\bar{\psi}_N+\tilde{\psi}_c-\psi_c$ is harmonic in $\mathbb R^2 \setminus K_{PM}$ and satisfies
\[
\|\nabla\Gamma_{1,N}\|_{L^2(\mathcal{O}\cap \mathcal{F}_N)}\leq C(\eps_0,p, \mathcal{O},R_{f}, K_{PM}, \K)M_{f} \left[
 \Big(\frac{a}{d}\Big)^{4} + \|\mu - k\|^{\frac{p}{p+2}}_{W^{-1,p}(\mathbb R^2)} + 
 \|k\|^2_{L^{\infty}(\mathbb R^2)} \right].
\]

On the other hand, by Proposition~\ref{second term}, we have that $\Gamma_{2,N} := \bar{\psi}_N-\tilde{\psi}_c$
is again harmonic in $\R^2\setminus K_{PM}$ and satisfies:
\begin{multline*}
\|\Gamma_{2,N}\|_{L^2(\mathcal{O}\cap \mathcal{F}_N)}\leq
C(\eps_0,p,\eta, \mathcal{O},R_{f}, K_{PM}, \K)M_{f} \\
\times \left[ \left( \dfrac{a}{d}\right)^{3-\eta} +
\|\mu -k\|^{ \frac{p(1-\eta)}{p+2}}_{W^{-1,p}(\mathbb R^2)}
 + \|\mu -k\|^{\frac12}_{W^{-1,p}(\mathbb R^2)} \right].
\end{multline*}

The theorem is finally proved.

\section{The homogenized Euler equations}\label{sec-Euler}

We split this section in two parts. The first subsection concerns the well-posedness result for the homogenized Euler equations \eqref{Eulerc}. The second subsection concerns the proof of the second statement of Theorem~\ref{main-Euler}. In this section, the subscript $c$ on functional set, as $C^1_{c}(U)$ or $W^{k,p}_{c}(U)$, means that the functions have a compact support in $U$.

\subsection{Well-posedness of the modified Euler equations} \label{Sect-WP}

We begin this section by recalling the main result of Proposition~\ref{uc-existence}: for any $\varepsilon_{0}\leq \varepsilon_{c}(4),$ $k\in \mathcal{FV}(\varepsilon_{0})$ and $f\in L^\infty_{c}(\R^2)$, we have a unique (up to a constant) solution $\psi_{c}\in \dot W^{1,4}(\R^2)$ of \eqref{def-psic}.

\medskip

Of course, to prove the well-posedness of strong solution to \eqref{Eulerc}, we need more regularity concerning $u_{c}:=\nabla^\perp \psi_{c}$. 
As the main theorem holds true only for $\omega_{c}$ in the exterior of the porous medium, it is enough to get a well-posedness before the vorticity $\omega_{c}$ reaches the support of $k$. Hence, instead to use sophisticated arguments concerning differential operators in divergence form, see for instance the monograph of Auscher and Tchamitchian \cite{Auscher}, the following lemma will be enough for our purpose.

\begin{lemma}\label{lem-regellip2}
Let $\varepsilon_{0}\leq \varepsilon_{c}(4)$. For any $\delta >0$, there exists $C(\delta)>0$ depending only on $\delta$ such that the following holds true. For all $k\in \mathcal{FV}(\varepsilon_{0})$ and $f$ a bounded function compactly supported in $\R^2$, $\nabla \psi_{c}$ is continuous on 
\[
\mathcal{F}_{\delta} := \Big\{ x\in \R^2, {\rm dist}(x, K_{PM} )> \delta \Big\}
\]
 and 
\[
 \| \nabla \psi_{c} \|_{L^\infty(\mathcal{F}_{\delta} )} \leq C(\delta) \| f \|_{L^1\cap L^\infty(\R^2)}.
 \]

Moreover, if $f\in W^{1,\infty}_{c}(\R^2)$, then $\nabla \psi_{c}$ belongs to $C^1(\mathcal{F}_{\delta} )$ and
 \[
 \| \nabla^2 \psi_{c} \|_{L^\infty(\mathcal{F}_{\delta} )} \leq C(\delta) \Big(1+ \| f \|_{L^1\cap L^\infty(\R^2)}+\| f \|_{ L^\infty(\R^2)} \ln (1 + \| \nabla f \|_{L^\infty(\R^2)})\Big).
 \]
\end{lemma}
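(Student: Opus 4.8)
The plan is to prove Lemma~\ref{lem-regellip2} by writing $\psi_c$ as a perturbation of the Newtonian potential $\psi_0=\Delta^{-1}f$ and exploiting that the perturbation term is smooth far from $K_{PM}$. Recall from \eqref{def-psic-2} that $\Delta \psi_c = f - \div(kM_{\K}\nabla\psi_c)$, hence
\[
\psi_c = \psi_0 - \Delta^{-1}\div(kM_{\K}\nabla\psi_c) =: \psi_0 + \rho,
\]
where $\rho := -\Delta^{-1}\div(kM_{\K}\nabla\psi_c)$. First I would handle $\psi_0$: its gradient is bounded on all of $\R^2$ by \eqref{psi0-bd}, and if $f\in W^{1,\infty}_c$ then $\nabla^2\psi_0$ enjoys the classical almost-Lipschitz/log bound (this is exactly the standard estimate $\|\nabla^2\psi_0\|_{L^\infty}\leq C(1+\|f\|_{L^1\cap L^\infty}+\|f\|_{L^\infty}\ln(1+\|\nabla f\|_{L^\infty}))$ used throughout 2D Euler theory, see \cite{MajdaBertozzi,MarPul}). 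So the contribution of $\psi_0$ to both claimed bounds is immediate and holds even on the support of $K_{PM}$.

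The heart of the matter is the corrector $\rho$, which must be estimated only on $\mathcal{F}_\delta$. The key structural fact is that $kM_{\K}\nabla\psi_c$ is supported in $K_{PM}$ (since $\supp k\subset K_{PM}$), so for $x\in\mathcal{F}_\delta$ we have, after one integration by parts,
\[
\nabla\rho(x) = -\nabla\,\Delta^{-1}\div(kM_{\K}\nabla\psi_c)(x)
= \int_{K_{PM}} \nabla_x \nabla_x^\perp\!\Big(\tfrac{1}{2\pi}\ln|x-y|\Big)\cdot\big(kM_{\K}\nabla\psi_c\big)(y)\,\d y,
\]
where the kernel $\nabla_x\nabla_x\ln|x-y|$ is smooth and bounded by $C/\dist(x,K_{PM})^2\leq C/\delta^2$ uniformly for $x\in\mathcal{F}_\delta$, $y\in K_{PM}$. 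Therefore
\[
\|\nabla\rho\|_{L^\infty(\mathcal{F}_\delta)} \leq \frac{C}{\delta^2}\,\|kM_{\K}\nabla\psi_c\|_{L^1(\R^2)} \leq \frac{C(\delta)}{\delta^2}\,\|k\|_{L^\infty}\,\|\nabla\psi_c\|_{L^1(K_{PM})}.
\]
By Proposition~\ref{uc-existence} applied with some $q\in(2,\infty)$ (say $q=4$, legitimate since $\varepsilon_0\leq\varepsilon_c(4)$) and Hölder on the bounded set $K_{PM}$, we bound $\|\nabla\psi_c\|_{L^1(K_{PM})}\leq C(K_{PM})\|\nabla\psi_c\|_{L^q}\leq C(q,K_{PM})\|f\|_{L^1\cap L^\infty}$, which yields the first assertion (the factor $\|k\|_{L^\infty}\leq\varepsilon_0^2$ is harmlessly absorbed). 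For the second-derivative bound, since the kernel can be differentiated once more with the improved estimate $|\nabla_x^3\ln|x-y||\leq C/\dist(x,K_{PM})^3\leq C/\delta^3$ on $\mathcal{F}_\delta$, the same computation gives $\|\nabla^2\rho\|_{L^\infty(\mathcal{F}_\delta)}\leq C(\delta)\|k\|_{L^\infty}\|\nabla\psi_c\|_{L^1(K_{PM})}\leq C(\delta)\|f\|_{L^1\cap L^\infty}$, and continuity of $\nabla\rho$ and $\nabla^2\rho$ on $\mathcal{F}_\delta$ follows from dominated convergence / differentiation under the integral sign since the kernels are smooth there. Adding the $\psi_0$ contributions completes the proof.

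The main technical point — rather than a genuine obstacle — is simply to be careful that one really only needs $L^1$-control of $\nabla\psi_c$ on the \emph{fixed bounded} set $K_{PM}$, which is exactly what Proposition~\ref{uc-existence} (an $\dot W^{1,q}$ estimate on the whole plane) delivers via Hölder, so no elliptic regularity theory for operators in divergence form is required. One should also note that all $\delta$-dependence is explicit and harmless: the constants blow up as $\delta\to 0$ (through the $\delta^{-2}$, $\delta^{-3}$ factors and $\dist(x,K_{PM})^{-k}$ bounds on the kernels) but are uniform in $k\in\mathcal{FV}(\varepsilon_0)$ and in $f$ with the stated norms. Finally, the logarithmic term in $\|\nabla f\|_{L^\infty}$ appears solely through $\nabla^2\psi_0$; the corrector $\rho$ contributes no such term because its kernel on $\mathcal{F}_\delta$ is genuinely smooth, so no Lipschitz-endpoint issue arises there.
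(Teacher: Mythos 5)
Your argument is correct and is essentially the paper's proof: decompose $\psi_c=\psi_0+\rho$ with $\rho=\psi_c-\psi_0=-\Delta^{-1}\div(kM_{\K}\nabla\psi_c)$, handle $\psi_0$ by the classical Biot--Savart/log-Lipschitz estimates, and control $\rho$ on $\mathcal F_\delta$ using the global $\dot W^{1,4}$ bound of Proposition~\ref{uc-existence}. The paper notes that $\rho$ is harmonic on $\R^2\setminus K_{PM}$ and invokes the mean-value/interior estimate on balls $B(x,\delta/2)\subset\mathcal F_{\delta/2}$, whereas you differentiate the convolution kernel directly and use $|\nabla_x^m\ln|x-y||\lesssim\delta^{-m}$ for $x\in\mathcal F_\delta$, $y\in K_{PM}$; these are two implementations of the same local smoothing estimate, and both pass through the same $L^4$ bound on $\nabla\psi_c$ (or $\nabla\psi_c-\nabla\psi_0$), so the proofs coincide. (Minor typo aside: ``$\nabla_x\nabla_x^\perp$'' in your kernel should just read $\nabla_x\otimes\nabla_x$ acting on $\tfrac{1}{2\pi}\ln|x-y|$ after the integration by parts.)
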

\begin{proof}
As $\supp k\subset K_{PM}$, we simply notice that 
 \[
 \tilde \psi := \psi_{c}-\psi_{0}
 \]
 is harmonic on $\mathcal{F}_{\delta/2}$, hence by the mean-value theorem
 \[
 \| \nabla \tilde \psi \|_{W^{1,\infty}(\mathcal{F}_{\delta} )} \leq C(\delta) \| \nabla \psi_{c}- \nabla\psi_{0} \|_{L^4(\mathcal{F}_{\delta/2})}\leq C(\delta) \|f \|_{L^1\cap L^\infty (\R^2)} ,
 \]
 provided $\varepsilon_{0}\leq \varepsilon_{c}(4)$ (see Proposition~\ref{uc-existence} with $q=4$).
 
 Therefore, the conclusion of this lemma follows directly from the standard estimates of $\psi_{0}=\Delta^{-1} f$.
\end{proof}

We are now in position to adapt the classical proof for the Euler equations to get the well-posedness of \eqref{Eulerc}. The main idea is to introduce the characteristic curve along the flow and to use an iteration procedure based on the wellposedness of the linear transport equation. We refer to Marchioro and Pulvirenti \cite{MarPul} for this type of construction.
As this proof is related to classical arguments, we only summarize here the procedure, and we refer to \cite{ADL} (proof of Theorem 2.2. in Section 7.1) where all the details are included in the context of $C^1$ solutions.

 We fix an initial data $\omega_{0}\in C^1_{c} (\R^2\setminus K_{PM})$. For any $\delta >0$ and $R_{f}>0$, we introduce now the subspace $C_{\omega_0,\delta,R_{f}}\subset C_c^1([0,t_\delta]\times \R^2)$ as follows: a vortex density $\omega \in C_c^1([0,t_\delta]\times \R^2)$ belongs to $C_{\omega_0,\delta,R_{f}}$ if and only if
\begin{itemize}
\item $\|\omega \|_{L^1(\R^2)}=\|\omega_0\|_{L^1(\R^2)}$ and $\|\omega\|_{L^\infty(\R^2)}=\|\omega_0\|_{L^\infty(\R^2)}$, for every $t\in [0,t_\delta]$,
\item $\omega(0,x)=\omega_0(x)$, for every $x\in\R^2$,
\item $\supp \omega(t,\cdot)\subset \mathcal{F}_{\delta}\cap B(0,R_{f})$, for every $t\in [0,t_\delta]$.
\end{itemize}
Of course, we have to consider $\delta <{\rm dist}(\supp \omega_{0},K_{PM})$ and $R_{f}>{\rm diam}( \supp \omega_{0}):=\sup_{x\in \supp \omega_{0}}{|x|}$. The subspace $C_{\omega_0,\delta,R_{f}}$ inherits its topology from the metric of $C_c^1\left([0,t_\delta]\times\R^2\right)$.

For any function $\omega\in C_{\omega_{0},\delta,R_{f}}$, we conclude from Lemma~\ref{lem-regellip2} that the velocity $u:=\nabla^\perp \psi_{c}[\omega]$ associated to $\omega$ is uniformly bounded in $\mathcal{F}_{\delta}$ by $C(\delta,R_{f})\|\omega_{0}\|_{L^1\cap L^\infty}$. Therefore, any trajectory starting from $\supp \omega_{0}$ along the flow associated to $\nabla^\perp \psi_{c}[\omega]$ stays in $\mathcal{F}_{\delta}\cap B(0,R_{f})$ at least until
\[
 t_{\delta} := \min \Big(\frac{{\rm dist}(\supp \omega_{0},K_{PM} )- \delta}{C(\delta,R_{f})\|\omega_{0}\|_{L^1\cap L^\infty}}, \frac{R_{f}-{\rm diam}( \supp \omega_{0})}{C(\delta,R_{f})\|\omega_{0}\|_{L^1\cap L^\infty}} \Big).
\]
The main idea is to prove the well-poseness result on $[0, t_{\delta}]$, following the usual scheme.

First, we build an approximating sequence $(\omega_{n} )_{n\in \N}$ using a standard iteration procedure based on the wellposedness of the linear transport equation.

The first term is simply given by the constant function $\omega_0(t,x)=\omega_0(x)$, for all $(t,x)\in [0,t_\delta]\times \R^2$. Then, for each $\omega_n\in C_{\omega_0,\delta,R_{f}}$, the following term $\omega_{n+1}\in C_{\omega_0,\delta,R_{f}}$ is defined as the unique solution to the linear transport equation
\[
\left\{
\begin{aligned}
& \partial_{t} \omega_{n+1} + u_n\cdot \nabla \omega_{n+1} =0,\\
& \omega_{n+1}(t=0) = \omega_0,
\end{aligned}
\right.
\]
where the velocity flow $u_{n}$ is given by
\[
u_n= \nabla^{\perp} \psi_{c} \quad \text{where}\quad \div [({\rm I}_{2}+kM_{\K}) \nabla \psi_c] = \omega_{n}.
\]
Indeed, as $u_{n}$ is lipschitz on $\mathcal{F}_{\delta}$, for any $x\in \supp \omega_{0}$ there exists a unique characteristic curve $X_{n}(\cdot,x)\in C^1([0,t_1] ; \mathcal{F}_{\delta})$, i.e.\ the curve solving the differential equation
\[
\frac{\d X_n(s,x)}{\d s}=u_n(s,X_n(x,s)), \quad X_{n}(0,x)=x.
\]
In view of the definition of $t_{\delta}$, we can choose $t_{1}=t_{\delta}$. For any fixed $t\in [0,t_{\delta}]$, the mapping $x\mapsto X_{n}(t,x)$ is a $C^1$ diffeomorphism from $\supp \omega_{0}$ onto its image, preserving the Lebesgue measure. Its inverse $X_{n}(t,\cdot)^{-1}$ allows us to define the new vortex density as
\begin{equation}\label{def-omega-n}
\left\{
\begin{array}{ll}
 \omega_{n+1}(t,x) = \omega_{0}( X_{n}(t,\cdot)^{-1}(x)) , & \text{if } x\in X_{n}(t,\supp \omega_{0}),\\
 \omega_{n+1}(t,x) =0,& \text{otherwise,} 
\end{array}\right.
\end{equation}
which belongs to $C_{\omega_{0},\delta,R_{f}}$.

Second, we establish uniform $C^1$-bounds on this approximating sequence. To this end, we write the equation verified by $\partial_{x_i}\omega_{n+1}$ and we prove by using the flow map $X_{n}$ associated to $u_{n}$ that for any $[a,b]\subset [0,t_{\delta}]$ such that $b-a$ is small enough
\[
 \|\omega_{n+1}\|_{C^1([a,b]\times\R^2)} 
 \leq C_{0}\Big(1+\sup_{p\geq 0}\|\nabla\omega_{p}(a,\cdot)\|_{L^\infty(\R^2)}^2 \Big),
\]
 where $C_0>0$ may only depend on $\|\omega_0\|_{L^1\cap L^\infty(\R^2)}$, $k$, $\delta$, $R_{f}$ but is independent of $\omega_n$, $\omega_{n+1}$ and $[a,b]$. 
 
We deduce that we may propagate the preceding $C^1$-bound on $[a,b]$ to the whole interval $[0,t_\delta]$. This yields a uniform bound
\begin{equation}\label{C1 bound}
\sup_{n\geq 0} \|\omega_{n}\|_{C^1([0,t_\delta]\times\R^2)}<\infty.
\end{equation}

Next, we show that $(\omega_{n} )_{n\in \N}$ is actually a Cauchy sequence 
 in $C^0([0,t_\delta]\times\R^2)$ and, therefore, there exists $\omega_{c}\in C([0,t_\delta]\times\R^2)$ such that
\begin{equation}\label{convergence xi mu}
\begin{aligned}
\omega_n & \longrightarrow \omega_{c}
\quad\text{in }L^\infty([0,t_\delta]\times\R^2),
\\
u_n & \longrightarrow u_{c}
\quad\text{in }L^\infty([0,t_\delta]\times \mathcal{F}_{\delta}),
\end{aligned}
\end{equation}
where $u_{c}$ is defined by $\nabla^\perp \psi_{c}[\omega]$ and we have used Lemma~\ref{lem-regellip2} to derive the convergence of $u_n$ from that of $\omega_n$. 
It is then readily seen that $\omega_{c}$ solves \eqref{Eulerc} in the sense of distributions.

In order to complete the proof of wellposedness in $C^1([0,t_{\delta}])$, there only remains to show that $\omega_{c}$ is actually of class $C^1$. Indeed, the uniqueness of solutions will then easily ensue from an estimate similar to the previous step. 

Using \eqref{convergence xi mu}, we can show that the characteristic curve $X_n$ associated to $u_{n}$ converges uniformly in $(t,x)\in [0,t_\delta]\times\supp\omega_0$ towards $X$ the characteristic curve associated to $u_{c}$.

 By the uniform convergences of $\omega_{n}$ to $\omega$ and $X_{n}$ to $X$, we conclude from \eqref{def-omega-n} and \eqref{C1 bound} that
\begin{equation*}
\left\{
\begin{aligned}
\omega_{c}(t,x) & = \omega_0(X^{-1}(t,x)), && \text{if } x\in X(t,\supp\omega_0),
\\
\omega_{c}(t,x) & = 0, && \text{otherwise},
\end{aligned}
\right.
\end{equation*}
which establishes that $\omega_{c} \in C^1_c([0,t_\delta]\times\R^2)$.

If $\dist(\supp \omega_{c}(t_{\delta},\cdot), \mathcal{F}_{\delta}^c\cup B(0,R_{f})^c >0)$, we iterate our construction until we get the well-posedeness on $[0,T_{k}]$ where $\dist(\supp \omega_{c}(T_{k},\cdot), \mathcal{F}_{\delta}^c\cup B(0,R_{f})^c)=0$ (or $T_{k}=+\infty$). 

Considering a sequence $R_{f}$ which tends to infinity, this ends the proof of the first statement in Theorem~\ref{main-Euler}, provided $\varepsilon_{0}\leq \varepsilon_{c}(4)$ (due to Lemma~\ref{lem-regellip2}).

Actually, we can even prove a strongest version: considering a sequence $\delta_{n}\to 0$, we get the well-posedeness on $[0,T_{*})$ such that $\dist(\supp \omega_{c}(t,\cdot), K_{PM})>0$ for any $t\in [0,T_{*})$, where $T_*=+\infty$ or $\dist(\supp \omega_{c}(t_{\delta},\cdot), K_{PM}) \to 0$ when $t\to T_*<+\infty$.

\subsection{Stability estimate}

This subsection is dedicated to the proof of the second statement of Theorem~\ref{main-Euler}. So let us consider $T\in (0,T_{k}]$, $\eta \in (0,1)$, $p\in (1,2)$ given. If $T_{k}<+\infty$, we can choose $T=T_{k}$. We are looking for restrictions on $\varepsilon_{0}$ such that the second statement holds true. For sure, we consider $\varepsilon_{0}\leq \tilde \varepsilon_{0}$ where $\tilde \varepsilon_{0}$ is the quantity $\varepsilon_{0}$ appearing in Theorem~\ref{main-elliptic}.

In the previous subsection, we note that the unique solution $\omega_{c}$ of \eqref{Eulerc} is such that 
\[
\| \omega_{c} (t,\cdot) \|_{L^\infty(\R^2)} = \| \omega_{0} \|_{L^\infty(\R^2)}, \quad \| \omega_{c} (t,\cdot) \|_{L^1(\R^2)} = \| \omega_{0} \|_{L^1(\R^2)} , \quad\supp \omega_{c} (t,\cdot) \subset \overline{\mathcal{F}_{\delta}} \quad\text{ for all }t\in [0;T],
\]
where we recall that $\mathcal{F}_{\delta}$ is defined in Lemma~\ref{lem-regellip2}. This lemma states that $\nabla \psi_{c}$ is uniformly bounded in $\mathcal{F}_{\delta}$ by $C(\delta)\|\omega_{0}\|_{L^1\cap L^\infty}$, independently of $k$ (provided $\varepsilon_{0}\leq \varepsilon_{c}(4)$). Hence there exists $R_{T}>0$ independent of $k$ such that
\[
\supp \omega_{c} (t,\cdot) \subset K_{T} \text{ for all }t\in [0;T], \text{ with } K_{T}:=\overline{B(0,R_{T})} \cap \overline{\mathcal{F}_{\delta}} .
\]
We also introduce a Jordan domain $\mathcal{O}_{T}$ such that $K_{T}\subset \mathcal{O}_{T} \subset \mathcal{F}_{\delta/2}$ and ${\rm dist}(K_{T},\partial \mathcal{O}_{T})\geq \delta/2$.

We finally set $M_{f}=\| \omega_{0} \|_{L^\infty(\R^2)}$ and $R_{f}$ large enough such that $\overline{\mathcal{O}_{T}}\subset B(0,R_{f})$. Hence, $M_{f}$ and $R_{f}$ are independent of $k$.

For any $f\in C^1_{c}(\R^2)$, we note $\psi_{c}[f]$ and $\psi_{N}[f]$ respectively the solution of \eqref{def-psic} and \eqref{def-psiN}, then we derive the following corollary from Theorem~\ref{main-elliptic}.
\begin{corollary}\label{cor-elliptic}
 There exists $C$ such that for any $\varepsilon_{0}\leq \tilde \varepsilon_{0}$ (where $\tilde \varepsilon_{0}$ is the quantity $\varepsilon_{0}$ appearing in Theorem~\ref{main-elliptic}), any $k \in \mathcal{FV}(\varepsilon_0)$, any $\mathcal{F}_{N}$ verifying \eqref{main_assumption} and any $f$ compactly supported in $B(0,R_{f})$ which is bounded by $M_{f}$, then 
\[ 
 \| \nabla \psi_{N}[f]- \nabla \psi_{c}[f] \|_{W^{1,\infty}(\mathcal{O}_{T})} 
 \leq 
 C
\left[ \left( \dfrac{a}{d}\right)^{3-\eta} +
\|\mu -k\|^{ \frac{p(1-\eta)}{p+2}}_{W^{-1,p}(\mathbb R^2)}
 + \|\mu -k\|^{\frac12}_{W^{-1,p}(\mathbb R^2)} + \|k\|_{L^{\infty}(\mathbb R^2)}^2
 \right].
 \]
\end{corollary}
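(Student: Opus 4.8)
The plan is to deduce the corollary directly from Theorem~\ref{main-elliptic} combined with interior estimates for harmonic functions. Recall that $\mathcal{O}_{T}$ was built so that $\overline{\mathcal{O}_{T}}\subset\mathcal{F}_{\delta/2}$, i.e. $\dist(\mathcal{O}_{T},K_{PM})\geq\delta/2>0$. In particular $\mathcal{O}_{T}$ is disjoint from all the holes $\K_{\ell}^{a}$ (which lie inside $K_{PM}$), so that $\mathcal{O}_{T}\cap\mathcal{F}_{N}=\mathcal{O}_{T}$, and more importantly $\mathcal{O}_{T}$ sits in the region $\R^{2}\setminus K_{PM}$ where the correctors of Theorem~\ref{main-elliptic} are harmonic. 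First I would fix an auxiliary bounded open set $\mathcal{O}'$ with $\overline{\mathcal{O}_{T}}\subset\mathcal{O}'$ and $\overline{\mathcal{O}'}\subset\mathcal{F}_{\delta/4}$, so that $\mathcal{O}'$ is likewise at a fixed positive distance from $K_{PM}$.

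Next I would apply Theorem~\ref{main-elliptic} with this choice $\mathcal{O}=\mathcal{O}'$, the given $k\in\mathcal{FV}(\varepsilon_{0})$ and $\mathcal{F}_{N}$, the parameters $\eta,p$ fixed at the start of the subsection, and the source term $f$ (which, as needed to invoke the theorem, is bounded by $M_{f}$ and supported in $B(0,R_{f})\cap\overline{\mathcal{F}_{N}}$). This produces the splitting $\psi_{N}[f]=\psi_{c}[f]+\Gamma_{1,N}+\Gamma_{2,N}$ with $\Delta\Gamma_{j,N}=0$ on $\R^{2}\setminus K_{PM}\supset\mathcal{O}'$ and
\[
\|\nabla\Gamma_{1,N}\|_{L^{2}(\mathcal{O}')}+\|\Gamma_{2,N}\|_{L^{2}(\mathcal{O}')}\leq C\Big[\big(\tfrac{a}{d}\big)^{3-\eta}+\|\mu-k\|_{W^{-1,p}(\R^{2})}^{\frac{p(1-\eta)}{p+2}}+\|\mu-k\|_{W^{-1,p}(\R^{2})}^{1/2}+\|k\|_{L^{\infty}(\R^{2})}^{2}\Big],
\]
where $C$ depends only on $\mathcal{O}',\eta,p,\varepsilon_{0},R_{f},M_{f},K_{PM},\K$, hence ultimately only on $T,\eta,p$ and the fixed data, and in particular not on $k$, $N$ or $f$.

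Then I would upgrade these $L^{2}(\mathcal{O}')$ bounds to $W^{1,\infty}(\mathcal{O}_{T})$ bounds using harmonicity. Since $\Gamma_{1,N},\Gamma_{2,N}$ and all their partial derivatives are harmonic on $\mathcal{O}'$, the Cauchy estimates coming from the mean value property give, for every multi-index $\alpha$, a constant $C(\alpha,\mathcal{O}_{T},\mathcal{O}')$ with $\|\partial^{\alpha}h\|_{L^{\infty}(\mathcal{O}_{T})}\leq C\|h\|_{L^{2}(\mathcal{O}')}$ for any $h$ harmonic on $\mathcal{O}'$. Applying this to each component of $\nabla\Gamma_{1,N}$ (harmonic on $\mathcal{O}'$, with $L^{2}(\mathcal{O}')$ norm already controlled) yields $\|\nabla\Gamma_{1,N}\|_{W^{1,\infty}(\mathcal{O}_{T})}\leq C\|\nabla\Gamma_{1,N}\|_{L^{2}(\mathcal{O}')}$, and applying it to $\Gamma_{2,N}$ itself yields $\|\nabla\Gamma_{2,N}\|_{W^{1,\infty}(\mathcal{O}_{T})}\leq C\|\Gamma_{2,N}\|_{L^{2}(\mathcal{O}')}$; note that we never need to control $\Gamma_{1,N}$ itself, only its gradient, which matches exactly what Theorem~\ref{main-elliptic} provides. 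Since $\nabla\psi_{N}[f]-\nabla\psi_{c}[f]=\nabla\Gamma_{1,N}+\nabla\Gamma_{2,N}$ on $\mathcal{O}_{T}$, summing the two inequalities and inserting the bound above finishes the proof.

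There is no genuine difficulty here: the corollary is essentially a packaging of Theorem~\ref{main-elliptic} for the Euler application. The only points demanding a little care are (i) verifying that $\mathcal{O}_{T}$ and the intermediate set $\mathcal{O}'$ stay at a fixed positive distance from $K_{PM}$ — guaranteed by the construction $\mathcal{O}_{T}\subset\mathcal{F}_{\delta/2}$ recalled just before — so that the correctors are truly harmonic on $\mathcal{O}'$; and (ii) bookkeeping the constants, so as to confirm that the final $C$ depends only on $T,\eta,p$ and the fixed geometric data and is uniform over $k\in\mathcal{FV}(\varepsilon_{0})$, over $\mathcal{F}_{N}$ satisfying \eqref{main_assumption}, and over admissible $f$ — which is immediate because $R_{f},M_{f},\delta,R_{T},\mathcal{O}_{T},\mathcal{O}'$ were all fixed independently of these objects.
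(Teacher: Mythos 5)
Your proof is correct and matches the paper's (very terse) argument: decompose $\psi_N-\psi_c=\Gamma_{1,N}+\Gamma_{2,N}$ via Theorem~\ref{main-elliptic} and use the harmonicity of the $\Gamma_{j,N}$ on $\R^2\setminus K_{PM}$ together with interior (mean-value) estimates on balls $B(x,\delta/2)\subset\R^2\setminus K_{PM}$, $x\in\mathcal{O}_T$, to convert the $L^2$ bounds into the $W^{1,\infty}(\mathcal{O}_T)$ bound. Your explicit intermediate set $\mathcal{O}'$ is just making precise what the paper leaves implicit when invoking the mean-value theorem, and the bookkeeping of constants is the same.
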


The proof comes directly from the mean value theorem (and the harmonicity of $\Gamma_{j,N}$), because $B(x,\delta/2)\subset \R^2 \setminus K_{PM}\subset \mathcal{F}_{N}$ for all $x\in \mathcal{O}_{T}$.

Next, we define $T_{N}\in (0, T]$ such that $\omega_{N}$ stays compactly supported in $\overline{\mathcal{O}_{T}}$:
\[
 T_{N}:= \sup_{T_{*}\in [0,T]} \Big\{ T_{*}, \ \supp \omega_{N}(t,\cdot)\subset \overline{\mathcal{O}_{T}} \ \forall t\in [0,T_{*}] \Big\}.
\]
As the vorticity is transported by a continuous vector field $u_{N}$, we state that $T_{N}>0$ and that there are only two possibilities:
\begin{enumerate}
 \item[(i)] $T_{N}=T$, hence $\supp \omega_{N}(t,\cdot)\subset \overline{\mathcal{O}_{T}}$ for all $t\in [0,T]$ ;
 \item[(ii)] $T_{N}<T$, hence $\overline{\supp \omega_{N}(T_{N},\cdot)} \cap \partial \mathcal{O}_{T} \neq \emptyset$. \label{pageTN}
\end{enumerate}
In the sequel of this section, we will derive uniform estimates for all $t\in [0,T_{N}]$ (where the support of $\omega_{N}$ is included in $\overline{\mathcal{O}_{T}}$), and we will conclude by a bootstrap argument that (ii) cannot happen if $\varepsilon_{0}$ is chosen small enough, which will imply that the estimates hold true on $[0,T]$.

From the solution $(u_{c},\omega_{c})$, we can define the trajectories $(t,x)\mapsto X_{c}(t,x)$ on $\R^+\times \supp \omega_{0}$ by
\begin{equation}\label{def-Xc}
 \left\{
\begin{array}{l}
 \frac{\partial X_c}{\partial t}(t,x) = u_{c}(t,X_{c}(t,x)), \\
 X_{c}(0,x)=x,
\end{array}
 \right.
\end{equation}
and we recall that the vorticity is constant along the trajectories: $\omega_{c}(t,X_{c}(t,x))=\omega_{0}(x)$. Hence, for any $x\in \supp \omega_{0}$, $X_{c}(t,x)\in K_{T}$ for all $t\in [0,T]$. 
In the same way, we define the trajectories associated to $(u_{N},\omega_{N})$ on $\R^+\times \mathcal{F}_{N}$ by
\begin{equation}\label{def-XN}
 \left\{
\begin{array}{l}
 \frac{\partial X_{N}}{\partial t}(t,x) = u_{N}(t,X_{N}(t,x)), \\
 X_{N}(0,x)=x,
\end{array}
 \right.
\end{equation}
along of which $\omega_{N}$ is constant, and $X_{N}(t,x)\in \overline{\mathcal{O}_{T}}$ for all $(t,x)\in [0,T_{N}]\times \supp \omega_{0}$.

\subsubsection{Stability estimate for velocities}\label{sec-stab-vel}
The first step of our proof is to derive a uniform estimate of $u_{c}-u_{N}$ in $[0,T_{N}]\times \mathcal{O}_{T}$. Introducing the vector field $\check u_{N}:= \nabla^\perp \psi_{c}[\omega_{N}]$ and as $\omega_{N}(t,\cdot)$ is compactly supported in $B(0,R_{f})$ with $\|\omega_{N}(t,\cdot)\|_{L^\infty}=M_{f}$, Corollary~\ref{cor-elliptic} states
 \[
 \| (\check u_{N} - u_{N})(t,\cdot) \|_{L^{\infty}(\mathcal{O}_{T})} \leq C
\left[ \left( \dfrac{a}{d}\right)^{3-\eta} +
\|\mu -k\|^{ \frac{p(1-\eta)}{p+2}}_{W^{-1,p}(\mathbb R^2)}
 + \|\mu -k\|^{\frac12}_{W^{-1,p}(\mathbb R^2)} + \|k\|_{L^{\infty}(\mathbb R^2)}^2
 \right], 
 \]
for all $t\in [0,T_{N}]$. For shortness, we denote in the sequel of the proof:
\[
F(N,k):=\left( \dfrac{a}{d}\right)^{3-\eta} +
\|\mu -k\|^{ \frac{p(1-\eta)}{p+2}}_{W^{-1,p}(\mathbb R^2)}
 + \|\mu -k\|^{\frac12}_{W^{-1,p}(\mathbb R^2)} + \|k\|_{L^{\infty}(\mathbb R^2)}^2.
\] 
The second part can be estimated by Lemma~\ref{lem-regellip2}:
\begin{align*}
 \| (u_{c} - \check u_{N} (t,\cdot) \|_{L^{\infty}(\mathcal{O}_{T})}& \leq \| \nabla \psi_{c}[\omega_{c}-\omega_{N}] (t,\cdot) \|_{L^{\infty}(\mathcal{F}_{\delta/2})} \leq C(\delta/2) \|( \omega_{c}-\omega_{N})(t,\cdot) \|_{L^1\cap L^\infty(\R^2)} \\
 &\leq C \|( \omega_{c}-\omega_{N})(t,\cdot)\|_{L^\infty(\R^2)}, \qquad \forall t\in [0,T_{N}],
\end{align*}
because $\omega_{c}$ and $\omega_{N}$ are supported in $B(0,R_{f})$.

Putting together these two estimates, we conclude that
\begin{equation}\label{stab-vel}
 \| (u_{c} - u_{N})(t,\cdot) \|_{L^{\infty}(\mathcal{O}_{T})} \leq C\Big[ F(N,k) + \|( \omega_{c}-\omega_{N})(t,\cdot)\|_{L^\infty(\R^2)}\Big], \qquad \forall t\in [0,T_{N}].
 \end{equation}

Moreover, thanks to the second estimate of Lemma~\ref{lem-regellip2}, we state that
\begin{align*}
\| \nabla \check u_{N}(t,\cdot) \|_{L^\infty(\mathcal{O}_{T})}& \leq C \Big(1+ \| \omega_{N}(t,\cdot) \|_{L^1\cap L^\infty(\R^2)} + \| \omega_{N}(t,\cdot) \|_{ L^\infty(\R^2)} \ln(1+ \| \nabla \omega_{N}(t,\cdot) \|_{L^\infty(\R^2)})\Big)\\
&\leq C(\delta,\omega_{0}) \ln(2+ \| \nabla \omega_{N}(t,\cdot) \|_{L^\infty(\R^2)}) 
\end{align*}
so Corollary~\ref{cor-elliptic} implies that there exists $C$ such that
\begin{equation}\label{est-vel}
 \| \nabla u_{N}(t,\cdot) \|_{L^{\infty}(\mathcal{O}_{T})} \leq C \ln(2+ \| \nabla \omega_{N}(t,\cdot) \|_{L^\infty(\R^2)}) , \qquad \forall t\in [0,T_{N}].
\end{equation}

\subsubsection{Uniform $C^1$ estimates for vorticities}

Differentiating the vorticity equation, we get for $i=1,2$:
\[
\partial_{t} \partial_{x_{i}} \omega_{N} + u_{N}\cdot \nabla \partial_{x_{i}} \omega_{N}=-\partial_{x_{i}} u_{N} \cdot \nabla \omega_{N},
\]
hence
\[
\partial_{x_{i}} \omega_{N}(t,X_{N}(t,x)) = \partial_{x_{i}} \omega_{0}(x) -\int_{0}^t (\partial_{x_{i}} u_{N} \cdot \nabla \omega_{N})(s,X_{N}(s,x))\d s.
\]
As $X_{N}(t,x)\in \overline{\mathcal{O}_{T}}$ for all $(t,x)\in [0,T_{N}]\times \supp \omega_{0}$ and as we know that $\| \nabla u_{N} \|_{L^\infty([0,T_{N}]\times \mathcal{O}_{T})}$ is bounded (see \eqref{est-vel}), we get that
\[
\| \nabla \omega_{N}(t,\cdot) \|_{L^\infty(\R^2)} \leq \| \nabla \omega_{0} \|_{L^\infty(\R^2)} + C \int_{0}^t \| \nabla \omega_{N}(s,\cdot) \|_{L^\infty(\R^2)}\ln(2+ \| \nabla \omega_{N}(s,\cdot) \|_{L^\infty(\R^2)}) \d s.
\]
Gronwall's lemma allows us to conclude the following estimate for the vorticity:
\begin{equation}\label{est-vort}
 \| \nabla \omega_{N}(t,\cdot) \|_{L^{\infty}(\R^2)} \leq C , \qquad \forall t\in [0,T_{N}].
\end{equation}

\subsubsection{Stability estimate for vorticities}

Subtracting the vorticity equations, we can write
\begin{gather*}
 \partial_{t}(\omega_{c}-\omega_{N}) + u_{c}\cdot \nabla (\omega_{c}-\omega_{N})=-(u_{c}-u_{N})\cdot \nabla \omega_{N},\\
 \partial_{t}(\omega_{c}-\omega_{N}) + u_{N}\cdot \nabla (\omega_{c}-\omega_{N})=-(u_{c}-u_{N})\cdot \nabla \omega_{c}
\end{gather*}
which imply that
\begin{gather*}
(\omega_{c}-\omega_{N})(t,X_{c}(t,x)) =- \int_{0}^t \Big( (u_{c}-u_{N})\cdot \nabla \omega_{N}\Big)(s,X_{c}(s,x))\d s,\\
(\omega_{c}-\omega_{N})(t,X_{N}(t,x)) =- \int_{0}^t \Big( (u_{c}-u_{N})\cdot \nabla \omega_{c}\Big)(s,X_{N}(s,x))\d s.
\end{gather*}
As the support of $(\omega_{c}-\omega_{N})(t,\cdot)$ is included in $X_{c}(t,\supp \omega_{0}) \cup X_{N}(t,\supp \omega_{0})$, we use \eqref{stab-vel} and \eqref{est-vort} to write
\[
 \| ( \omega_{c}-\omega_{N})(t,\cdot) \|_{L^{\infty}(\R^2)} \leq C\Big[ \Big( F(N,k) + \int_{0}^t \|( \omega_{c}-\omega_{N})(s,\cdot)\|_{L^\infty(\R^2)} \d s \Big], \qquad \forall t\in [0,T_{N}].
\]

Therefore, Gronwall's lemma gives
\begin{equation}\label{stab-vort}
 \| ( \omega_{c}-\omega_{N})(t,\cdot) \|_{L^{\infty}(\R^2)} \leq C F(N,k) , \qquad \forall t\in [0,T_{N}],
 \end{equation}
and \eqref{stab-vel} becomes
\begin{equation}\label{stab-vel2}
 \| (u_{c} - u_{N})(t,\cdot) \|_{L^{\infty}(\mathcal{O}_{T})} \leq C F(N,k) , \qquad \forall t\in [0,T_{N}].
 \end{equation}

\subsubsection{Stability estimate for trajectories}

From the definition of the trajectories \eqref{def-Xc}-\eqref{def-XN} and repeating the decomposition of Section~\ref{sec-stab-vel}, we compute
\begin{align*}
 \partial_{t} |(X_{N}-X_{c})(t,x) |^2 &\leq 2|(X_{N}-X_{c})(t,x) | \Big( |(u_{N}-u_{c})(t,X_{N}(t,x)) | + |u_{c} (t,X_{N}(t,x) ) -u_{c}(t,X_{c}(t,x)) | \Big)\\
 &\leq C|(X_{N}-X_{c})(t,x) | \Big( F(N,k) + |(X_{N}-X_{c})(t,x) | \Big), \qquad \forall t\in [0,T_{N}],
\end{align*}
where we have used \eqref{stab-vel2} and that $u_{c}\in C^1([0,T]\times \mathcal{O}_{T})$. We deduce again by Gronwall's lemma that 
\begin{equation}\label{stab-traj}
 | (X_{N}-X_{c})(t,x) | \leq C F(N,k) , \qquad \forall t\in [0,T_{N}],\ \forall x\in \supp \omega_{0}.
 \end{equation}

\subsubsection{Bootstrap argument and conclusion}

To summarize, for $\delta$ given, we choose $\varepsilon_{0}\leq \tilde \varepsilon_{0}$ such that
\begin{equation}\label{bootstrap-cond}
C F(N,k) < \delta/2 
\end{equation}
for any $k \in \mathcal{FV}(\varepsilon_0)$ and $\mathcal{F}_{N}$ verifying \eqref{main_assumption}, where $C$ is the constant appearing in \eqref{stab-traj}. We point out that $C$ depends on $\tilde\varepsilon_{0}$ but not on $\varepsilon_{0}$. As $\varepsilon_{0}\leq \tilde \varepsilon_{0}$, for any $k \in \mathcal{FV}(\varepsilon_0)$ and $\mathcal{F}_{N}$ verifying \eqref{main_assumption}, the estimates \eqref{stab-vort}-\eqref{stab-traj} are valid. As $X_{c}(t,x)\in K_{T}$ for all $(t,x)\in [0,T]\times \supp \omega_{0}$, we conclude from \eqref{stab-traj} and \eqref{bootstrap-cond} that the situation (ii) in Page \pageref{pageTN} is impossible. This allows us to conclude that $T_{N}=T$ and that \eqref{stab-vort}-\eqref{stab-traj} are valid for all $t\in [0,T]$.

In Section~\ref{sec-stab-vel}, replacing $\mathcal{O}_{T}$ by any bounded open set $\mathcal{O}\Subset \R^2\setminus K_{PM}$, and using \eqref{stab-vort}, we get easily that \eqref{stab-vel2} is valid if we replace $\mathcal{O}_{T}$ by $\mathcal{O}$. This ends the proof of Theorem~\ref{main-Euler}.

\medskip

\noindent
{\bf Acknowledgements.} The two first authors are partially supported by the Agence Nationale de la Recherche, project IFSMACS, grant ANR-15-CE40-0010 and Project SINGFLOWS, grant ANR-18-CE40-0027-01. 

\def\cprime{$'$}

\end{document}